\definecolor{citation}{rgb}{0.2,0.58,0.2} 
\definecolor{formula}{rgb}{0.1,0.2,0.6}
\definecolor{url}{rgb}{0.3,0,0.5}
\newtheorem{coro}{\bf Corollary}[section]
\newtheorem{theo}[coro]{\bf Theorem} 
\newtheorem{lem}[coro]{\bf Lemma}
\newtheorem{rem}[coro]{\bf Remark} 
\newtheorem{defi}[coro]{\bf Definition}
\newtheorem{prop}[coro]{\bf Proposition}
\newtheorem{theorem}{\bf Assumption} 
\newcommand{\settheoremtag}[1]{
  \let\oldthetheorem\thetheorem
  \renewcommand{\thetheorem}{#1}
  \g@addto@macro\endtheorem{
    \addtocounter{theorem}{-1}
    \global\let\thetheorem\oldthetheorem}
  }
\definecolor{darkgreen}{rgb}{0.00, 0.50, 0.00}
\DeclareMathOperator*{\osc}{osc}
\newcommand{\nocontentsline}[3]{}
\newcommand{\tocless}[2]{\bgroup\let\addcontentsline=\nocontentsline#1{#2}\egroup}
\def\tens#1{\pmb{\mathsf{#1}}}
\def\mean#1{\mathchoice%
          {\mathop{\kern 0.2em\vrule width 0.6em height 0.69678ex depth -0.58065ex
                  \kern -0.8em \intop}\nolimits_{\kern -0.4em#1}}%
          {\mathop{\kern 0.1em\vrule width 0.5em height 0.69678ex depth -0.60387ex
                  \kern -0.6em \intop}\nolimits_{#1}}%
          {\mathop{\kern 0.1em\vrule width 0.5em height 0.69678ex
              depth -0.60387ex
                  \kern -0.6em \intop}\nolimits_{#1}}%
          {\mathop{\kern 0.1em\vrule width 0.5em height 0.69678ex depth -0.60387ex
                  \kern -0.6em \intop}\nolimits_{#1}}}
\newcommand{\dv}{{\rm div}}
\newcommand{\opA}{{\mathcal{ A}}}
\newcommand{\bopA}{{\bar{\opA}}}
\newcommand{\wt}{\widetilde}
\newcommand{\ve}{\varepsilon}
\newcommand{\vp}{\varphi}
\newcommand{\vt}{\vartheta}
\newcommand{\vr}{\varrho}
\newcommand{\vk}{\varkappa}
\def\R{{\mathbb{R}}}
\def\rp{{[0,\infty)}}
\def\tew{{\tens{w}}}
\def\teeta{{\tens{\eta}}}
\def\texi{{\tens{\xi}}}
\def\teze{{\tens{\zeta}}}
\def\teu{{\tens{u}}}
\def\teW{{\tens{W}}}
\def\teh{{\tens{h}}}
\def\tebu{{\overline{\tens{u}}}}
\def\tev{{\tens{v}}}
\def\calV{{\mathcal{V}}}
\def\tebv{{\overline{\tens{{v}}}}}
\def\tevp{{\tens{\vp}}}
\def\tef{{\tens{f}}}
\def\teell{{\tens{\ell}}}
\def\sa{{s_{\rm app}}}
\def\cL{{\mathcal{L}}}
\def\dx{{\,\mathrm{d}x}}
\def\dy{{\,\mathrm{d}y}}
\def\ds{{\,\mathrm{d}s}}
\def\dtau{{\,\mathrm{d}\tau}}
\def\dt{{\,\mathrm{d}t}}
\def\teDu{{D\teu}}
\def\teDbu{{D\tebu}}
\def\teDv{{D\tev}}
\def\teDw{{D\tew}}
\def\teDbv{{D\tebv}}
\def\teDvp{D{\tevp}}
\def\cI{{\mathcal{I}}}
\def\cB{{\mathcal{B}}}
\def\EDbv{{E_{\teDbv}}}
\def\tedv{{\tens{\dv}}}
\def\temu{{\tens{\mu}}}
\def\btemu{{\overline{\temu}}}
\def\rn{{\mathbb{R}^{n}}}
\def\Rm{{\mathbb{R}^{m}}}
\def\Rn{{\mathbb{R}^{n}}}
\def\id{{\mathsf{Id}}}
\def\P{{\mathsf{P}}}
\def\rnm{{\mathbb{R}^{n\times m}}}
\def\Mb{{\mathcal{M}(\Omega,\Rm)}} 
\newcommand{\data}{\textit{\texttt{data}}}
\title[Gradient Riesz potential estimates for Orlicz systems]{Gradient Riesz potential estimates\\ for a general class of measure data quasilinear systems}
\author{Iwona Chlebicka}\address{Iwona Chlebicka \\
Institute of Applied Mathematics and Mechanics, University of Warsaw \\ ul. Banacha 2, 02-097 Warsaw, Poland\\  \texttt{e-mail: i.chlebicka@mimuw.edu.pl}} 
\author{Minhyun Kim}\address{Minhyun Kim\\Department of Mathematics \& Research Institute for Natural Sciences, Hanyang University, 04763 Seoul, Republic of Korea\\ \texttt{e-mail: minhyun@hanyang.ac.kr}}
\author{Marvin Weidner}\address{Marvin Weidner \\ Departament de Matemàtiques i Informàtica, Universitat de Barcelona, Gran Via de les Corts Catalanes 585, 08007 Barcelona, Spain\\ \texttt{e-mail: mweidner@ub.edu}}
\newcommand\blfootnote[1]{%
  \begingroup
  \renewcommand\thefootnote{}\footnote{#1}%
  \addtocounter{footnote}{-1}%
  \endgroup
}
\begin{document}

\makeatletter
\@namedef{subjclassname@2020}{\textup{2020} Mathematics Subject Classification}
\makeatother
\subjclass[2020]{35B45, 35J47}
\blfootnote{Data sharing not applicable to this article as no datasets were generated or analysed during the current study.}

\maketitle \sloppy

\thispagestyle{empty}

\belowdisplayskip=18pt plus 6pt minus 12pt \abovedisplayskip=18pt
plus 6pt minus 12pt
\parskip 4pt plus 1pt
\parindent 0pt

\parindent 1em

\begin{abstract}
We study the gradient regularity of solutions to measure data elliptic systems with Uhlenbeck-type structure and Orlicz growth. For any bounded Borel measure, pointwise estimates for the gradient of solutions are provided in terms of the truncated Riesz potential. This allows us to show a precise transfer of regularity from data to solutions on various scales.
\end{abstract}

\setcounter{tocdepth}{1}

\tableofcontents

\section{Introduction}

\subsection{Objectives}

The goal of this article is to establish fine local regularity properties of solutions ${\teu}:\Omega \subset \Rn\to\Rm$ ($n \geq 2$, $m \geq 1$) to quasilinear elliptic systems with measure data of the following form
 \begin{equation}
\label{eq:mu}
-{ \tedv}\left( \frac{g(|\teDu|)}{|\teDu|}  \teDu \right)=\temu\quad\text{in}~ \Omega\,,
\end{equation}
where $\Omega$ is open in $\Rn$, $\tedv$ stands for the $\Rm$-valued divergence operator, $\temu\in\mathcal{M}(\Omega,\Rm)$ is a bounded Borel measure, and $g$ is a function satisfying certain Orlicz-type growth conditions. In particular, we are interested in the precise transfer of regularity properties from the data $\temu$ to the solution $\teu$. In order to achieve this, we establish a pointwise estimate for $\teDu$ in terms of the truncated Riesz potential of $\temu$ (see~\eqref{Riesz-potential}) reading
\begin{equation}
\label{intro:eq:u-est}  g(|\teDu(x_0 )|)\lesssim \cI^{|\temu|}_1 (x_0 ,r)+ g\left(\mean{B_r(x_0)} |\teDu(x)|\dx\right)\,.
\end{equation}

Our framework is fairly general and includes for instance the following special cases: $p$-Laplacian, when $g(t)=t^{p-1}$ and $p>2$, as well as operators generated by Zygmund-type functions $g_{p,\alpha}(t)=t^{p-1}\log^\alpha(c+t)$ for $p>2$, $\alpha\in\R$, and sufficiently large $c$.

Potential estimates of the form \eqref{intro:eq:u-est} are an adequate nonlinear replacement of integral representation formulas in terms of fundamental solutions. Such formulas hold true for linear equations like the scalar Poisson equation $-\Delta u = \mu$ but cannot be expected in the realm of nonlinear problems. A main achievement of the research during  the past thirty years has been to establish nonlinear potential estimates in terms of Wolff and Riesz potentials for solutions and gradients of solutions to nonlinear problems of the form
\begin{equation}
\label{eq:mu-scalar} 
-\dv\, \opA(Du)=\mu \quad\text{in}~ \Omega\,
\end{equation}
involving $\opA$ of various rate of degeneracy. Potential estimates can be applied to infer fine information about solutions and allow to investigate sharp regularity properties of solutions to \eqref{eq:mu-scalar} in terms of the inhomogeneity $\mu$. Another challenge  comes from the presence of general measure data $\mu$, since in this setting the classical notion of weak solutions turns out not to be suitable for the analysis of \eqref{eq:mu-scalar}. Instead, one employs the weaker solution concept of SOLA (solutions obtained as limits of approximations) introduced in~\cite{BG}. These solutions are less controlled making their analysis more delicate. 

The landmark paper \cite{KiMa92}, where potential estimates have been established for equations driven by the $p$-Laplacian,  has been followed by a stream of deep results, see e.g.~\cite{KiMa94,KuMi2013,DuMi2010}. Equations of non power-type growth like~\eqref{eq:mu} are studied since~\cite{Go,Ta,lieb}. Potential estimates for solutions to scalar problems with Orlicz growth were provided in~\cite{CGZG-Wolff,Maly-Orlicz}. On the other hand, one can analyze the regularity transfer of the inhomogeneity not only to the solution, but also to its gradient. In the super-quadratic case, gradient bounds for solutions to measure data equations were first provided in terms of nonlinear Wolff-type potentials \cite{DuMi2011,DuMi2010,DuMi2010-2}. Later, improved estimates in terms of Riesz potentials were found to hold true (see \cite{KuMi2013}). Gradient potential estimates for solutions to \eqref{eq:mu-scalar} with Orlicz growth functions $g$ were established in~\cite{Baroni-Riesz} and extended in \cite{XZM}. We refer to~\cite{KuMi2014,KuMi2016} for a rather complete account on the state of art.

While the nonlinear potential theory for scalar equations of the form \eqref{eq:mu-scalar} is by now fairly well-understood, some questions remained open in the development of an analogous theory for nonlinear systems. The analysis of systems comes along with some significant technical difficulties, which are for instance caused by the lack of a maximum principle. Moreover, controlling the energy of solutions is very hard if a system is not of a particular structure. The regularity theory for homogeneous nonlinear systems of quasi-diagonal structure modeled upon the $p$-Laplacian has been initiated by Uhlenbeck and Uralt'seva in their celebrated works~\cite{Ural,Uhl}.

Despite the aforementioned serious challenges, the potential theory for nonlinear vectorial problems remains valid in some special cases. In particular, let us refer to~\cite{CiSch} for Wolff potential estimates for solutions to $p$-Laplace systems with datum in divergence form, to~\cite{KuMi2018} for estimates for solutions and their gradients to $p$-Laplace problems with measure datum in terms of Wolff and Riesz potentials, respectively, and to~\cite{CYZG} for generalized Wolff potential estimates for solutions to measure data problems exposing Orlicz growth. The goal of this article is to extend the nonlinear potential theory for vectorial $p$-Laplace problems to nonlinear systems subject to more general growth conditions.

\subsection{Main results and their consequences }\label{ssec:formulation-n-reg-cons}
Complementing the study~\cite{Baroni-Riesz,DuMi2011,KuMi2013,KuMi2018,CiSch,CYZG}, in this article we provide the Riesz potential estimate~\eqref{intro:eq:u-est} for gradients of solutions to problems like~\eqref{eq:mu} possessing Orlicz growth. We stress that such estimates are already known in the scalar case~\cite{Baroni-Riesz} and that our result is an exact and precise analogue of the work~\cite{KuMi2018} for vectorial problems with power growth.

Let us define a vector field $\opA:\rnm\to\rnm$ by
\begin{equation}
    \label{opA:def}\opA(\texi):=\frac{g(|\texi|)}{|\texi|}\,\texi\,.
\end{equation}
With this definition \eqref{eq:mu} can be rewritten as a problem of the form $-\tedv\, \opA(\teDu) = \temu$. Throughout the rest of the article we will impose the following assumption on the function~$g$.

\settheoremtag{(G)}
\begin{theorem} \label{ass:G}
We say that $g : [0,\infty) \to [0,\infty)$ satisfies Assumption~\ref{ass:G} if there exist $2<p\leq q<\infty$ and an $N$-function $G \in C^3((0,\infty))\cap C(\rp)$, i.e. $G$ is convex, vanishes only at $0$, and satisfies 
$\lim _{t \to 0}{G (t)}/{t}=0$ {and} $\lim _{t \to \infty
}{G (t)}/{t}=\infty$, such that $g=G'$ and such that for all $t \geq 0$:
\begin{align} 
\label{a:p}
pG(t) &\leq t g(t) \leq qG(t)\,,\\
\label{a:p-1}
(p-1)g(t) &\leq t g'(t) \leq (q-1)g(t)\,,\\
\label{a:p-2}
(p-2) g'(t) &\leq tg''(t) \leq (q-2) g'(t)\,.
\end{align}
\end{theorem}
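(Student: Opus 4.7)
Since Assumption~\ref{ass:G} is a hypothesis rather than a claim to be deduced, there is no proof in the usual sense; the natural forward-looking task is to verify that the assumption is genuinely consistent and that the canonical models advertised after~\eqref{a:p-2}---the $p$-Laplacian ($g(t)=t^{p-1}$, $p>2$) and the Zygmund-type $g_{p,\alpha}(t)=t^{p-1}\log^{\alpha}(c+t)$---fall inside its scope. My plan is to build the verification around a single structural observation: the three sandwiches \eqref{a:p}, \eqref{a:p-1}, \eqref{a:p-2} form a chain under successive differentiation, in the sense that \eqref{a:p-1} bears to $(g,g')$ the same relation \eqref{a:p} bears to $(G,g)$, and \eqref{a:p-2} applies the same template once more. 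Consequently, whenever $g$ has the form $t^{p-1}\ell(t)$ with $\ell$ a sufficiently smooth slowly varying factor, uniform control on the logarithmic derivatives $t\ell'/\ell$ and $t\ell''/\ell'$ will automatically propagate $p,q$-sandwiches through all three levels at once.

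For the pure power case I would simply take $G(t)=t^{p}/p$; direct computation gives $tg(t)=pG(t)$, $tg'(t)=(p-1)g(t)$, and $tg''(t)=(p-2)g'(t)$, so all three inequalities hold with equality and $p=q$. The strict lower bound $p>2$ is what makes the lower estimate in \eqref{a:p-2} compatible with $g''\ge 0$, explaining why the super-quadratic regime is baked into the assumption from the start.

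For the Zygmund-type case I would write $g=t^{p-1}L(t)$ with $L(t)=\log^{\alpha}(c+t)$ and record the identity
\[
\frac{tL'(t)}{L(t)}=\frac{\alpha t}{(c+t)\log(c+t)},
\]
which, together with the analogous expression for $tL''(t)/L'(t)$, is bounded on $[0,\infty)$ once $c$ is taken large enough that $\log(c+t)$ is uniformly bounded away from zero. Expanding $tg/G$, $tg'/g$, and $tg''/g'$ via the product rule then exhibits each ratio as the pure-power constant ($p$, $p-1$, $p-2$ respectively) plus a correction of size $O(\|tL'/L\|_{\infty}+\|tL''/L'\|_{\infty})$. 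Choosing $q$ slightly larger than $p$ absorbs the correction on the upper side, and the delicate point---which I view as the main obstacle---is to preserve the \emph{lower} bound in \eqref{a:p-2}, since a negative correction can in principle violate $p-2>0$. This is precisely what forces $c$ to be chosen sufficiently large in terms of $|\alpha|$ and $p$: the correction can then be made strictly smaller than $p-2$, so the full three-level chain closes and Assumption~\ref{ass:G} is satisfied by the whole Zygmund-type family.
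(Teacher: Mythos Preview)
You correctly recognize that Assumption~\ref{ass:G} is a definition/hypothesis rather than a theorem, and accordingly the paper offers no proof---the statement is simply imposed as a standing structural condition on $g$. Your supplementary verification that the $p$-Laplacian and Zygmund-type examples satisfy the three-level sandwich is sound and goes beyond what the paper does (the paper merely asserts that these examples fall within the framework without carrying out the computation), so there is nothing to compare on the level of proof strategy.
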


As is common in the literature on measure data problems, we consider solutions obtained as limits of approximations (SOLA), defined as follows:

\begin{defi}[Solutions obtained as limits of approximations (SOLA)]
Suppose that Assumption~\ref{ass:G} holds true. A map $\teu\in {W^{1,1}(\Omega,\Rm)}$ such that $\int_\Omega g(|\teDu|)\dx<\infty$ is called a SOLA to~\eqref{eq:mu} if there exists a sequence $(\teu_{h})\subset W^{1,G }(\Omega,\Rm)$ of weak solutions to the systems
\begin{equation*}
-{ \tedv}\left( \frac{g(|D\teu_{h}|)}{|D\teu_{h}|}  D\teu_{h} \right) = {\temu_h} \quad\text{in}~\Omega
\end{equation*}
such that $\teu_{h}\to \teu$ locally in $W^{1,1}(\Omega,\Rm)$, where $(\temu_h)\subset L^\infty (\Omega,\Rm)$ is a sequence
of maps that converges to $\temu$ weakly in the sense of measures and satisfies
\begin{equation}
    \label{conv-of-meas}
\limsup_h |\temu_h |(\overline{B}) \leq |\temu|(\overline{B})\quad\text{
for every ball $B\subset\Omega$\,.}
\end{equation}
\end{defi}
\noindent The existence of SOLA to \eqref{eq:mu} is established in~\cite[Theorem~1.2]{CYZG-Ex} for any measure datum $\temu \in \Mb$. 

We provide pointwise gradient estimates in terms of truncated Riesz potentials given by
  \begin{equation}
\label{Riesz-potential}
\cI^{|\temu|}_1(x_0,R):=\int_0^R  \frac{|\temu|(B_r(x_0))}{r^{n-1}}\,\mathrm{d}r\,.
\end{equation}  
As mentioned before, in the superquadratic case, Riesz potentials are in general smaller than Wolff-potentials. Therefore, estimates from above involving Riesz potential are more precise.

We are now ready to state the two main results of this article.

 \begin{theo}[Pointwise Riesz potential estimates]\label{theo:pointwise}
    Let $\teu:\Omega\to\Rm$ be a SOLA to~\eqref{eq:mu} for some $\temu\in\Mb$ and suppose that Assumption~\ref{ass:G} holds true. Let $B_r(x_0)\Subset\Omega$. If $\cI^{|\temu|}_1(x_0,r)$ is finite, then $x_0$ is a Lebesgue's point of $\teDu$ and\begin{equation}
        \label{Riesz-osc-est}
        g(|\teDu(x_0)-(\teDu)_{B_r(x_0)}|)\leq C_\cI \cI^{|\temu|}_1(x_0,r)+C_{\cI} g\left(\mean{B_r(x_0)}|\teDu-(\teDu)_{B_r(x_0)}|\dx\right)
    \end{equation}
holds for some $C_\cI=C_\cI(n,m,p,q,G(1),g(1),g'(1),g''(1))>0$. In particular, we have the following pointwise estimate:
\begin{flalign}
\label{eq:u-est}  g(|\teDu(x_0 )|)\leq C_{\cI} \cI^{|\temu|}_1 (x_0 ,r)+C_{\cI}g\left(\mean{B_r(x_0)} |\teDu(x)|\dx\right)\,.
\end{flalign}
\end{theo}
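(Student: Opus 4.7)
The plan is to follow the Kuusi--Mingione iteration strategy developed for $p$-Laplace systems in \cite{KuMi2018}, adapted to the Orlicz--Uhlenbeck framework as in \cite{Baroni-Riesz} for the scalar case. Fix $x_0$ with $B_r(x_0)\Subset\Omega$, choose a universally small $\sigma\in(0,1/4]$, and work on the dyadic chain $B_j:=B_{r_j}(x_0)$ with $r_j:=\sigma^j r$. Set
\[
E(B_j):=\mean{B_j}\bigl|\teDu-(\teDu)_{B_j}\bigr|\dx, \qquad a_j:=\frac{|\temu|(B_j)}{r_j^{n-1}},
\]
so that a telescoping comparison with the integral gives $\sum_{j\geq 0}a_j \simeq \cI_1^{|\temu|}(x_0,r)$. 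The whole argument will first be carried out on the smooth approximations $\teu_h$, for which every quantity is finite and every test function admissible; the final estimate for the SOLA $\teu$ then follows by passage to the limit using \eqref{conv-of-meas}.

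The first ingredient is a \emph{comparison estimate}. On each $B_j$, let $\tev_j\in\teu+W_0^{1,G}(B_j,\Rm)$ solve $-\tedv\,\opA(\teDv_j)=0$ with $\tev_j=\teu$ on $\partial B_j$. The vectorial monotonicity of $\opA$ (inherent in Assumption \ref{ass:G}, via convexity of $G$ combined with \eqref{a:p-1}), together with a Boccardo--Gallou\"et style truncation of $\teu-\tev_j$ to deal with the low integrability typical of measure data \cite{BG,DuMi2010,CYZG}, produces
\[
\mean{B_j}|\teDu-\teDv_j|\dx \leq C\,g^{-1}(a_j),
\]
with $C=C(n,m,p,q)$. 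The second ingredient is an \emph{excess decay for the homogeneous comparison map}. Since $\tev_j$ solves a homogeneous system with Uhlenbeck--Orlicz structure satisfying \eqref{a:p}--\eqref{a:p-2}, known $C^{1,\alpha}$ regularity yields $\alpha=\alpha(n,m,p,q)\in(0,1]$ and a universal $C_h$ such that, for $\tau\in(0,1/2]$,
\[
\mean{B_{\tau r_j}}\bigl|\teDv_j-(\teDv_j)_{B_{\tau r_j}}\bigr|\dx \leq C_h\,\tau^{\alpha}\mean{B_{r_j}}\bigl|\teDv_j-(\teDv_j)_{B_{r_j}}\bigr|\dx.
\]
Combining the two estimates and absorbing $\tev_j$-differences into the $\teu$-excess at the cost of $g^{-1}(a_j)$ produces the one-step recursion
\[
E(B_{j+1}) \leq C_h\,\sigma^{\alpha}E(B_j) + C\sigma^{-n}g^{-1}(a_j),
\]
which becomes strictly contractive modulo the forcing term as soon as $\sigma$ is fixed so that $C_h\sigma^{\alpha}\leq 1/2$.

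The final step is \emph{iteration}. Apply $g$ to the recursion, exploit the $\Delta_2$-property of $g$ and of $g^{-1}$ (both consequences of \eqref{a:p}--\eqref{a:p-1}), and sum the resulting geometric series to get
\[
g(E(B_j)) \leq C\,2^{-j}g(E(B_0)) + C\sum_{i=0}^{j-1} a_i \leq C\,g\!\left(\mean{B_r(x_0)}\bigl|\teDu-(\teDu)_{B_r(x_0)}\bigr|\dx\right) + C\,\cI_1^{|\temu|}(x_0,r).
\]
A parallel telescoping of the increments $(\teDu)_{B_{j+1}}-(\teDu)_{B_j}$, using $|(\teDu)_{B_{j+1}}-(\teDu)_{B_j}|\leq \sigma^{-n}E(B_j)$ and the same summability, shows that $\{(\teDu)_{B_j}\}_j$ is Cauchy in $\mathbb{R}^{n\times m}$, so $x_0$ is a Lebesgue point of $\teDu$ whenever $\cI_1^{|\temu|}(x_0,r)<\infty$, with limit equal to $\teDu(x_0)$. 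Passing $j\to\infty$ yields \eqref{Riesz-osc-est}, and \eqref{eq:u-est} follows from it by the triangle inequality together with Jensen's inequality for the convex $N$-function $G$. The main obstacle is to run the iteration uniformly across all dyadic scales despite the non-homogeneous nature of $g$: unlike the pure $p$-power case of \cite{KuMi2018}, recovering precisely $\sum_i a_i$ (rather than a scale-dependent weighted variant) hinges on the combination of \eqref{a:p}, \eqref{a:p-1} and the superquadratic regime $p>2$, which together guarantee the sub-additive behaviour of $g^{-1}$ on the dyadic forcing coefficients and thereby reconstruct the Riesz potential on the right-hand side.
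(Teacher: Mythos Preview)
Your outline has a genuine gap at the very first ingredient, the comparison estimate
\[
\mean{B_j}|\teDu-\teDv_j|\dx \le C\,g^{-1}(a_j),
\]
where $\tev_j$ solves the homogeneous Dirichlet problem with boundary datum $\teu$. You propose to obtain this by ``vectorial monotonicity of $\opA$ together with a Boccardo--Gallou\"et style truncation of $\teu-\tev_j$''. This is exactly the scalar argument of \cite{Baroni-Riesz,DuMi2010}, and it does \emph{not} transfer to systems. When you test the difference of the two systems with the vectorial truncation $T_k(\teu-\tev_j)$, the gradient $DT_k$ on the set $\{|\teu-\tev_j|>k\}$ equals $\tfrac{k}{|\teu-\tev_j|}(\id-\P(\teu-\tev_j))D(\teu-\tev_j)$, see \eqref{DTk}. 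The resulting term
\[
\big(\opA(\teDu)-\opA(\teDv_j)\big):\big(\id-\P(\teu-\tev_j)\big)(\teDu-\teDv_j)
\]
has no sign: monotonicity of $\opA$ only controls the pairing against the full increment $\teDu-\teDv_j$, not against its projection onto $(\teu-\tev_j)^\perp$. Consequently the level-set energy bound that drives the scalar Boccardo--Gallou\"et argument is unavailable, and no estimate of the claimed form is known for the Dirichlet comparison map in the vectorial setting. This is precisely the obstruction alluded to in the introduction (``it required a novel approach'') and the reason \cite{KuMi2018} was written.

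The paper circumvents this by never forming a Dirichlet comparison map. Instead it splits each scale into a \emph{degenerate} regime $E_j\ge\theta A_j$ and a \emph{nondegenerate} regime $E_j\le\theta A_j$. In the degenerate regime (Proposition~\ref{prop-5.1}) an $\opA$-harmonic map is produced by the measure-data $\opA$-harmonic approximation lemma (Theorem~\ref{theo:Ah-approx}), which only needs $\teu$ to be \emph{almost} $\opA$-harmonic and yields closeness with an $\ve$-loss rather than the sharp $g^{-1}(a_j)$. In the nondegenerate regime (Proposition~\ref{prop-7.1}) one linearizes around $(\teDu)_{B_j}$ and compares with an $A$-harmonic map for the constant-coefficient tensor $A=\cL((\teDu)_{B_j})$; the required bound on the remainder uses the delicate reverse-H\"older and weighted energy estimates of Lemmas~\ref{lem-6.1}--\ref{lem-7.3}. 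These two regimes feed into Lemmas~\ref{lem8.1}--\ref{lem8.3}, and the actual iteration (Lemma~\ref{lem-8.4}) is run under a two-sided control on $C_j=A_j+H_1E_j$, with a further dichotomy $A_0\gtrless\Psi/16$ in the proof of Theorem~\ref{theo:pointwise}. None of this machinery is optional in the vectorial case; your single contractive recursion $E(B_{j+1})\le \tfrac12 E(B_j)+C\sigma^{-n}g^{-1}(a_j)$ simply cannot be established, because the comparison step it rests on fails.
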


Note that the first claim in Theorem~\ref{theo:pointwise} is a consequence of the first part of the following theorem. The second part provides a criterion for the gradient of solution to be continuous.

  \begin{theo}[VMO and continuity criteria]\label{theo:VMOe}
    Let $\teu:\Omega\to\Rm$ be a SOLA to~\eqref{eq:mu} for some $\temu\in\Mb$ and suppose that Assumption~\ref{ass:G} holds true. Let $x_0 \in \Omega$. Then, we have:
    \begin{enumerate}[(i)]
        \item
        If  $\ {\displaystyle      \lim_{\vr\to 0}{|\temu|(B_\vr(x_0))}{\vr^{1-n}}=0}\,$, 
    then $\teDu$ has vanishing mean oscillation at $x_0$, i.e. 
    \begin{equation*}
        \lim_{\vr\to 0}\mean{B_\vr(x_0)}|\teDu-(\teDu)_{B_\vr(x_0)}|\dx=0\,.
    \end{equation*}
    \item
    If $\ \displaystyle\lim_{\vr\to 0}\sup_{x\in B_r(x_0)}\cI^{|\temu|}_1(x,\vr)=0\,$, 
then $\teDu$ is continuous in $B_r(x_0)\,.$
    \end{enumerate}
\end{theo}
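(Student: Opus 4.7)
Both items will be derived from the excess decay machinery underlying the proof of Theorem~\ref{theo:pointwise}. Writing
$$E(x,\rho) := \mean{B_\rho(x)}|\teDu-(\teDu)_{B_\rho(x)}|\dx,$$
the key ingredient I would extract from (or parallel to) the proof of Theorem~\ref{theo:pointwise} is an iteration inequality of the form
$$E(x,\sigma\rho) \leq \tau\, E(x,\rho) + C\, g^{-1}\!\left(\frac{|\temu|(B_\rho(x))}{\rho^{n-1}}\right)$$
valid for some $\sigma,\tau\in(0,1)$ and $C>0$ depending only on the $\data$, whenever $B_\rho(x)\Subset\Omega$. This is obtained by comparing $\teu$ with the solution $\tev$ of the homogeneous Orlicz system on $B_\rho(x)$ sharing the same boundary trace, invoking the Uhlenbeck-type $C^{1,\alpha}$/Campanato decay for $D\tev$, and controlling the comparison error by the mass $|\temu|(B_\rho(x))$ rescaled through~$g^{-1}$.

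For part (i), the density assumption makes the measure term in the iteration arbitrarily small for small $\rho$. Iterating along $\rho_k=\sigma^k\rho_0$ and summing the resulting (essentially geometric) series forces $E(x_0,\rho_k)\to 0$, and standard interpolation between consecutive dyadic scales then yields $\lim_{\rho\to 0}E(x_0,\rho)=0$, which is exactly the VMO property at $x_0$.

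For part (ii), the uniform Riesz potential assumption implies in particular that $|\temu|(B_\rho(x))\rho^{1-n}\to 0$ uniformly in $x\in B_r(x_0)$, so part (i) applies at every such $x$. Moreover, $\cI_1^{|\temu|}(x,r)<\infty$ at every $x\in B_r(x_0)$, hence by Theorem~\ref{theo:pointwise} every point is a Lebesgue point of $\teDu$ and $\teDu(x)=\lim_{\rho\to 0}(\teDu)_{B_\rho(x)}$. Plugging the uniform smallness into \eqref{Riesz-osc-est} (with $r=\rho_k$) and combining with the iteration from (i) gives that $x\mapsto (\teDu)_{B_{\rho_k}(x)}$ converges to $\teDu$ uniformly on $B_r(x_0)$; since each pre-limit term is continuous in $x$ (as the ball-average of an $L^1_{\mathrm{loc}}$ map), the uniform limit $\teDu$ is continuous on $B_r(x_0)$.

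The main obstacle is establishing the excess decay iteration with the correct absorption of the Orlicz nonlinearity: one must transfer between $g$-averaged and linearly averaged oscillations via the $\Delta_2$-type bounds \eqref{a:p}--\eqref{a:p-2}, and the comparison estimate between $\teu$ and $\tev$ must be scale-invariant in the Orlicz sense (with the right power of the measure density appearing through $g^{-1}$, not through a naive $L^{p}$-scaling). Once this iteration is in hand, both the VMO criterion and the continuity criterion reduce to well-known summation and uniform-convergence arguments.
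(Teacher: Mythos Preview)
Your high-level plan is right and matches the paper: part (i) follows from an excess-decay iteration of the form
\[
E(x_0,\sigma\rho)\le \tau E(x_0,\rho)+C\,g^{-1}\!\left(\frac{|\temu|(B_\rho(x_0))}{\rho^{n-1}}\right),
\]
and part (ii) from the oscillation estimate \eqref{Riesz-osc-est} by showing that the averages $(\teDu)_{B_\rho(\cdot)}$ converge uniformly. This is exactly how the paper proceeds (Lemma~\ref{lem-8.5} for (i), then Theorem~\ref{theo:pointwise} for (ii)).

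The genuine gap is in how you propose to obtain the iteration. You write that it comes from ``comparing $\teu$ with the solution $\tev$ of the homogeneous Orlicz system on $B_\rho(x)$ sharing the same boundary trace'' and controlling the difference by the measure density. In the vectorial measure-data setting this direct boundary-value comparison does \emph{not} deliver the estimate $\mean{B_\rho}|\teDu-\teDv|\lesssim g^{-1}(|\temu|(B_\rho)/\rho^{n-1})$: the scalar proof of such a bound uses truncations together with the maximum principle, which are unavailable for systems. Moreover, for a SOLA $\teu\in W^{1,1}$ with only $\int g(|\teDu|)<\infty$, the Dirichlet problem in $W^{1,G}$ with trace $\teu$ is not well-posed, so you would at least have to pass through the approximating energy solutions $\teu_h$ (the paper does this explicitly; see Step~0 in Lemma~\ref{lem-8.4}).

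The paper circumvents the failed comparison by a two-alternative scheme: in the \emph{degenerate} regime $E_j\ge\theta A_j$ it uses the measure-data $\opA$-harmonic approximation (Theorem~\ref{theo:Ah-approx}/Proposition~\ref{prop-5.1}), which produces an $\opA$-harmonic map close to $\teu$ \emph{without} solving a boundary value problem; in the \emph{nondegenerate} regime $E_j\le\theta A_j$ it linearizes and compares with a solution of a constant-coefficient linear system (Proposition~\ref{prop-7.1}). Only after combining Lemmas~\ref{lem8.1}--\ref{lem8.3} does one get the single-line iteration you quote. So your plan downstream of the iteration is fine, but you should replace the naive boundary-value comparison by this dichotomy (or at least acknowledge that this is the actual content of the iteration step).
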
 

 \begin{coro}\label{coro:Stein}
Suppose Assumption~\ref{ass:G} is satisfied  and $\teu : \Omega \to \Rm$ is a SOLA  to~\eqref{eq:mu}. 
 If $|\temu| \in \Lambda^{1, n}(\Omega)$ (Lorentz space defined in Appendix~\ref{ssec:fnsp}), then $\teDu$ is locally bounded and continuous in $\Omega$. 
\end{coro}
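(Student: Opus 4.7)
The plan is to reduce the corollary to the pointwise estimate~\eqref{eq:u-est} and to the continuity criterion in Theorem~\ref{theo:VMOe}(ii) by showing that the assumption $|\temu|\in \Lambda^{1,n}(\Omega)$ forces the truncated Riesz potential $\cI^{|\temu|}_1$ to be uniformly bounded on compact subsets of $\Omega$ and to vanish uniformly as the radius tends to zero.

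\emph{Step 1: Bounding the Riesz potential by the Lorentz norm.} First I would prove a pointwise bound of the form
\begin{equation*}
\sup_{x\in K}\cI^{|\temu|}_1(x,R)\leq C(n)\,\||\temu|\|_{\Lambda^{1,n}(\Omega)}
\end{equation*}
for every compact $K\subset\Omega$ and every $R\leq \mathrm{dist}(K,\partial\Omega)$. This is the standard embedding of $L^{n,1}$-type Lorentz data into bounded Riesz potentials, obtained by writing $r^{1-n}|\temu|(B_r(x))=\int_0^{|B_r|} h_{x,r}(s)\,\mathrm{d}s$ with $h_{x,r}$ controlled by the decreasing rearrangement of $|\temu|$ and applying Fubini on the outer integral in~\eqref{Riesz-potential}.

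\emph{Step 2: Local boundedness of $\teDu$.} Fix a ball $B_{2r}(x_0)\Subset\Omega$ and apply~\eqref{eq:u-est} at every point $x\in B_r(x_0)$ with radius $r$. The second term on the right is controlled by $g\bigl(r^{-n}\|\teDu\|_{L^1(B_{2r}(x_0))}\bigr)$, which is finite since $\teu\in W^{1,1}$; the first term is uniformly bounded thanks to Step~1. Since $g$ is strictly increasing and unbounded by Assumption~\ref{ass:G}, inverting yields a uniform bound on $|\teDu(x)|$ for a.e.\ $x\in B_r(x_0)$, and the first part of Theorem~\ref{theo:pointwise} identifies each such $x$ as a Lebesgue point. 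Hence $\teDu$ is locally bounded.

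\emph{Step 3: Uniform vanishing of the truncated potential and continuity.} The absolute continuity of the $\Lambda^{1,n}$-norm with respect to the domain of integration gives that the bound in Step~1, when carried out on the tail $\int_0^{\vr}$ instead of $\int_0^R$, satisfies $\sup_{x\in B_r(x_0)}\cI^{|\temu|}_1(x,\vr)\to 0$ as $\vr\to 0$. Invoking Theorem~\ref{theo:VMOe}(ii) then yields that $\teDu$ is continuous in $B_r(x_0)$, and since $x_0\in\Omega$ and $r$ were arbitrary, continuity in $\Omega$ follows.

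The main technical obstacle is Step~1: it requires the precise definition of $\Lambda^{1,n}(\Omega)$ from the appendix together with a careful rearrangement argument showing that membership in this Lorentz space is exactly the right scale-invariant condition to make $\cI^{|\temu|}_1$ bounded and equiabsolutely continuous in the radius variable, uniformly in the base point. Once this ingredient is in place, the remainder is a direct application of the two previously stated theorems.
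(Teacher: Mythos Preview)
Your proposal is correct and follows essentially the same route as the paper: the key step is that $|\temu|\in\Lambda^{1,n}(\Omega)$ forces $\sup_{x\in B_r(x_0)}\cI^{|\temu|}_1(x,\vr)\to 0$ as $\vr\to 0$, after which Theorem~\ref{theo:VMOe}(ii) gives continuity of $\teDu$. The only difference is that you single out local boundedness as a separate Step~2 via the pointwise estimate~\eqref{eq:u-est}, whereas the paper simply notes that continuity on an open set already yields local boundedness, so no separate argument is needed.
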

This fact is known as {\it the nonlinear Stein theorem}. We can infer it directly from Theorem~\ref{theo:VMOe} {\it (ii)}, as  $|\temu| \in \Lambda^{1, n}(\Omega)$ implies that $\lim_{\vr\to 0}\sup_{x\in B_r(x_0)}\cI^{|\temu|}_1(x,\vr)=0\,$. Related results for variational minimizers are proven in \cite[Theorem~1.15]{bemi}, for their global counterpart we refer to~\cite{CiMa-ARMA2014}.

A central advantage of Theorem~\ref{theo:pointwise} is that it reduces the problem of establishing gradient estimates for solutions to~\eqref{eq:mu} to the study of Riesz potentials. Since the boundedness of Riesz potentials is well understood in various classical function spaces, see e.g.~\cite{Adams-Riesz,Ci-pot}, we can directly infer multiple regularity properties of gradients of solutions to~\eqref{eq:mu} from the potential estimate in Theorem~\ref{theo:pointwise}. We summarize the consequences of the potential estimate in the following corollary. Note that the definitions of the Lorentz spaces $\Lambda^{r,s}$, the Morrey spaces $L_\vt^r$, the Lorentz--Morrey spaces $\Lambda_\vt^{r,s}$, and the Orlicz space $L\log L$ are recalled in Appendix~\ref{ssec:fnsp}.

 \begin{coro}\label{coro:reg}
Suppose Assumption~\ref{ass:G} is satisfied  and $\teu : \Omega \to \Rm$ is a SOLA  to~\eqref{eq:mu}.  Then the following implications hold true.\begin{enumerate}[(i)]
    \item  If $1 < r <n$, $0 <s\leq\infty$, and $|\temu|\in\Lambda^{r,s}_{\mathrm{loc}}(\Omega)$, then $g(|\teDu|)\in \Lambda^{\frac{r n}{n-r},s}_{\mathrm{loc}}(\Omega)$\,.
   \item   If $|\temu|\in L^1_{\mathrm{loc}} (\Omega)$, then $g(|\teDu|)\in \Lambda^{\frac{n}{n-1},\infty}_{\mathrm{loc}}(\Omega)$\,.
   \item  If locally $|\temu|\in L\log L (\Omega)$, then $g(|\teDu|)\in L_{\mathrm{loc}}^{\frac{n}{n-1}}(\Omega)$\,.
   \item If $1 < r < \vt \leq n$ and locally $|\temu| \in L^{r}_{\vt}(\Omega)$, then locally $g(|\teDu|) \in L_{\vt}^{\frac{\vt r}{\vt - r}}(\Omega)$\,.
   \item  If $1 < r < \vt \leq n$, $0<s<\infty$, 
 and locally $|\temu| \in \Lambda^{r, s}_{\vt}(\Omega)$, then $g(|\teDu|) \in \Lambda^{\frac{\vt r}{\vt - r}, \frac{\vt s}{\vt - r}}_{\mathrm{loc}}(\Omega)$\,.
\end{enumerate}
\end{coro}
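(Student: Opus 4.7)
The plan is to derive each regularity statement as a straightforward consequence of the pointwise bound
\[
g(|\teDu(x_0)|)\le C_\cI\,\cI^{|\temu|}_1(x_0,r)+C_\cI\,g\!\left(\mean{B_r(x_0)}|\teDu|\dx\right)
\]
from Theorem~\ref{theo:pointwise}, combined with classical mapping properties of the truncated Riesz potential operator $\temu\mapsto\cI^{|\temu|}_1(\cdot,R)$ on the respective target spaces. First I would fix a compactly contained ball $B_{2R}(x_0)\Subset\Omega$ and apply the pointwise estimate at every Lebesgue point $y\in B_R(x_0)$ with radius $R$; the second, averaged term on the right is then bounded by a constant depending only on $\|\teDu\|_{L^1(B_{2R}(x_0))}$ and $g$, so it lies in $L^\infty_{\mathrm{loc}}$ and is harmless in all five target norms on bounded open subsets.

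Having reduced the problem to the regularity of $\cI^{|\temu|}_1(\cdot,R)$, I would invoke the following standard (O'Neil--Adams--Hedberg type) mapping properties: $(\mathrm{i})$ the Riesz potential operator maps $\Lambda^{r,s}\to\Lambda^{rn/(n-r),s}$ for $1<r<n$ and $0<s\le\infty$; $(\mathrm{ii})$ the endpoint weak-type estimate $L^1\to\Lambda^{n/(n-1),\infty}$; $(\mathrm{iii})$ the $L\log L\to L^{n/(n-1)}$ bound (Stein); $(\mathrm{iv})$ the Adams embedding $L^r_\vt\to L^{\vt r/(\vt-r)}_\vt$ on Morrey scales; and $(\mathrm{v})$ its Lorentz--Morrey refinement $\Lambda^{r,s}_\vt\to\Lambda^{\vt r/(\vt-r),\vt s/(\vt-r)}$. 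Applied to $|\temu|$ localized to $B_{2R}(x_0)$, each of these directly produces the stated membership for $g(|\teDu|)$ after we invoke the pointwise estimate.

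The main obstacle I anticipate is making the control of the averaged tail $g(\mean{B_r(y)}|\teDu|\dx)$ compatible with the finer (Morrey and Lorentz--Morrey) target spaces in (iv)--(v), where a mere $L^\infty$ bound is in principle not sharp enough. To handle this cleanly, I would use the standard a priori estimate for SOLA of Orlicz growth $\int_{B_R(y)}g(|\teDu|)\dx\lesssim |\temu|(B_{2R}(y))\,R+R^n$ (see~\cite{CYZG-Ex}), combined with Jensen's inequality applied to the $N$-function $G$ via \eqref{a:p}, to show that $g(\mean{B_r(y)}|\teDu|\dx)$ is itself controlled by a Morrey-type average of $|\temu|$; in fact one may iterate the pointwise estimate along a geometric sequence of radii, so that the tail at scale $R$ gets re-absorbed into a Riesz potential at a smaller scale and a residual lower-order term that fits the target norm by the choice of exponents.

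Finally, parts $(\mathrm{ii})$ and $(\mathrm{iii})$ follow as the endpoint cases of $(\mathrm{i})$ via the weak-type $(1,1)$ bound and the classical $L\log L$ refinement, while $(\mathrm{v})$ reduces to $(\mathrm{iv})$ via real interpolation on the Morrey scale; hence none of these require separate treatment beyond citing the relevant mapping properties recalled in Appendix~\ref{ssec:fnsp} (see~\cite{Adams-Riesz,Ci-pot}). The overall structure thus parallels exactly~\cite[Corollary~1.4]{KuMi2018} and its scalar Orlicz analogue in~\cite{Baroni-Riesz}, with the Orlicz growth entering only through the composition with $g$ on the left-hand side.
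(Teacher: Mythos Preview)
Your proposal is correct and matches the paper's approach: the paper does not give a detailed proof of Corollary~\ref{coro:reg} but states it as an immediate consequence of the pointwise bound \eqref{eq:u-est} together with the classical mapping properties of the Riesz potential on the relevant scales, citing~\cite{Adams-Riesz,Ci-pot} and the parallel arguments in~\cite[Section~10]{KuMi2018} and~\cite{Bor-Chl-Potentials}.

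One remark: the ``main obstacle'' you anticipate for (iv)--(v) is not actually present. Once you fix $R$ with $B_{2R}\Subset\Omega$, the term $g\big(\mean{B_R(y)}|\teDu|\dx\big)$ is a locally bounded function of $y$, and $L^\infty_{\mathrm{loc}}$ embeds into every local Morrey and Lorentz--Morrey space on bounded sets (since $\varrho^{(\vt-n)/r}\|c\|_{L^r(B_\varrho)}\approx c\,\varrho^{\vt/r}$ stays bounded as $\varrho\to 0$). Hence the iteration and the SOLA a~priori estimate you propose are unnecessary; the straightforward $L^\infty$ bound on the averaged tail already suffices in all five cases.
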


Similar families of results on regularity consequences of potential estimates can be found in \cite[Section~4]{Ci-pot},  \cite[Corollary~1]{KuMi2014}, and \cite[Section~10]{KuMi2018} for power-growth problems, and in \cite[Section~5.3]{Bor-Chl-Potentials} for scalar Orlicz problems.  

There is a wide range of literature on regularity properties for solutions to nonlinear problems in the spirit of Corollary~\ref{coro:reg} not relying on techniques from nonlinear potential theory. Let us give a brief overview of the results in this direction. We refer to~\cite{IC-gradest} for gradient regularity in the case of Lorentz--Morrey data (by the methods of~\cite{min-grad-est}) and to~\cite{CiMa} for data in the generalized Marcinkiewicz space. Precise scales ensuring (H\"older) continuity of solutions are presented in~\cite{CGZG-Wolff} in the scalar case and in~\cite{CYZG} for systems. We refer to \cite{BCDS,BCDKS,CiSch} for transfer of regularity from divergence form data in $p$-Laplace problems.  Moreover, second-order regularity properties for Orlicz systems can be found in~\cite{BCDM}. Finally, let us mention  \cite{DiMa,MarcPapi,Marc1996,str,bemi} for regularity results similar to Corollary~\ref{coro:reg} in the setting of systems subject to more general growth conditions. To our best knowledge potential estimates for measure data problems in such a general setup are not known.

Let us point out that implications of potential estimates are not restricted to the spaces mentioned in Corollary~\ref{coro:reg}. In fact, a lot more results on the transfer of regularity from locally integrable data to solutions can be proved by using the estimate $
    \int_{\Omega} |f(y)|\dy \leq \int_{0}^{|\Omega|} f^{*}(s)\ds$, together with the rearrangement inequality (see~\cite{oneil}):
\begin{equation*}
(\cI_{\alpha}f)^*(t)\leq C\int_t^\infty s^{\frac{\alpha}{n}-1}f^{**}(s)\ds\,,\quad\text{for some } \ C=C(\alpha,n) > 0\,.\end{equation*}
From here, to show boundedness of $g(|\teDu|)$ in any rearrangement invariant space, it suffices to apply a~certain Hardy-type inequality to the right-hand side in the display above. For results in this direction, we refer e.g. to ~\cite{Ci-strong-weak,CiMa-JEMS}. 

\subsection{Methods} Many techniques that are commonly used for scalar equations cannot be adapted to systems of PDEs. This results from deep differences between  scalar and vectorial problems like the lack of a comparison principle for solutions to systems. Moreover, continuity of a solution to an equation can be proven for measurable and bounded coefficients, whereas for systems one needs their continuity. In turn, despite it was expected since~\cite{KuMi2013} that $p$-Laplace systems should enjoy a potential estimate of the type~\eqref{intro:eq:u-est}, it required a novel approach and was established in~\cite{KuMi2018}. The main idea of the latter contribution is to employ several technical ideas from the classical potential theory concerning measure data problems and to use a special kind of $p$-harmonic approximation that is a common tool in the partial regularity theory, see~\cite{DuGr,DuMi2004}. In this article, we develop a general growth version of the results in \cite{KuMi2018}. Note that this problem is mentioned as an interesting and far from trivial task in ~\cite[Section~11]{KuMi2018}.

The methods we apply in this paper can be seen as a careful refinement of the technique introduced in~\cite{KuMi2018}. Adapting the approach to the Orlicz setting comes with several technical difficulties.  One of the main tools is the $\opA$-harmonic approximation result due to~\cite{CYZG}, whose proof is also based on the ideas of \cite{KuMi2018}. Let us note that there are other $\opA$-harmonic approximation lemmas in the literature for related problems with Orlicz growth, e.g.~\cite{FILV,DSV3}, but they are not suitable for treating measure data problems. On the other hand, it should be stressed that the proofs not only allow for treating arbitrary measure data, but at the same time the results are obtained precisely in the scale relevant for the operator. The method we develop is delicate enough to keep such precision.

The proof goes as follows. In Section~\ref{sec:comparison}, we provide some comparison results for weak solutions, which are key ingredients for the proofs of our main results. In order to establish the pointwise gradient estimate at a Lebesgue's point $x_0$, we divide into two cases --- degenerate and nondegenerate case --- depending on the size of the gradient average at $x_0$ compared to the so-called excess functional. When the size of the gradient average is smaller than the excess functional, the system \eqref{eq:mu} is considered as a degenerate system, and otherwise as a nondegenerate system. In each case, we establish comparison estimates, which provide regularity transfer from approximations to solutions. In the degenerate case,  by using an $\opA$-harmonic approximation result (Theorem~\ref{theo:Ah-approx}), we construct a solution to a nonlinear homogeneous equation (an $\mathcal{A}$-harmonic map) which is close to the solution in a suitable sense. In the nondegenerate case we compare the original solution with a solution to the linearized system. These elaborate estimates, taking the Orlicz growth into account, are of independent interest. Then, in Section~\ref{sec:SOLA}, we obtain the excess decay for SOLA by combining the two cases described above and passing to the limit. This requires the analysis of delicate estimates on concentric balls. Finally, we can conclude the proof of our main results Theorem~\ref{theo:pointwise}  and Theorem~\ref{theo:VMOe} in Section~\ref{sec:main-proofs}.  

\section{Preliminaries}\label{sec:prelim}

\subsection{Notation}
We shall adopt the convention of denoting by $c$ or $C$ a constant that may vary from line to line. In order to shorten notation, we collect the dependencies of certain constants on the parameters of our problem as $\data=\data(n,m,p,q,G(1),g(1),g'(1),g''(1))$. By $a \lesssim b$ we mean $a \leq Cb$ for some $C=C(\data)$. By $a\approx b$ we mean $a\lesssim b$ and $b\lesssim a$.

For a measurable set $U\subset \rn$ with $0<|U|<\infty$ and a measurable map $\tef\colon U\to \R^{k}$, $k\ge 1$, we define
\begin{equation*}
(\tef)_U:=\mean{U}\tef(x) \dx :=\frac{1}{|U|}\int_{U}\tef(x) \dx\quad\text{and}\quad  \osc_U \tef:=\sup_{x,y\in U}|\tef(x)-\tef(y)|\,.
\end{equation*}
By `$\cdot$' we denote the scalar product of two vectors, i.e.  for ${\texi}=(\xi_1,\dots,\xi_m)\in \Rm$ and 
	$\teeta= (\eta_1,\dots,\eta_m)\in \Rm$ we have ${\texi}\cdot{\teeta} = \sum_{i=1}^m \xi_i \eta_i$, and by `$:$' the Frobenius product of the second-order tensors, i.e. for $\texi=[\xi_{j}^\alpha]_{j=1,\dots,n,\, \alpha=1,\dots,m}$ and $\teeta=[\eta_{j}^\alpha]_{j=1,\dots,n,\, \alpha=1,\dots,m}$ we have $\texi: \teeta=\sum_{\alpha=1}^m \sum_{j=1}^n \xi_{j}^\alpha \eta_{j}^\alpha.$
	By `{$\otimes$}' we denote the tensor product of two vectors, i.e for ${{\texi}}=(\xi_1,\dots,\xi_k)\in {\R^k}$  and 
	${{\teeta}}= (\eta_1,\dots,\eta_l)\in \R^l$ we have $\texi\otimes\teeta:=[\xi_i\eta_j]_{i=1,\dots,k,\,j=1,\dots,l},$ that is \[{\texi}\otimes{\teeta}:=\begin{pmatrix}
  \xi_{1}\eta_{1} & \xi_{1}\eta_{2} & \cdots & \xi_{1}\eta_{l} \\
  \xi_{2}\eta_{1} & \xi_{2}\eta_{2} & \cdots & \xi_{2}\eta_{l} \\
  \vdots  & \vdots  &  & \vdots  \\
  \xi_{k}\eta_{1} & \xi_{k}\eta_{2} & \cdots & \xi_{k}\eta_{l} 
 \end{pmatrix}\in \R^{k\times l}\,.\]

We define
the projection $\P:\Rm\to\R^{m\times m}$  as\begin{equation}
    \label{P}\P(\texi):=\frac{\texi\otimes\texi}{|\texi|^2}\,,
\end{equation}
and the vectorial truncation operator $T_k:\Rm\to\Rm$ as
\begin{equation}\label{Tk}T_k(\texi):=\min\left\{1,\frac{k}{|\texi|}\right\}\texi\,.
\end{equation}
Then, of course, $D T_k:\Rm\to\R^{m\times m}$ is given by\begin{equation}
    \label{DTk}DT_k(\texi)=\begin{cases}
    \id&\text{if }\ |\texi|\leq k\,,\\
    \frac{k}{|\texi|}\left(\id-\P(\texi)\right)&\text{if }\ |\texi|> k\,.
    \end{cases}
\end{equation}

\subsection{On the vector field}
We note that for $\opA$ defined by~\eqref{opA:def} it holds that
\begin{equation*}
    \opA(\texi)
    = \frac{g(|\texi|)}{|\texi|} \xi_{i}^{\alpha} e^{\alpha} \otimes e_{i}\,, \quad \texi \in \rnm\,.
\end{equation*}
The differential of $\opA$ is given by
\begin{equation*}
    \partial \opA(\texi) = \frac{g(|\texi|)}{|\texi|} \mathcal{L}(\texi)\,,
\end{equation*}
where $\cL: \rnm \to \rnm$ is defined as
\begin{equation} \label{eq:L}
    \cL(\texi) := \left( \delta_{\alpha \beta} \delta_{ij} + \frac{|\texi|g'(|\texi|)-g(|\texi|)}{g(|\texi|)} \frac{\xi_{i}^{\alpha} \xi_{j}^{\beta}}{|\texi|^{2}} \right) (e^{\alpha} \otimes e_{i}) \otimes (e^{\beta} \otimes e_{j})\,.
\end{equation}
Note that the vector field $\cL$ satisfies
\begin{equation}\label{L-basic-prop}
    |\teze|^{2} \leq (\cL(\texi): \teze): \teze \qquad\text{and}\qquad |\cL(\texi): \teze| \leq \frac{|\texi| g'(|\texi|)}{g(|\texi|)} |\teze|\,
\end{equation}
for every $\texi, \teze \in \rnm$. In particular, as we assume \eqref{a:p-1}, the second condition above implies
\begin{equation}\label{eq:L-upper}
    |\cL(\texi): \teze| \leq (q-1) |\teze|\qquad \text{for every $\texi, \teze \in \rnm$}\,.
\end{equation}

The second differential of $\opA$ is given by
\begin{align*}
    \partial^{2}\opA(\texi)
    &= \frac{|\texi| g'(|\texi|) - g(|\texi|)}{|\texi|^{2}} \left( \frac{\xi_{i}^{\alpha}}{|\texi|} \delta_{\beta \gamma} \delta_{jk} + \frac{\xi_{j}^{\beta}}{|\texi|} \delta_{\gamma\alpha} \delta_{ki} + \frac{\xi_{k}^{\gamma}}{|\texi|} \delta_{\alpha\beta} \delta_{ij} \right) \\
    &\quad + \frac{|\texi|^{2} g''(|\texi|) - 3|\texi| g'(|\texi|) + 3g(|\texi|)}{|\texi|^{2}} \frac{\xi_{i}^{\alpha}\xi_{j}^{\beta} \xi_{k}^{\gamma}}{|\texi|^{3}}\,.
\end{align*}
By the assumptions \eqref{a:p-1} and \eqref{a:p-2}, we have
\begin{equation}\label{partial-2-A-bound}
    |\partial^{2}\opA(\texi)| \leq c g''(|\texi|)\,.
\end{equation}

\subsection{Orlicz spaces}

 Basic reference for this section is~\cite{adams-fournier}, where the theory of Orlicz spaces is presented for scalar functions. The theory can be extended to $\Rm$-valued functions by obvious modifications.
 
We study solutions to PDEs in Orlicz--Sobolev spaces equipped with a~modular function $G$ satisfying Assumption~\ref{ass:G}. Let us define a modular \begin{equation*}
\vr_{G,\Omega}(|\texi|)=\int_{\Omega} G(|\texi|)\dx\,.
\end{equation*}
We define ${L}^G(\Omega,\Rm)$ to be the space of measurable functions endowed with the Luxemburg norm 
\begin{equation*}
\|\tef\|_{L^G(\Omega, \Rm)}=\inf\left\{\lambda>0:\ \ \vr_{G,\Omega}\left( \tfrac{1}{\lambda} |\tef|\right)\leq 1\right\}\,.
\end{equation*}
 We define the Orlicz--Sobolev space  $W^{1,G}(\Omega)$ by
\begin{equation*} 
W^{1,G}(\Omega,\Rm)=\big\{\tef\in {W^{1,1}(\Omega,\Rm)}:\ \ |\tef|\,,|D{\tef}|\in L^G(\Omega,\Rm)\big\}\,,
\end{equation*}
where the gradient is understood in the distributional sense, endowed with the norm
\begin{equation*}
    \|\tef\|_{W^{1,G}(\Omega,\Rm)}=\inf\bigg\{\lambda>0 :\ \    \vr_{G,\Omega}\left( \tfrac{1}{\lambda} |\tef|\right)+ \vr_{G,\Omega}\left( \tfrac{1}{\lambda} |D{\tef}|\right)\leq 1\bigg\} \, .
\end{equation*}
By $W_0^{1,G}(\Omega,\Rm)$ we denote the closure of $C_c^\infty(\Omega,\Rm)$ under the above norm. Assumption~\ref{ass:G} implies that the Orlicz--Sobolev space $W^{1,G}(\Omega,\Rm)$ is separable and reflexive. Moreover, one can apply arguments of \cite{Gossez} to infer density of  smooth functions  in $W^{1,G}(\Omega,\Rm)$.

\subsection{Solutions to problems with regular data}

Although our main results are stated for SOLA to problems with measure data, we will first establish most auxiliary results in the framework of weak solutions and later run an approximation argument. For this purpose we provide the following definition.

\begin{defi}
Let $\temu \in (W^{1,G}_{0}(\Omega,\Rm))'$. A map $\teu\in W^{1,G}(\Omega,\Rm)$ is called a \emph{weak solution} to~\eqref{eq:mu} if
\begin{equation*}
    \int_\Omega \opA(\teDu): \teDvp\dx=\int_\Omega\tevp\,\mathrm{d}\temu \quad\text{for every }\ \tevp\in W^{1,G}_{0}(\Omega,\Rm)\,.
\end{equation*}
In particular, we call $\teu$ a \emph{$\opA$-harmonic map} if it is a continuous weak solution to~\eqref{eq:mu} with $\temu \equiv 0$.
\end{defi}

\begin{rem}[Existence and uniqueness of weak solutions] \rm For $\temu\in (W^{1,G}_{0}(\Omega,\Rm))',$ due to the strict monotonicity of the operator $\opA$, there exists a unique weak solution to~\eqref{eq:mu} for boundary data belonging to a suitable function space, see~\cite[Section~3.1]{KiSt}.
\label{rem:weak-sol}
\end{rem}

\begin{rem} \rm
    By Campanato's characterization \cite[Theorem~2.9]{giusti} one can infer H\"older continuity $C^{0,\gamma}(\Omega,\Rm)$ of $\opA$-harmonic maps with any exponent $\gamma\in (0,1)$ from \cite[Proposition~3.13]{CYZG}.
\end{rem}

\section{Comparison results} \label{sec:comparison}

The goal of this section is to establish comparison estimates for weak solutions to \eqref{eq:mu}, which are the central ingredients in the proof of the excess decay estimates. We distinguish between the degenerate case (if the size of the gradient average is smaller than the excess functional) and nondegenerate case (otherwise). The main results of this section are Propositions~\ref{prop-5.1}~and~\ref{prop-7.1}.

\subsection{Degenerate approximation}

The following is the main result of this subsection.

\begin{prop}\label{prop-5.1}
    Under Assumption~\ref{ass:G} suppose $\teu\in W^{1,G}(B_r,\Rm)$ is a weak solution to~\eqref{eq:mu}  in $B_r=B_r(x_0)$ with $\temu\in C^\infty(B_r,\Rm)$.  Let $s\in [0,\sa)$ be admissible in Theorem~\ref{theo:Ah-approx} and let $\varepsilon, \theta \in (0,1)$. Then there exists a positive constant $c_{\rm d}=c_{\rm d}(\data,{s},\ve,\theta)$ such that if\begin{equation}
    \label{degeneracy-condition}
    \theta|(\teDu)_{B_r}|\leq \mean{B_r}|\teDu-(\teDu)_{B_r}|\dx\,,
\end{equation}
then there exists an $\opA$-harmonic map $\tev$ in $B_{r/2}$ satisfying\begin{equation*}
    \big({g}^{1+s}\big)^{-1}\left(\mean{B_{r/2}}{g}^{1+s}(|\teDu-\teDv|)\dx\right)\leq {\ve}\,\mean{B_r}|\teDu-(\teDu)_{B_r}|\dx+c_{\rm d} \,g^{-1}\left(\frac{|\temu|(B_r)}{r^{n-1}}\right)\,.
\end{equation*}
\end{prop}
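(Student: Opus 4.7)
The plan is to apply the $\opA$-harmonic approximation theorem (Theorem~\ref{theo:Ah-approx}) to $\teu$ on the ball $B_{r/2}$, after first verifying that $\teu$ is almost $\opA$-harmonic with a defect controlled by $|\temu|(B_r)/r^{n-1}$. Set $\lambda:=\mean{B_r}|\teDu-(\teDu)_{B_r}|\dx$. The degeneracy condition \eqref{degeneracy-condition} gives $|(\teDu)_{B_r}|\le\theta^{-1}\lambda$, hence $\mean{B_r}|\teDu|\dx \lesssim_\theta \lambda$; this identifies $\lambda$ as the correct reference scale on which smallness should be measured.

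The first step is to establish the almost $\opA$-harmonicity of $\teu$. Testing the weak formulation against $\tevp\in C_c^\infty(B_{r/2},\Rm)$ and using Poincaré's inequality $\|\tevp\|_\infty\le C r\|\teDvp\|_\infty$ gives
$$\left|\mean{B_{r/2}}\opA(\teDu):\teDvp\dx\right|=\left|\mean{B_{r/2}}\tevp\cdot\temu\dx\right|\le C\,\frac{|\temu|(B_r)}{r^{n-1}}\,\|\teDvp\|_\infty.$$
This is the key smallness estimate that will feed into Theorem~\ref{theo:Ah-approx}.

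Next, I would perform a dichotomy depending on the size of the data relative to the excess, using the parameter $\delta=\delta(\data,s,\ve,\theta)$ supplied by Theorem~\ref{theo:Ah-approx} to realise the closeness $\ve$. If $C|\temu|(B_r)/r^{n-1}\le\delta\, g(\lambda)$, then the previous display places $\teu$ in the hypothesis of Theorem~\ref{theo:Ah-approx} at scale $\lambda$; the theorem then produces an $\opA$-harmonic $\tev$ on $B_{r/2}$ with $(g^{1+s})^{-1}\!\bigl(\mean{B_{r/2}}g^{1+s}(|\teDu-\teDv|)\dx\bigr)\le\ve\lambda$, which is exactly the first term of the desired bound. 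If instead $C|\temu|(B_r)/r^{n-1}>\delta\,g(\lambda)$, then $g^{-1}(|\temu|(B_r)/r^{n-1})\gtrsim\lambda$, so the data term dominates and the right-hand side is admissible to swallow a worst-case estimate: I would take $\tev$ to be the unique $\opA$-harmonic map on $B_{r/2}$ with boundary datum $\teu|_{\partial B_{r/2}}$, and invoke the Dirichlet comparison estimates established earlier in Section~\ref{sec:comparison} to bound $\mean{B_{r/2}}g^{1+s}(|\teDu-\teDv|)\dx$ by a multiple of $g^{1+s}\!\bigl(g^{-1}(|\temu|(B_r)/r^{n-1})\bigr)$, matching the second term on the right-hand side after applying $(g^{1+s})^{-1}$.

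The main obstacle is the Orlicz bookkeeping. The conclusion mixes the \emph{linear} excess $\ve\lambda$ with the \emph{$g$-rescaled} data term $g^{-1}(|\temu|(B_r)/r^{n-1})$, and passing between these scales requires the growth bounds \eqref{a:p}--\eqref{a:p-2} (which make $g$ essentially a power of exponent between $p-1$ and $q-1$) together with Jensen's inequality applied inside the $(g^{1+s})^{-1}$. In addition, the constants $\delta$ and $\ve$ have to be calibrated coherently between the two cases so that the single constant $c_{\rm d}(\data,s,\ve,\theta)$ absorbs both the approximation-theorem constants and the constants from the comparison estimate; this calibration is where the precise role of $\theta$ and of the $(1+s)$-exponent enters.
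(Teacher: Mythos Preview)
Your outline has the right starting point---reduce to Theorem~\ref{theo:Ah-approx}---but two genuine gaps prevent it from closing.

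First, Theorem~\ref{theo:Ah-approx} is a \emph{normalized} statement: it requires $\mean_{B_r}|\teu|\dx\le Mr$ (hypothesis~\eqref{u-male}, with $\delta$ depending on $M$), and its output is a dimensionless $\varepsilon$, not $\varepsilon\lambda$. You never verify~\eqref{u-male}, and you assert the conclusion $(g^{1+s})^{-1}(\dots)\le \varepsilon\lambda$ without explaining how the scale $\lambda$ appears. Also, the test-function estimate you write uses $\|D\tevp\|_\infty$ via a Poincar\'e inequality, whereas~\eqref{Du-male} asks for a bound in terms of $\|\tevp\|_\infty/r$; your inequality runs in the wrong direction for this. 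The paper deals with all of this by rescaling: it sets
\[
\lambda:=\frac{1}{r}\mean_{B_r}|\teu-(\teu)_{B_r}|\dx+g^{-1}\!\Big(c\,\frac{|\temu|(B_r)}{r^{n-1}}\Big),
\]
passes to $\tebu=(\teu-(\teu)_{B_r})/\lambda$, $\btemu=\temu/g(\lambda)$, $\bopA(\texi)=\opA(\lambda\texi)/g(\lambda)$, and checks that $\tebu$ satisfies~\eqref{u-male} with $M=1$ and~\eqref{Du-male} with the prescribed $\delta$ (both terms in $\lambda$ are needed to make \emph{both} hypotheses hold). Theorem~\ref{theo:Ah-approx} then applies to $\tebu$, and scaling back produces the estimate at scale $\lambda$; this is Lemma~\ref{lem-5.2}. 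Proposition~\ref{prop-5.1} then follows from Lemma~\ref{lem-5.2} by converting the zero-order oscillation $\frac{1}{r}\mean_{B_r}|\teu-(\teu)_{B_r}|\dx$ into the gradient excess via Poincar\'e together with the degeneracy condition~\eqref{degeneracy-condition}.

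Second, your dichotomy's ``large data'' branch appeals to ``Dirichlet comparison estimates established earlier in Section~\ref{sec:comparison}'', but no such estimate exists in the paper (Proposition~\ref{prop-5.1} is the first result of that section), and obtaining $\mean_{B_{r/2}} g^{1+s}(|\teDu-\teDv|)\dx$ controlled by data for the boundary-value comparison is exactly the hard vectorial measure-data estimate that Theorem~\ref{theo:Ah-approx} is designed to replace. The paper's rescaling absorbs the data term into $\lambda$ from the outset, so no dichotomy is needed at all.
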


Let us recall a direct consequence of the measure data $\opA$-harmonic approximation lemma provided in \cite[Theorem~4.1]{CYZG}.

\begin{theo}\label{theo:Ah-approx} Let Assumption~\ref{ass:G} be satisfied. Let $\ve>0$, $B_r=B_r(x_0)$, and suppose that there exists $M \ge 1$ such that $\teu\in W^{1,G}(B_r,\Rm)$ satisfies\begin{equation}
    \label{u-male}\mean{B_r}|\teu|\dx\leq Mr\,.
\end{equation}
Then there exists a constant $\sa = \sa(p,q, n) > 0$ such that for all $s \in [0, \sa)$ and some $\delta=\delta(\data,s,M,\ve)\in(0,1]$ the following holds. 

If $\teu$ is almost $\opA$-harmonic in a sense that for every $\tevp\in W^{1,G}_0(B_r,\Rm)\cap L^\infty(B_r,\Rm)$ it holds 
\begin{equation}
    \label{Du-male}\left|\mean{B_r}  
    {\opA(\teDu)}:\teDvp \dx\right|\leq\frac{\delta}{r}\|\tevp\|_{L^\infty(B_r,\Rm)}\,,
\end{equation} 
then there exists an $\opA$-harmonic map $\tev\in W^{1,G}(B_{r/2},\Rm)$ satisfying
    \begin{equation*}
    \mean{B_{r/2}}g^{1+s}(|\teDu-\teDv|)\dx
    \leq \ve\,.
\end{equation*}
\end{theo}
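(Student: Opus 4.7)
The plan is to reduce the statement to an application of Theorem \ref{theo:Ah-approx} on the function $\tew := \teu - (\teu)_{B_r}$, which satisfies the same measure-data equation as $\teu$ (subtracting a constant leaves $\opA(\teDu)$ unchanged). Set $E := \mean{B_r}|\teDu - (\teDu)_{B_r}|\dx$ and $\mu_r := |\temu|(B_r)/r^{n-1}$. The degeneracy condition \eqref{degeneracy-condition} yields $|(\teDu)_{B_r}| \leq E/\theta$, hence $\mean{B_r}|\teDu|\dx \leq (1 + 1/\theta)E$, and Poincar\'e's inequality gives $\mean{B_r}|\tew|\dx \lesssim rE/\theta$. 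Thus hypothesis \eqref{u-male} holds with $M := \max(1, cE/\theta)$. For the almost-$\opA$-harmonicity hypothesis \eqref{Du-male}, any $\tevp \in W^{1,G}_0(B_r,\Rm)\cap L^\infty(B_r,\Rm)$ inserted into the weak formulation gives
\[
\Big|\mean{B_r}\opA(\teDw):\teDvp\dx\Big| = \Big|\frac{1}{|B_r|}\int_{B_r}\tevp\,\mathrm{d}\temu\Big| \lesssim \frac{\mu_r}{r}\|\tevp\|_{L^\infty(B_r,\Rm)}\,.
\]

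I then choose $\tilde{\ve} := g^{1+s}(\ve E)$ as the target error to feed into Theorem \ref{theo:Ah-approx} and let $\delta = \delta(\data, s, M, \tilde{\ve})$ be the associated smallness threshold. In the \emph{small-measure} case $\mu_r \lesssim \delta$ the display above yields \eqref{Du-male}, so Theorem \ref{theo:Ah-approx} produces an $\opA$-harmonic map $\tev \in W^{1,G}(B_{r/2},\Rm)$ satisfying $\mean{B_{r/2}} g^{1+s}(|\teDu-\teDv|)\dx \leq g^{1+s}(\ve E)$. Applying $(g^{1+s})^{-1}$ delivers exactly the $\ve E$ contribution on the right-hand side of the conclusion, with the second summand being non-negative.

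In the complementary \emph{large-measure} regime $\mu_r \gtrsim \delta$, the term $c_{\rm d}\, g^{-1}(\mu_r)$ must instead absorb a crude bound for the left-hand side. For this I take $\tev$ to be the $\opA$-harmonic replacement of $\teu$ on $B_{r/2}$ (so that $\tev-\teu \in W^{1,G}_0(B_{r/2},\Rm)$) and combine the standard monotonicity/Caccioppoli comparison test with $\tevp = \teu-\tev$, the Orlicz Boccardo--Gallouët truncation trick to handle the measure on the right, and a Gehring-type self-improvement to upgrade from $L^1(g)$ to $L^{1+s}(g)$; this yields a bound of the shape
\[
\mean{B_{r/2}} g^{1+s}(|\teDu-\teDv|)\dx \leq K(\data,s,\theta,E,\mu_r)\,.
\]
The hard part is reconciling the $E$-dependence of both $\delta$ (through $M$ and $\tilde{\ve}$) and $K$ with the $E$-independence of $c_{\rm d}$ required by the statement. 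Exploiting the lower bound $\mu_r \gtrsim \delta$ available in this regime together with the polynomial bounds on $g$ from $2<p\leq q$ (so both $g$ and $g^{1+s}$ obey $\Delta_2$ and admit two-sided doubling), one verifies the comparison $(g^{1+s})^{-1}(K) \leq c_{\rm d}\, g^{-1}(\mu_r)$ uniformly in $E$ by enlarging $c_{\rm d}$ depending only on $\data, s, \ve, \theta$. This bookkeeping of polynomial dependencies, rather than the black-box invocation of Theorem \ref{theo:Ah-approx}, is where I expect the bulk of the technical work to lie.
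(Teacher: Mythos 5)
You have misidentified the target. The statement you were asked to prove is Theorem~\ref{theo:Ah-approx} itself --- the measure-data $\opA$-harmonic approximation lemma --- which in the paper is not proved at all: it is cited verbatim as a direct consequence of an external reference (\cite[Theorem~4.1]{CYZG}), with the only local remark being that \cite{CYZG} actually proves a stronger statement for a function growing faster than $g^{1+s}$. Your proposal, by contrast, begins by declaring ``the plan is to reduce the statement to an application of Theorem~\ref{theo:Ah-approx},'' which is circular: you cannot prove this theorem by invoking it as a black box. What you have actually sketched is a proof of Proposition~\ref{prop-5.1} (the degenerate comparison estimate), whose hypotheses and conclusion --- the degeneracy condition \eqref{degeneracy-condition}, the excess $E$, and the two-term right-hand side $\ve E + c_{\rm d}\,g^{-1}(\mu_r)$ --- are precisely the ones your argument tracks. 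None of those objects appear in the statement of Theorem~\ref{theo:Ah-approx}, which only involves the smallness parameters $M$, $\delta$, and the conclusion $\mean{B_{r/2}} g^{1+s}(|\teDu-\teDv|)\dx \leq \ve$.

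Even read as an attempt at Proposition~\ref{prop-5.1}, your approach departs from the paper's: you propose a case split (small-measure versus large-measure) and then, in the large-measure case, an $\opA$-harmonic replacement plus truncation, Gehring self-improvement, and delicate bookkeeping to absorb the dependence of $\delta$ and $K$ on $E$. The paper instead avoids this dichotomy entirely: in Lemma~\ref{lem-5.2} it folds the measure into the rescaling parameter $\lambda := r^{-1}\mean{B_r}|\teu - (\teu)_{B_r}|\dx + g^{-1}\bigl(\delta |B_1|^{-1} |\temu|(B_r)/r^{n-1}\bigr)$, so that the almost-$\opA$-harmonicity hypothesis \eqref{Du-male} is satisfied unconditionally after rescaling, and Proposition~\ref{prop-5.1} then follows from Lemma~\ref{lem-5.2} by a single Poincar\'e step. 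But this comparison is moot: the assigned statement is Theorem~\ref{theo:Ah-approx}, and a self-contained proof of it would require reproducing the argument of \cite[Theorem~4.1]{CYZG} (Lipschitz truncation and compactness for almost-$\opA$-harmonic sequences, adapted to Orlicz growth), not an appeal to the result itself.
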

We note that in~\cite{CYZG} the above result is stated for a function growing faster than $g^{1+s}$, but the recalled formulation is enough for our proof.

In the sequel we will make use of the fact that $\opA$-harmonic maps are Lipschitz continuous. We infer this from \cite[Lemma~5.8]{DSV1} and a standard interpolation technique, see \cite[Chapter 10]{giusti}.
\begin{lem}\label{lem:Lip} 
Suppose Assumption~\ref{ass:G} is satisfied and $\tev\in W^{1,G}(\Omega,\Rm)$ is $\opA$-harmonic in $\Omega$. Then there exists $c=c(\data)>0$ such that
\begin{equation*}
\sup_{B_r} |D\tev| \leq c\, \mean{B_{2r}} |D\tev|\dx\,
\end{equation*}
for any $B_{2r}=B_{2r}(x_0) \Subset \Omega$.
\end{lem}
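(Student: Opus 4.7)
The strategy is the one indicated in the statement itself: use \cite[Lemma~5.8]{DSV1} as a black box to get a sup-bound of $|D\tev|$ in terms of a higher $L^\alpha$-mean of $|D\tev|$ on a slightly larger ball, and then convert this $L^\alpha$-mean into the $L^1$-mean via a standard iteration/interpolation argument in the spirit of \cite[Ch.~10]{giusti}.

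\textbf{Step 1: $L^\alpha \to L^\infty$ bound on nested balls.} Since $\tev$ is $\opA$-harmonic in $\Omega$, Assumption~\ref{ass:G} together with \cite[Lemma~5.8]{DSV1} yields an exponent $\alpha=\alpha(\data)>1$ and a constant $c_0=c_0(\data)>0$ such that for every pair of concentric balls $B_\rho\subset B_\sigma\Subset\Omega$
\begin{equation*}
\sup_{B_\rho}|D\tev|\leq \frac{c_0}{(\sigma-\rho)^{n/\alpha}}\left(\int_{B_\sigma}|D\tev|^\alpha\dx\right)^{1/\alpha}.
\end{equation*}
(If the version in \cite{DSV1} is formulated intrinsically in terms of $G$ or of the shifted $N$-function, one extracts such an estimate from Assumption~\ref{ass:G} via \eqref{a:p}--\eqref{a:p-2}, which control $g$ between two power functions.)

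\textbf{Step 2: Interpolation down to $L^1$.} Fix $B_{2r}(x_0)\Subset\Omega$ and, for $r\leq\rho<\sigma\leq 2r$, apply the estimate from Step~1 after splitting $|D\tev|^\alpha=|D\tev|^{\alpha-1}\cdot|D\tev|$ and bounding the first factor by $\sup_{B_\sigma}|D\tev|^{\alpha-1}$:
\begin{equation*}
\sup_{B_\rho}|D\tev|\leq \frac{c_0}{(\sigma-\rho)^{n/\alpha}}\bigl(\sup_{B_\sigma}|D\tev|\bigr)^{(\alpha-1)/\alpha}\left(\int_{B_\sigma}|D\tev|\dx\right)^{1/\alpha}.
\end{equation*}
Young's inequality with exponents $\alpha/(\alpha-1)$ and $\alpha$ gives, for any $\varepsilon\in(0,1)$,
\begin{equation*}
\sup_{B_\rho}|D\tev|\leq \varepsilon\sup_{B_\sigma}|D\tev|+\frac{C(\varepsilon,\data)}{(\sigma-\rho)^{n}}\int_{B_{2r}}|D\tev|\dx.
\end{equation*}

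\textbf{Step 3: Iteration lemma and conclusion.} Choosing $\varepsilon$ small and applying the standard iteration lemma for nonnegative, bounded, non-decreasing functions (e.g.\ \cite[Lemma~6.1]{giusti}) to the function $\rho\mapsto\sup_{B_\rho}|D\tev|$ on the interval $[r,2r]$, the $\varepsilon\sup_{B_\sigma}$ term is absorbed and one obtains
\begin{equation*}
\sup_{B_r}|D\tev|\leq \frac{C(\data)}{r^{n}}\int_{B_{2r}}|D\tev|\dx=c\,\mean{B_{2r}}|D\tev|\dx,
\end{equation*}
which is the claimed bound.

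\textbf{Main obstacle.} The only non-routine point is confirming that \cite[Lemma~5.8]{DSV1} indeed produces the $L^\alpha$-sup estimate of Step~1 in a form compatible with our class~\eqref{opA:def} under Assumption~\ref{ass:G}; once that is available, Steps~2 and~3 are completely standard and the dependence of $c$ on $\data$ is tracked without difficulty through the absorption constants and the iteration lemma.
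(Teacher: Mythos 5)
Your proposal follows the same route the paper indicates, namely combining \cite[Lemma~5.8]{DSV1} with the standard interpolation/absorption technique from \cite[Chapter 10]{giusti}: the paper states this in a single sentence, and you have simply carried out the standard details. The argument is correct; the only point worth making explicit is that the finiteness of $\rho\mapsto\sup_{B_\rho}|D\tev|$ needed to invoke the iteration lemma is already delivered by the local boundedness in \cite[Lemma~5.8]{DSV1}, and that if the reference estimate is phrased as a sup-bound for $G(|D\tev|)$ (or for $V(D\tev)$) rather than for a power of $|D\tev|$, you should use \eqref{a:p} to pass to a power $\alpha\in(1,\min\{p,q'\})$ before interpolating, as you note parenthetically.
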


In order to show the basic properties of $\teDu$ we shall make use of rescaled functions
\begin{equation}\label{eq:gGH}
    \overline{g}(t) := \overline{g}_{\lambda}(t) := \frac{g(\lambda t)}{g(\lambda)} \qquad\text{and}\qquad \overline{G}(t) := \overline{G}_{\lambda}(t) := \int_{0}^{t} \overline{g}(\tau) \dtau = \frac{G(\lambda t)}{\lambda g(\lambda)} \qquad\text{for a given }\ \lambda>0\,.
\end{equation}

\begin{lem}\label{lem-5.2} Under Assumption~\ref{ass:G} suppose $\teu\in W^{1,G}(B_r,\Rm)$ is a weak solution to~\eqref{eq:mu} in $B_r=B_r(x_0)$ with $\temu\in C^\infty(B_r,\Rm)$.   Let $s\in [0,\sa)$ be admissible in Theorem~\ref{theo:Ah-approx} and let $\varepsilon \in (0,1)$. Then there exist $c_{\rm s}=c_{\rm s}(\data,{s},\ve) > 0$ and a map $\tev$, which is $\opA$-harmonic in $B_{r/2}$, such that
\begin{equation}
    \label{comp-est-1}\big(g^{1+s}\big)^{-1}\left(\mean{B_{r/2}}g^{1+s}(|\teDu-\teDv|)\dx\right)\leq \frac{\varepsilon}{r}\mean{B_r}|\teu-(\teu)_{B_r}|\dx+ c_{\rm s} \,g^{-1}\left(\frac{|\temu|(B_r)}{r^{n-1}}\right).
\end{equation}  
\end{lem}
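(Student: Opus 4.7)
The strategy is to apply the $\opA$-harmonic approximation theorem (Theorem~\ref{theo:Ah-approx}) after rescaling both the unknown and the operator so that its hypotheses are met. Set
\[
\lambda := \frac{1}{r}\mean{B_r}|\teu-(\teu)_{B_r}|\dx + A\,g^{-1}\!\left(\frac{|\temu|(B_r)}{r^{n-1}}\right),
\]
where $A = A(\data,s,\ve) \geq 1$ will be chosen at the end, and define $\tebu := (\teu-(\teu)_{B_r})/\lambda$. Then $\tebu$ is a weak solution in $B_r$ to $-\tedv\,\bopA(\teDbu)=\btemu$, where $\bopA(\teze):=\bar g(|\teze|)\teze/|\teze|$ with $\bar g = \bar g_\lambda$ from \eqref{eq:gGH} and $\btemu := \temu/g(\lambda)$. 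A direct computation using Assumption~\ref{ass:G} shows that $\bar G$ fulfils the same assumption with the same exponents $p,q$, and that $\bar G(1), \bar g(1)=1, \bar g'(1), \bar g''(1)$ lie in intervals depending only on $\data$, uniformly in $\lambda>0$. Therefore Theorem~\ref{theo:Ah-approx} is applicable to $\tebu$ with dependencies collapsing to $\data, s, \ve$.

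To verify its hypotheses, the definition of $\lambda$ immediately yields $\mean{B_r}|\tebu|\dx \leq r$, so \eqref{u-male} holds with $M=1$. For the almost $\bopA$-harmonicity \eqref{Du-male}, testing the rescaled equation gives
\[
\left|\mean{B_r}\bopA(\teDbu):\teDvp\dx\right| = \left|\mean{B_r}\tevp\cdot d\btemu\right| \leq \|\tevp\|_{L^\infty(B_r,\Rm)}\frac{|\temu|(B_r)}{g(\lambda)|B_r|}.
\]
Since $g(t)/t^{p-1}$ is non-decreasing by \eqref{a:p-1} and $A \geq 1$, we have $g(\lambda) \geq A^{p-1}|\temu|(B_r)/r^{n-1}$, so the right-hand side is bounded by $c(n)A^{-(p-1)}\|\tevp\|_{L^\infty}/r$. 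Choosing $A$ large enough in terms of the $\delta=\delta(\data,s,1,\ve')$ supplied by Theorem~\ref{theo:Ah-approx} (for the parameter $\ve' := \ve^{(1+s)(q-1)}$ fixed below) yields \eqref{Du-male}.

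The conclusion of Theorem~\ref{theo:Ah-approx} provides a $\bopA$-harmonic map $\tebv$ in $B_{r/2}$ with $\mean{B_{r/2}}\bar g^{1+s}(|\teDbu-\teDbv|)\dx \leq \ve'$. Set $\tev := \lambda\tebv$; by the identity $\opA(\lambda\teze) = g(\lambda)\bopA(\teze)$, the map $\tev$ is $\opA$-harmonic in $B_{r/2}$. Using $\bar g^{1+s}(|\teDbu-\teDbv|) = g^{1+s}(|\teDu-\teDv|)/g^{1+s}(\lambda)$ together with the fact that $g^{1+s}(t)/t^{(1+s)(q-1)}$ is non-increasing (another consequence of \eqref{a:p-1}), which implies $(g^{1+s})^{-1}(\ve' y) \leq (\ve')^{1/((1+s)(q-1))}(g^{1+s})^{-1}(y)$ for $\ve' \leq 1$, we obtain
\[
(g^{1+s})^{-1}\!\left(\mean{B_{r/2}}g^{1+s}(|\teDu-\teDv|)\dx\right) \leq \ve\lambda \leq \frac{\ve}{r}\mean{B_r}|\teu-(\teu)_{B_r}|\dx + \ve A\,g^{-1}\!\left(\frac{|\temu|(B_r)}{r^{n-1}}\right),
\]
so \eqref{comp-est-1} holds with $c_{\rm s} := \ve A$. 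The main delicate point is to ensure that the $\delta$ of Theorem~\ref{theo:Ah-approx} is genuinely independent of the a~priori unknown scale $\lambda$, so that $A$ depends only on $\data, s, \ve$; this is precisely what the uniform control of $\bar G(1), \bar g(1), \bar g'(1), \bar g''(1)$ provides.
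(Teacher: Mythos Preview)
Your proof is correct and follows essentially the same scaling argument as the paper: rescale by $\lambda$, verify the hypotheses of Theorem~\ref{theo:Ah-approx} for $\tebu$ with $M=1$, and scale back using the growth bounds on $g$; the only cosmetic difference is that the paper places the small constant inside $g^{-1}$ in the definition of $\lambda$ rather than as an outer factor $A$. One trivial omission: you should dispose of the case $\lambda=0$ separately (then $\teu$ is constant and $\tev=\teu$ works), since otherwise the rescaling is undefined.
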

\begin{proof}
Let $\tilde{\varepsilon} > 0$ and fix
\[\lambda:=\frac{1}{r}\mean{B_r}|\teu-(\teu)_{B_r}|\dx+g^{-1}\left(\frac{\delta}{|B_1|} \frac{|\temu|(B_r)}{r^{n-1}}\right),\]
where and $\delta=\delta(\data,{s},\tilde{\ve})$ is a constant from Theorem~\ref{theo:Ah-approx} with $M=1$. If $\lambda=0,$ then $\teu$ is constant and $\tev=\teu$. Otherwise we argue by scaling
\begin{equation}\label{eq-scaling}
\tebu:=\frac{\teu-(\teu)_{B_r}}{\lambda}\,,\qquad \btemu:=\frac{\temu}{g(\lambda)}\,,\qquad \bopA(\texi):=\frac{\opA(\lambda\texi)}{g(\lambda)}= \frac{\overline{g}(|\texi|)}{|\texi|}\,\texi\,,
\end{equation}
where $\overline{g}$ is defined as in \eqref{eq:gGH}. Then we have
\[\left|\mean{B_r} \bopA(\teDbu):\teDvp\dx\right|\leq \frac{\|\tevp\|_{L^\infty(B_r,\Rm)}|\temu|(B_r)}{g(\lambda) |B_r|}\leq\frac{\delta}{r}\|\tevp\|_{L^\infty(B_r,\Rm)}\,.\]
By definition of $\tebu$ and $\lambda$ we notice that
\[\mean{B_r}|\tebu|\dx\leq r\,.\]
Therefore, by Theorem~\ref{theo:Ah-approx} applied to $\tebu$ there exists an $\bopA$-harmonic map $\tebv$ in $B_{r/2}$ such that 
\[\mean{B_{r/2}} \overline{g}^{1+s}(|\teDbu-\teDbv|)\dx\leq \tilde{\ve}\,.\] 
By scaling back with $\tev=\lambda\tebv$, which is $\opA$-harmonic, and setting $\tilde{\varepsilon}=\varepsilon^{(q-1)(1+s)}$, we obtain
\begin{equation*}
\mean{B_{r/2}} g^{1+s}(|\teDu-\teDv|)\dx\leq \varepsilon^{(q-1)(1+s)} g^{1+s}(\lambda) \leq g^{1+s}(\varepsilon \lambda)\,,
\end{equation*}
which concludes \eqref{comp-est-1}.
\end{proof}

We are in a position to prove the approximation of a solution $\teu$ by an $\opA$-harmonic function $\tev$.

\begin{proof}[Proof of Proposition~\ref{prop-5.1}]
We have from \eqref{degeneracy-condition} that
\begin{equation*}
\mean{B_{r}} |D\teu| \dx \leq \mean{B_{r}} |D\teu - (D\teu)_{B_{r}}| \dx + |(D\teu)_{B_{r}}| \leq \frac{1+\theta}{\theta} \mean{B_{r}} |D\teu - (D\teu)_{B_{r}}| \dx\,.
\end{equation*}
By Poincar\'e's inequality, we have
\begin{equation} \label{eq:PI}
\mean{B_{r}} |\teu-(\teu)_{B_{r}}| \dx \leq C r \mean{B_{r}} |D\teu| \dx \leq C\frac{1+\theta}{\theta} r \mean{B_{r}} |D\teu - (D\teu)_{B_{r}}| \dx\,.
\end{equation}
{By Lemma~\ref{lem-5.2} with $\tilde{\varepsilon} = \frac{\theta}{C(1+\theta)}\varepsilon$, there exists an $\opA$-harmonic map $\tev$ in $B_{r/2}$ such that
\begin{equation*}
\big(g^{1+s}\big)^{-1}\left(\mean{B_{r/2}}g^{1+s}(|\teDu-\teDv|)\dx\right)\leq \frac{\tilde{\varepsilon}}{r}\mean{B_r}|\teu-(\teu)_{B_r}|\dx+ c_{\rm s} \,g^{-1}\left(\frac{|\temu|(B_r)}{r^{n-1}}\right),
\end{equation*}
where $c_{\rm s}=c_{\rm s}(\data, s, \tilde{\varepsilon})$. Thus, it follows from \eqref{eq:PI} that \begin{equation*}
\big({g}^{1+s}\big)^{-1}\left(\mean{B_{r/2}}{g}^{1+s}(|\teDu-\teDv|)\dx\right)\leq {\ve}\,\mean{B_r}|\teDu-(\teDu)_{B_r}|\dx+c_{\rm d} \,g^{-1}\left(\frac{|\temu|(B_r)}{r^{n-1}}\right)\,,
\end{equation*}
where $c_{\rm d}$ is a constant depending only on $\data, s, \varepsilon$ and $\theta$.}
\end{proof}

\subsection{Nondegenerate linearization}
The following is the main result of this subsection.

\begin{prop}
    \label{prop-7.1} 
    Under Assumption~\ref{ass:G} suppose $\teu\in W^{1,G}(B_r,\Rm)$ is a weak solution to~\eqref{eq:mu}  in $B_r=B_r(x_0)$ with $\temu\in C^\infty(B_r,\Rm)$, and assume that $(\teDu)_{B_r}\neq 0$. 
Let $\cL$ be defined as in~\eqref{eq:L}. 
Then for every $\ve\in (0,1]$ there exists a constant $\theta_{\rm nd}=\theta_{\rm nd}(\data,\ve)\in(0,1)$   such that if both
    \begin{equation*}
        \mean{B_r}|\teDu-(\teDu)_{B_r}|\dx \leq 
         {\theta_{\rm nd}}|(\teDu)_{B_r}|
    \end{equation*}
    and
    \begin{equation}
        \label{nice-measure}
        \frac{|\temu|(B_r)}{r^{n-1}} \leq \theta_{\rm nd}\frac{g(|(\teDu)_{B_r}|)}{|(\teDu)_{B_r}|}\mean{B_r}|\teDu-(\teDu)_{B_r}|\dx
    \end{equation}
    hold, then there exists a map $\teh\in W^{1,2}(B_{r/4},\Rm)$ which is $A$-harmonic in $B_{r/4}$, that is a solution to
    \begin{equation*}
        -\tedv (A:D\teh)=0\ \ \text{weakly in $B_{r/4}\ $ for $\ A:= \cL((\teDu)_{B_r})$}\,,
   \end{equation*}
    such that
    \begin{equation*}
        \mean{B_{r/4}}|\teDu-D\teh|\dx \leq {\ve}\,\mean{B_r}|\teDu-(\teDu)_{B_r}|\dx\,.
    \end{equation*}
\end{prop}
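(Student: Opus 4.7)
Normalize by setting $\xi_0 := (\teDu)_{B_r}$, which is nonzero by assumption, and $A := \cL(\xi_0)$. By \eqref{L-basic-prop} and \eqref{eq:L-upper}, $A$ is a constant, uniformly elliptic coefficient tensor with bounds depending only on $\data$, and $\partial\opA(\xi_0) = \tfrac{g(|\xi_0|)}{|\xi_0|} A$. I define $\teh \in W^{1,2}(B_{r/2},\Rm)$ to be the unique weak solution of the Dirichlet problem
\begin{equation*}
-\tedv(A:D\teh) = 0 \quad\text{in } B_{r/2}, \qquad \teh - \teu \in W^{1,2}_0(B_{r/2},\Rm),
\end{equation*}
via Lax--Milgram (the trace is well-defined because $\teu \in W^{1,G} \hookrightarrow W^{1,2}$ locally, as $p \geq 2$); interior regularity for constant-coefficient linear systems gives $\teh \in C^{\infty}(B_{r/4},\Rm)$. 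Its restriction to $B_{r/4}$ will be the desired $A$-harmonic map.

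Set $\tew := \teu - \teh$ and let $\teF := \opA(\teDu) - \opA(\xi_0) - \partial\opA(\xi_0):(\teDu-\xi_0)$ denote the linearization error of $\opA$ at $\xi_0$. Subtracting the equations for $\teu$ and $\teh$ and using $\tedv(\opA(\xi_0)) = 0$, one finds
\begin{equation*}
-\tedv(A:D\tew) = \tfrac{|\xi_0|}{g(|\xi_0|)}\temu + \tfrac{|\xi_0|}{g(|\xi_0|)}\tedv\teF \quad\text{in } B_{r/2},
\end{equation*}
with $\tew = 0$ on $\partial B_{r/2}$. Split $\tew = \tew_1 + \tew_2$ so that $\tew_2$ carries the measure datum and $\tew_1$ the divergence-form datum. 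For $\tew_2$, the classical weak-$L^{n/(n-1)}$ estimate for linear constant-coefficient measure-data systems, combined with the Lorentz--$L^1$ embedding on bounded sets, yields
\begin{equation*}
\mean{B_{r/4}} |D\tew_2| \dx \,\lesssim\, \tfrac{|\xi_0|}{g(|\xi_0|)}\cdot\tfrac{|\temu|(B_{r/2})}{r^{n-1}} \,\leq\, \theta_{\rm nd}\mean{B_r}|\teDu - \xi_0|\dx,
\end{equation*}
the last inequality being exactly \eqref{nice-measure}.

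For the divergence-form part $\tew_1$, a second-order Taylor expansion of $\opA$ at $\xi_0$ together with \eqref{partial-2-A-bound} and the relation $g''(t) \lesssim g(t)/t^2$ (which follows from \eqref{a:p-1}--\eqref{a:p-2}) gives the pointwise bound $|\teF| \lesssim \tfrac{g(|\xi_0|)}{|\xi_0|^2}|\teDu - \xi_0|^2$ on the good set $\mathcal{G} := \{|\teDu - \xi_0| \leq |\xi_0|/2\}$; on its complement $\mathcal{B}$ one controls $|\teF|$ by $g(|\teDu|) + g(|\xi_0|)$ via \eqref{a:p-1}. Chebyshev's inequality combined with the oscillation hypothesis yields $|\mathcal{B}| \leq 2\theta_{\rm nd}|B_r|$, so a Gehring-type higher integrability estimate for $\teDu$ converts the contribution on $\mathcal{B}$ into a small multiple of $\mean{B_r}|\teDu - \xi_0|\dx$. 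A Calder\'on--Zygmund $L^q$-bound (with $q > 1$ chosen sufficiently close to $1$) for the constant-coefficient linear system solved by $\tew_1$ then produces
\begin{equation*}
\mean{B_{r/4}} |D\tew_1| \dx \,\lesssim\, C(\theta_{\rm nd})\cdot \mean{B_r}|\teDu - \xi_0|\dx
\end{equation*}
with $C(\theta_{\rm nd}) \to 0$ as $\theta_{\rm nd} \to 0$. Choosing $\theta_{\rm nd}$ small enough depending on $\data$ and $\varepsilon$ closes the argument. The main obstacle is the estimate for $\tew_1$: one must convert the pointwise quadratic control available only on $\mathcal{G}$ into a bound that is \emph{linear} in the oscillation $\mean{B_r}|\teDu - \xi_0|\dx$. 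In the Orlicz setting this forces the use of a quantitative Gehring-type reverse H\"older inequality for $\teDu$ with constants uniform across the operator's growth scale, in contrast to the self-similar scaling enjoyed by pure $p$-growth operators as in \cite{KuMi2018}.
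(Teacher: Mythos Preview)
Your overall architecture---linearize $\opA$ at $\xi_0=(\teDu)_{B_r}$, solve the constant--coefficient system for $\teh$, and apply Calder\'on--Zygmund to $\tew=\teu-\teh$ split into a measure part and a divergence part---is exactly the paper's strategy. The treatment of the measure piece $\tew_2$ is also correct and matches the paper. The genuine gap is in the smallness of the divergence piece $\tew_1$, which you yourself flag as ``the main obstacle'' but do not resolve.

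After normalizing so that $|\xi_0|=1$ and writing $\tebv:=\tebu-\teell$ with $E:=\mean_{B_1}|\teDbv|\le\theta_{\rm nd}$, the Calder\'on--Zygmund step requires
\[
\Big(\mean_{B_{1/4}}|\teW|^t\Big)^{1/t}\ \le\ \ve\,E\qquad\text{for some }t>1,
\]
where $|\teW|\lesssim g''(1+|\teDbu|)\,|\teDbv|^2$. Your good/bad splitting and Gehring--type higher integrability only yield an upper bound of order $E^{1/t}$: on the good set $\{|\teDbv|\le\tfrac12\}$ interpolation with the $L^{1+s}$ reverse H\"older of Lemma~\ref{lem-5.1} gives at best $E^{(1+s)/t}$, while on the bad set the level--set reverse H\"older (Lemma~\ref{lem-6.1}) gives $\int_{\{|\teDbu|>H\}}g^{1+s}(|\teDbu|)\lesssim E$, hence again $E^{1/t}$ after taking roots. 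Since $E<1$ and $t>1$, one has $E^{1/t}\ge E$, so no factor of $\ve$ appears; choosing $\theta_{\rm nd}$ small cannot fix this because the exponent deficit points the wrong way.

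What the paper does to close this is substantially more delicate than a Gehring lemma: it proves a weighted Caccioppoli--type estimate (Lemma~\ref{lem-7.2}) with weight $|\tebv|^{-\gamma}$, and then in Lemma~\ref{lem-7.3} combines this, via H\"older, with the Sobolev--Poincar\'e inequality applied to $\tebv$ (not to $\teDbv$) to extract an \emph{extra} power $E^{\delta}$, $\delta>0$, arriving at $\int_{B_{1/4}}|\teW|^t\lesssim E^{t+\delta}$. It is this surplus exponent $\delta$ that converts $E^{1/t}$ into $E^{1+\delta/t}\le\theta_{\rm nd}^{\delta/t}E$ and produces the smallness. Your sketch is missing precisely this mechanism.
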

   
Let us fix some notation. We will deal with $\tebu\in W^{1,G}(B_1,\Rm)$ being a weak solution to\begin{equation}
    \label{bar-system}-\tedv \bopA (\teDbu)=\btemu\in C^\infty(B_1)\,,
\end{equation}
which is such that \begin{equation}
    \label{normalized-bar-u}|(\teDbu)_{B_1}|=1\qquad\text{and}\qquad|(\tebu)_{B_1}|=0\,. 
\end{equation}
We define $\teell\equiv (\ell^\alpha)_{1\leq\alpha\leq m}$ and $\tebv$ as\begin{equation}\label{def:ell}
    \ell^\alpha(x):=\langle (\teDbu^\alpha)_{B_1},x\rangle\qquad\text{and}\qquad \tebv:=\tebu-\teell\,.
\end{equation}
Then
\begin{equation}
    \label{normalized-bar-v}|(\teDbv)_{B_1}|=0\qquad\text{and}\qquad|(\tebv)_{B_1}|=0\,. 
\end{equation}
Assume further that\begin{equation}
    \label{normalized-bar-v-2}
    \mean{B_1}|\teDbv|\dx\leq 1\qquad\text{and}\qquad|\btemu|(B_1)\leq 1\,,
\end{equation}
then~\eqref{normalized-bar-u} and~\eqref{normalized-bar-v-2} imply that\begin{equation}
    \label{Dbu-small}\mean{B_1}|\teDbu|\dx\leq \mean{B_1}|\teDbv|\dx+|(\teDbu)_{B_1}|\leq 2\,.
\end{equation}
We observe that for $\tau>1$ it holds\begin{equation}
    \begin{cases}\label{trivial-inclusions}
        B_1\cap \{|\tebv|>\tau\}\subset B_1\cap\{|\tebu|>\tau-1\}\,,\\
        B_1\cap \{|\tebu|>\tau\}\subset B_1\cap\{|\tebv|>\tau-1\}\,,\\
        B_1\cap \{|\teDbv|>\tau\}\subset B_1\cap\{|\teDbu|>\tau-1\}\,,\\
        B_1\cap \{|\teDbu|>\tau\}\subset B_1\cap\{|\teDbv|>\tau-1\}\,.
    \end{cases}
\end{equation}

\begin{lem}\label{lem-5.1} Under Assumption~\ref{ass:G} suppose $\teu\in W^{1,G}(B_r,\Rm)$ is a weak solution to~\eqref{eq:mu} in $B_r=B_r(x_0)$ with $\temu\in C^\infty(B_r,\Rm)$.   Let $s\in [0,\sa)$ be admissible in Theorem~\ref{theo:Ah-approx}. Then there exists a positive constant $c_{\rm h}=c_{\rm h}(\data,s)$ such that\begin{equation*}
    \big(g^{1+s}\big)^{-1}\left(\mean{B_{r/2}}g^{1+s}(|\teDu|)\dx\right)\leq c_{\rm h}\left[\mean{B_r}|\teDu|\dx+g^{-1}\left(\frac{|\temu|(B_r)}{r^{n-1}}\right)\right]\,.
\end{equation*}
\end{lem}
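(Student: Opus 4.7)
My plan is to derive the estimate by combining the $\opA$-harmonic comparison produced by Lemma~\ref{lem-5.2} with the interior Lipschitz regularity of $\opA$-harmonic maps (Lemma~\ref{lem:Lip}).

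First, I apply Lemma~\ref{lem-5.2} on $B_r$ with a fixed $\varepsilon\in(0,1]$, say $\varepsilon=1$, to produce an $\opA$-harmonic map $\tev$ on $B_{r/2}$ satisfying
$$
\big(g^{1+s}\big)^{-1}\!\left(\mean{B_{r/2}}g^{1+s}(|\teDu-\teDv|)\dx\right)\leq \frac{1}{r}\mean{B_r}|\teu-(\teu)_{B_r}|\dx + c_{\rm s}\,g^{-1}\!\left(\frac{|\temu|(B_r)}{r^{n-1}}\right).
$$
Poincaré's inequality converts the first term on the right into $c\mean{B_r}|\teDu|\dx$, which already matches the shape appearing in the claim. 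Next, splitting $|\teDu|\leq |\teDu-\teDv|+|\teDv|$ and using the $\Delta_2$-property of $g^{1+s}$ (which holds thanks to \eqref{a:p-1}, ensuring in particular that $g^{1+s}$ remains an $N$-function of comparable growth) gives
$$
\mean{B_{r/2}}g^{1+s}(|\teDu|)\dx\leq C\!\left[\mean{B_{r/2}}g^{1+s}(|\teDu-\teDv|)\dx+\mean{B_{r/2}}g^{1+s}(|\teDv|)\dx\right].
$$
The first piece on the right is controlled by the comparison estimate above. For the second piece, I invoke Lemma~\ref{lem:Lip} on concentric sub-balls, together with a scale adjustment, to bound $\|\teDv\|_{L^\infty}$ by $c\,\mean{B_{r/2}}|\teDv|\dx$; a triangle inequality then gives $\mean{B_{r/2}}|\teDv|\dx\leq \mean{B_{r/2}}|\teDu-\teDv|\dx+\mean{B_{r/2}}|\teDu|\dx$, and the difference term is in turn dominated via Jensen's inequality by the comparison estimate of Lemma~\ref{lem-5.2}. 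Combining these bounds and applying $\big(g^{1+s}\big)^{-1}$ produces the desired inequality.

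The main obstacle is reconciling the ball $B_{r/2}$ of integration with the fact that Lemma~\ref{lem:Lip} only furnishes an $L^\infty$ bound for $\teDv$ on a strictly smaller concentric ball, since $\tev$ is only $\opA$-harmonic in $B_{r/2}$. I expect to resolve this either by a reverse-Hölder-type estimate for $\opA$-harmonic maps in the Orlicz sense (so that $\big(g^{1+s}\big)^{-1}\!\left(\mean{B_{r/2}}g^{1+s}(|\teDv|)\dx\right)\lesssim \mean{B_{r/2}}|\teDv|\dx$ directly), or by first establishing the lemma on a slightly smaller ball and recovering the stated scale via a standard covering/iteration argument. A secondary technical difficulty is that the Jensen-type conversions between $g$ and $g^{1+s}$ must be performed without breaking the structural form of the right-hand side; again the $\Delta_2$-property enforced by \eqref{a:p-1} absorbs the multiplicative losses into the constant $c_{\rm h}$.
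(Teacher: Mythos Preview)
Your approach is correct and essentially matches the paper's proof. The paper resolves the scale mismatch you identify via your second option: it applies the harmonic approximation (directly from Theorem~\ref{theo:Ah-approx} after rescaling, though going through Lemma~\ref{lem-5.2} as you do works equally well) on translated sub-balls $B_{r/2}(y)$ for $y\in B_{r/2}(x_0)$, obtains an $\opA$-harmonic comparison on $B_{r/4}(y)$, uses Lemma~\ref{lem:Lip} to control $\teDv$ on $B_{r/8}(y)$, and then covers $B_{r/2}(x_0)$ by such balls. Your first proposed resolution---a reverse-H\"older bound for $\teDv$ on the \emph{full} ball $B_{r/2}$ where it is $\opA$-harmonic---is not available without additional boundary information on $\tev$, so the covering route is the one to take; also note Lemma~\ref{lem-5.2} requires $\varepsilon\in(0,1)$, so fix any $\varepsilon<1$ rather than $\varepsilon=1$.
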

    
\begin{proof}
Let us fix a constant $\varepsilon >0$ and let $\delta \in (0, 1]$ be the constant given in Theorem~\ref{theo:Ah-approx} with $M=1$. Set
\begin{equation*}
\lambda := \frac{1}{\varepsilon} \mean{B_{r}} |D\teu| \dx + g^{-1} \left( \frac{2^{n-1}r}{\delta} \frac{|\temu|(B_{r})}{|B_{r}|} \right)\,
\end{equation*}
and use a scaling argument with \eqref{eq-scaling}. To apply Theorem~\ref{theo:Ah-approx} to $\tebu$ in $B_{r/2}(y) \subset B_{r}$ with $\overline{g}$ and $\overline{G}$ given by \eqref{eq:gGH}, we check \eqref{u-male} and \eqref{Du-male} for $\tebu$. Indeed, scaling and the definition of $\lambda$ give
\begin{equation} \label{eq:scaling-Du}
\mean{B_{r}} |D\tebu| \dx = \frac{1}{\lambda} \mean{B_{r}} |D\teu| \dx \leq \varepsilon\,,
\end{equation}
and hence by Poincar\'e's inequality
\begin{equation*}
\mean{B_{r/2}(y)} |\tebu| \dx \leq 2^{n} \mean{B_{r}} |\tebu| \dx \leq Cr \mean{B_{r}} |D\tebu| \dx \leq C \varepsilon r \leq \frac{r}{2}
\end{equation*}
for every $y \in B_{r/2}(x_0)$, provided $\varepsilon = 1/(2C)$. Note that the dependence of $\delta$ on $\varepsilon$ is removed at this point. Moreover, since $-{\tedv} \bopA(\teDbu) = \btemu$ in $B_{r} \subset \Omega$, we obtain
\begin{equation*}
\left| \mean{B_{r/2}(y)} \bopA(\teDbu) : D{\tevp} \dx \right| \leq \frac{2^{n} \|\tevp\|_{L^{\infty}(B_{r/2}(y), \Rm)}}{g(\lambda)} \frac{|\btemu|(B_{r})}{|B_{r}|} \leq \frac{\delta}{r/2} \|\tevp\|_{L^{\infty}(B_{r/2}(y), \Rm)}
\end{equation*}
for every $y \in B_{r/2}(x_{0})$ and $\tevp \in W_{0}^{1, \overline{G}}(B_{r/2}(y),\Rm) \cap L^{\infty}(B_{r/2}(y),\Rm)$. Thus, by Theorem~\ref{theo:Ah-approx} there exists an $\mathcal{A}$-harmonic map $\tebv \equiv \tebv_{y} \in W^{1, \overline{G}}(B_{r/4}(y),\Rm)$ such that
\begin{equation} \label{eq:scaling-Du-Dv}
\mean{B_{r/4}(y)} \overline{g}^{1+s} (|D\tebu-D\tebv|) \dx \leq \varepsilon\,.
\end{equation}
The Lipschitz estimate for $D\tebv$ from Lemma~\ref{lem:Lip}, Jensen's inequality,  \eqref{eq:scaling-Du} and \eqref{eq:scaling-Du-Dv} show
\begin{equation*}
\begin{split}
\sup_{B_{r/8}(y)} |D\tebv|
&\leq C \mean{B_{r/4}(y)} |D\tebv| \dx \leq C \big(\overline{g}^{1+s}\big)^{-1} \left( \mean{B_{r/4}(y)} \overline{g}^{1+s}(|D\tebu-D\tebv|) \dx \right) + C \mean{B_{r/4}(y)} |D\tebu| \dx \leq C\,.
\end{split}
\end{equation*}
Thus, we have
\begin{equation*}
\mean{B_{r/8}(y)} \overline{g}^{1+s} (|D\tebu|) \dx \leq C \sup_{B_{r/8}(y)} \overline{g}^{1+s}(|D\tebv|) + C \mean{B_{r/4}(y)} \overline{g}^{1+s} (|D\tebu-D\tebv|) \dx \leq C\,.
\end{equation*}
A simple covering argument shows
\begin{equation*}
\mean{B_{r/2}(x_{0})} \overline{g}^{1+s} (|D\tebu|) \dx \leq C\,.
\end{equation*}
By scaling back to $\teu$, we conclude with
\begin{equation*}
\mean{B_{r/2}(x_{0})} {g}^{1+s} (|D\teu|) \dx \leq C {g}^{1+s}(\lambda) \leq C {g}^{1+s}\left( \mean{B_{r}} |D\teu| \dx + g^{-1} \left( \frac{|\temu|(B_{r})}{r^{n-1}} \right)\right)\,,
\end{equation*} 
where $C$ depends only on $\data$ and $s$.
\end{proof}

The following lemma is a reverse H\"older inequality on level sets.

\begin{lem}[Reverse H\"older inequality]
    \label{lem-6.1}  Under Assumption~\ref{ass:G} suppose $\teu\in W^{1,G}(B_r,\Rm)$ is a weak solution to~\eqref{eq:mu} in $B_r=B_r(x_0)$ with $\temu\in C^\infty(B_r,\Rm)$.  Let $s\in [0,\sa)$ be admissible in Theorem~\ref{theo:Ah-approx}. Moreover, assume
\begin{equation}
    \label{smallness}
    20^n c_{\rm h}\left[\mean{B_r}|\teDu|\dx+g^{-1}\left(\frac{|\temu|(B_r)}{r^{n-1}}\right)\right]< t\,,
\end{equation} 
where $c_{\rm h}=c_{\rm h}(\data,s)$ is the constant from Lemma~\ref{lem-5.1}. Then, it holds
\begin{equation*}
    \int_{B_{r/2} \cap \{|\teDu| > t\} } {g}^{1+s}(|\teDu|) \dx \le c_* \frac{{g}^{1+s}(t)}{t} \int_{B_r \cap \{|\teDu| > t/{c_{\ast}}\} } |\teDu| \dx + c_* r {g^s(t)} |\temu|(B_r)\,,
\end{equation*}
where $c_* = c_*(\data, s) > 0$ is a constant.
\end{lem}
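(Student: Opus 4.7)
The plan is a Calder\'on--Zygmund stopping-time argument combined with a Vitali cover, which localizes the mean-reverse H\"older bound of Lemma~\ref{lem-5.1} to the super-level set $\{|\teDu|>t\}$. The two terms on the right-hand side of the claimed inequality correspond to the two possible reasons a stopping ball can exist: either its mean gradient is large, or its normalized measure mass is large.

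For each Lebesgue point $x\in B_{r/2}\cap\{|\teDu|>t\}$, I define the stopping radius
\[
\rho_x:=\sup\Big\{\rho\in(0,r/20]\;:\;\mean{B_\rho(x)}|\teDu|\dx+g^{-1}\Big(\tfrac{|\temu|(B_\rho(x))}{\rho^{n-1}}\Big)\ge \tfrac{t}{M}\Big\}
\]
for a large constant $M=M(\data,s)$ to be fixed. The smallness condition~\eqref{smallness}, combined with the polynomial doubling of $g$ and $g^{-1}$ implied by~\eqref{a:p-1}, ensures that at $\rho=r/20$ the defining quantity is strictly below $t/M$; since $|\teDu(x)|>t$, Lebesgue differentiation forces it to exceed $t/M$ for small $\rho$; and smoothness of $\temu$ together with $|\teDu|\in L^1$ makes it continuous in $\rho$. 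Hence $0<\rho_x<r/20$, equality holds at $\rho_x$, and for all $\rho\in(\rho_x,r/20]$ the quantity stays strictly below $t/M$. In particular, at $\rho=10\rho_x$ the hypotheses of Lemma~\ref{lem-5.1} on $B_{10\rho_x}(x)\subset B_r(x_0)$ are met, and choosing $M\ge c_{\rm h}$ yields
\[
\mean{B_{5\rho_x}(x)}g^{1+s}(|\teDu|)\dx\le g^{1+s}(t).
\]

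Next, Vitali's covering theorem applied to $\{B_{\rho_x}(x)\}_x$ produces a countable disjoint subfamily $\{B_{\rho_i}(x_i)\}$ whose fivefold enlargements cover $B_{r/2}\cap\{|\teDu|>t\}$, so that
\[
\int_{B_{r/2}\cap\{|\teDu|>t\}}g^{1+s}(|\teDu|)\dx\le 5^n\,g^{1+s}(t)\sum_i|B_{\rho_i}|.
\]
I split the indices by which term in the stopping identity at $\rho_i$ dominates. In Case A, where $\mean{B_{\rho_i}}|\teDu|\dx\ge t/(2M)$, the bound $|B_{\rho_i}|\lesssim t^{-1}\int_{B_{\rho_i}}|\teDu|\dx$ combined with the level-splitting
\[
\int_{B_{\rho_i}}|\teDu|\dx\le\tfrac{t}{c_*}|B_{\rho_i}|+\int_{B_{\rho_i}\cap\{|\teDu|>t/c_*\}}|\teDu|\dx,
\]
together with the choice $c_*\ge 4M$, absorbs the low-level portion; disjointness of $\{B_{\rho_i}\}$ and $B_{\rho_i}\subset B_r(x_0)$ then yield the first term of the target inequality. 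In Case B, where $|\temu|(B_{\rho_i})\rho_i^{1-n}\ge g(t/(2M))$, the doubling of $g$ from~\eqref{a:p-1} gives $|B_{\rho_i}|g(t)\lesssim \rho_i\,|\temu|(B_{\rho_i})$; disjointness together with $\rho_i<r$ then produces the second term $c_* r g^s(t)|\temu|(B_r)$. Summing the two contributions concludes the proof.

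The main obstacle is coupling the two concentration mechanisms (gradient and measure) within a single stopping argument: the stopping condition couples them additively, but the desired inequality keeps them as two separate contributions, so the case dichotomy and the precise balancing of $M$ and $c_*$ are essential. Concretely, \eqref{smallness} must be strong enough to force $\rho_x<r/20$ uniformly in $x$ so that $B_{10\rho_x}(x)\subset B_r(x_0)$ and Lemma~\ref{lem-5.1} applies, while the Case~A level-splitting must strictly absorb the low-level part so that only $\{|\teDu|>t/c_*\}$ survives on the right. The Orlicz framework itself introduces no new difficulty beyond invoking the doubling of $g$ and $g^{-1}$ whenever $g(t)$ and $g(t/(2M))$ are to be interchanged with constants depending only on $\data$.
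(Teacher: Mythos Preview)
Your proposal is correct and follows the same exit-time plus Vitali covering strategy as the paper (which in turn follows \cite{KuMi2018}); the only difference is that you stop on the quantity appearing on the right-hand side of Lemma~\ref{lem-5.1} rather than on the $g^{1+s}$-average itself, which is an equivalent bookkeeping choice. One small technical point: your phrase ``in particular, at $\rho=10\rho_x$'' implicitly assumes $10\rho_x\le r/20$, but in the complementary case $\rho_x\in(r/200,r/20)$ the required bound on the input data over $B_{10\rho_x}(x)$ follows directly from~\eqref{smallness} by enlarging to $B_r$ (with the factor $20^n$ and the doubling of $g^{-1}$), so the argument goes through with the same choice of $M$.
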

\begin{proof}
The proof follows along the lines of the proof of \cite[Lemma 6.1]{KuMi2018}.
    Let $t$ be as in \eqref{smallness} and define the level set \[S_t := B_{r/2} \cap \{|\teDu| > t\}\,.\] First, for any $y\in S_t$ by using Lemma~\ref{lem-5.1} and \eqref{a:p} we obtain for any $y \in S_t$
    \begin{align*}
        \big({g}^{1+s})^{-1}\left(\mean{B_{r/20}(y)}{g}^{1+s}(|\teDu|)\dx\right) &\leq c_{\rm h}\left[\mean{B_{r/10}(y)}|\teDu|\dx+g^{-1}\left(\frac{|\temu|(B_{r/10}(y))}{(r/10)^{n-1}}\right)\right]\\
        &\le 10^n c_{\rm h}\left[\mean{B_{r}}|\teDu|\dx+g^{-1}\left(\frac{|\temu|(B_{r})}{r^{n-1}}\right)\right]< t\,,
    \end{align*}
    where we also used the smallness assumption \eqref{smallness} in the last step. As a consequence, for almost every $y \in S_t$ there exists an exit time radius $10r_y \in (0,r/20)$ such that
    \begin{align}
        \big({g}^{1+s}\big)^{-1}\left( \mean{B_{10 r_y}(y)} {g}^{1+s}(|\teDu|) \dx \right) = t\,, \qquad \max_{\vr \in [10 r_y, r/20]}& \big({g}^{1+s}\big)^{-1}\left( \mean{B_{\vr}(y)} {g}^{1+s}(|\teDu|) \dx \right) \le t\,.
    \label{eq:lem-6.1_help1}
    \end{align} 
    Thus, by Lemma~\ref{lem-5.1}, at least one of the following estimates holds true:
    \begin{align*}
        \frac{t}{2} \le c_{\rm h} \mean{B_{2r_y}(y)} |\teDu| \dx, \qquad \frac{t}{2} \le c_{\rm h} g^{-1}\left(\frac{|\temu|(B_{2r_y}(y))}{(2r_y)^{n-1}}\right)\,.
    \end{align*}
By the same argument as in \cite{KuMi2018}, this implies that we have:
 \begin{align}
 \label{eq:lem-6.1_help2}
     |B_{2 r_y}(y)| \le \frac{4 c_{\rm h}}{t} \int_{B_{2 r_y}(y) \cap \{|\teDu| > t/(4 c_{\rm h})\} } |\teDu| \dx + \frac{c_1 r_y |\temu|(B_{2r_y}(y))}{g \left( \frac{t}{2 c_{\rm h}} \right)}
 \end{align}   
 for some constant $c_1 = c_1(n) > 0$. Let us apply the Vitali covering lemma to extract a countable family of disjoint balls $\{B_{2r_{y_j}}(y_j)\}_j$ satisfying 
 \begin{align*}
     S_t \subset \left( \bigcup_{j} B_{10r_{y_j}}(y_j) \right) \cup \text{ negligible set}\,.
 \end{align*}
 We will now apply \eqref{eq:lem-6.1_help1} and \eqref{eq:lem-6.1_help2} to each ball of the family to estimate for any $j$: 
 \begin{align*}
     \int_{B_{10r_{y_j}}(y_j) }  {g}^{1+s}(|\teDu|) \dx &\le 5^n {g}^{1+s}(t) |B_{2r_{y_j}}(y_j)| \\
     &\le c_2 \frac{{g}^{1+s}(t)}{t} \int_{B_{2 r_{y_j}}(y_j) \cap \{|\teDu| > t/(4 c_{\rm h})\} } |\teDu| \dx + c_2 r {g^s(t)} |\temu|(B_{2r_{y_j}}(y_j))\,,
 \end{align*}
 where $c_2 = c_2(\data, s) > 0$ is a constant. The desired result follows by summing the previous estimate over $j$.
\end{proof}

\begin{lem}
    \label{lem-7.1} If $\tebv$ and $\btemu$ are as in \eqref{bar-system}--\eqref{normalized-bar-v-2}, $\tau>3$, and $\vk\geq 1$, then there exists $c=c(\data,\tau,{\vk})>0$ such that
    \begin{equation*}
        \int_{B_{7/8}\cap\{3<|\tebv|\leq \tau\}} \left(|\teDbv|^2+G(|\teDbv|)\right)\dx\leq c\left(\int_{B_1}  |\teDbv|^{\vk}\dx+|\btemu|(B_1)\right)\,.
    \end{equation*}
\end{lem}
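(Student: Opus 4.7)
The plan is to test the weak formulation of \eqref{bar-system} against $\tevp:=\eta^2(T_\tau(\tebv)-T_3(\tebv))$, where $\eta\in C_c^\infty(B_1)$ is a standard cutoff with $\eta\equiv 1$ on $B_{7/8}$ and $|\nabla\eta|\leq c$; this is admissible since $|T_\tau(\tebv)-T_3(\tebv)|\leq\tau-3$, so that $\tevp\in W_0^{1,\overline{G}}(B_1,\Rm)\cap L^\infty(B_1,\Rm)$. Because $D\teell$ is a constant matrix, $\bopA(D\teell)$ is also constant, and integration by parts against the compactly supported $\tevp$ gives $\int_{B_1}\bopA(D\teell):D\tevp\dx=0$. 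Subtracting this identity from the weak formulation yields
\[
\int_{B_1}\bigl(\bopA(\teDbu)-\bopA(D\teell)\bigr):D\tevp\dx=\int_{B_1}\tevp\cdot d\btemu\,,
\]
whose right-hand side is bounded by $\tau\,|\btemu|(B_1)$.

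Next, expand $D\tevp=2\eta\,\nabla\eta\otimes\tew+\eta^2D\tew$ with $\tew:=T_\tau(\tebv)-T_3(\tebv)$. By \eqref{DTk},
\[
D\tew=\teDbv-\tfrac{3}{|\tebv|}(\id-\P(\tebv))\teDbv\quad\text{on }\{3<|\tebv|\leq\tau\}\,,
\]
with $D\tew\equiv 0$ on $\{|\tebv|\leq 3\}$ and $D\tew=\tfrac{\tau-3}{|\tebv|}(\id-\P(\tebv))\teDbv$ on $\{|\tebv|>\tau\}$. Using the integral representation $\bopA(\teDbu)-\bopA(D\teell)=\int_0^1\partial\bopA(D\teell+t\teDbv)\teDbv\dt$ together with the ellipticity \eqref{L-basic-prop} of $\cL$ and the normalization $|D\teell|=1$, one obtains the Orlicz monotonicity bound
\[
(\bopA(\teDbu)-\bopA(D\teell)):\teDbv\ \geq\ c\bigl(|\teDbv|^2+G(|\teDbv|)\bigr)\,,
\]
since $p>2$ forces the shifted Young function associated to the base point $D\teell$ to be comparable to $t^2+G(t)$. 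This supplies the desired positive contribution on the target set $\{3<|\tebv|\leq\tau\}$.

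The remaining terms --- the boundary integral involving $\nabla\eta\otimes\tew$, the $\{|\tebv|>\tau\}$ portion of $\eta^2D\tew$, and the projection correction $\tfrac{3}{|\tebv|}(\id-\P(\tebv))\teDbv$ on $\{3<|\tebv|\leq\tau\}$ --- are all dominated pointwise by $|\bopA(\teDbu)-\bopA(D\teell)|\,|\teDbv|$ times factors of order $\tau$ or $1$. Using $|\teDbu|\leq|\teDbv|+1$ together with \eqref{a:p-1} yields $|\bopA(\teDbu)-\bopA(D\teell)|\lesssim|\teDbv|+|\teDbv|^{q-1}$, so the error integrals are bounded by $C\tau\int_{B_1}(|\teDbv|^2+|\teDbv|^q)\dx$. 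When $\vk\geq q$ this is directly of order $1+\int|\teDbv|^\vk$; for $1\leq\vk<q$, the higher integrability supplied by Lemma~\ref{lem-5.1} applied to \eqref{bar-system} combined with interpolation between $L^\vk$ and $L^{\overline{g}^{1+s}}$ gives $\int|\teDbv|^q\leq c(\int|\teDbv|^\vk+|\btemu|(B_1)+1)$, at the price of a constant depending on $\vk$ and $\tau$. The main obstacle is that the projection correction has coefficient $3/|\tebv|\leq 1$, which is not strictly small, so pointwise absorption into the monotonicity term is not automatic; the resolution is the spread-type inequality $|\bopA(\teDbu)-\bopA(D\teell)|\,|\teDbv|\leq C(\bopA(\teDbu)-\bopA(D\teell)):\teDbv$ (valid because the ellipticity ratio of $\partial\bopA$ is a data-dependent constant bounded by $q-1$), which allows absorption over $\{|\tebv|>6C\}$, while the bounded-measure part $\{3<|\tebv|\leq 6C\}$ is handled by the same higher-integrability argument as above.
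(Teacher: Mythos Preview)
Your test function and the monotonicity lower bound are reasonable, but the error estimation has a genuine gap. The issue is the step where you claim, for $1\leq\vk<q$, that Lemma~\ref{lem-5.1} combined with interpolation gives
\[
\int_{B_1}|\teDbv|^q\dx\leq c\Bigl(\int_{B_1}|\teDbv|^\vk\dx+|\btemu|(B_1)+1\Bigr)\,.
\]
First, the ``$+1$'' is fatal: the lemma requires the bound to be \emph{linear} in $\int|\teDbv|^\vk+|\btemu|(B_1)$, and under \eqref{normalized-bar-v-2} this quantity can be arbitrarily small, so an additive constant cannot be absorbed. Second, even without the ``$+1$'' the interpolation fails to give linear scaling: Lemma~\ref{lem-5.1} only yields a uniform a priori bound $\int\overline g^{1+s}(|\teDbu|)\leq C(\data)$, and interpolating $L^q$ between $L^\vk$ and this higher space produces $\int|\teDbv|^q\lesssim(\int|\teDbv|^\vk)^\alpha$ with $\alpha<1$ whenever $\vk<q$, which blows up relative to $\int|\teDbv|^\vk$ as the latter tends to zero. (There is also no guarantee that $(p-1)(1+s)\geq q$, so $\overline g^{1+s}$ may not dominate $t^q$ at all.) The same problem infects your handling of the bounded strip $\{3<|\tebv|\leq 6C\}$ after absorption.

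What makes the paper's argument work is precisely the ingredient you are missing: the reverse H\"older inequality on level sets (Lemma~\ref{lem-6.1}), which gives
\[
\int_{B_{1/2}\cap\{|\teDbu|>H\}}g(|\teDbu|)\dx\leq c\int_{B_1\cap\{|\teDbu|>H/c_*\}}|\teDbu|\dx+c|\btemu|(B_1)
\]
for large $H=H(\data)$, i.e.\ a bound on the high-gradient part that is \emph{linear} in $\int|\teDbu|$. Combined with $|\teDbu|\leq 2|\teDbv|$ on $\{|\teDbu|>2\}$ and a Chebyshev--Poincar\'e estimate $|\{|\tebv|>1\}|\leq\int|\teDbv|^\vk$ for the low-gradient part, this delivers exactly the scaling the lemma asserts. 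Note also that the paper tests with a $\tebu$-based function $\phi\,\xi(|\tebu|)\tebu$ rather than a $\tebv$-based truncation, which lets the coercivity $\bopA(\teDbu):\teDbu\geq pG(|\teDbu|)$ act directly and avoids the projection-correction absorption issue you flag.
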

\begin{proof}
    Following \cite{KuMi2018} we fix $m>\frac{1}{2(\tau-1)}$ and define\begin{equation*}
        \xi(t):=\min\{1,2\tau/t\}\,\min\{m(t-2)_+,1\}\,.
    \end{equation*}
{We will choose $m$ large.} Let us take $\tebu$ satisfying \eqref{bar-system} and recall the form of the gradient of the vectorial truncation~\eqref{DTk}. Then we can directly compute that\begin{equation}
    \label{gradxiuu}
    D(\xi(|\tebu|)\tebu)=\begin{cases}
        0\,, & |\tebu|\leq 2\,,\\
        m((|\tebu|-2)\id+|\tebu|\P)\teDbu\,, & 2<|\tebu|\leq 2+1/m\,,\\
        \teDbu\,, & 2+1/m<|\tebu|<2\tau\,,\\
        \tfrac{2\tau}{|\tebu|}(\id-\P)\teDbu\,, &|\tebu|\geq 2\tau\,,
    \end{cases}
\end{equation}
    where $\P:=\P(\tebu)$ from in~\eqref{P}. Since $\teDbu:[(\id-\P)\teDbu]\geq 0$, we have that
    \begin{align}
        \teDbu:D(\xi(|\tebu|)\tebu)\mathds{1}_{\{|\tebu|\geq 2\tau\}} &=\tfrac{2\tau}{|\tebu|}\teDbu:[(\id-\P)\teDbu] \mathds{1}_{\{|\tebu|\geq 2\tau\}}\geq 0, \nonumber\\
        \label{on-the-annulus}
        \teDbu:D(\xi(|\tebu|)\tebu) &\geq |\teDbu|^2 \mathds{1}_{\{2+1/m<|\tebu|< 2\tau\}}\,.
    \end{align}
We note that \begin{equation}\label{tensvp-for-testing}
         \tevp:=\phi \,\xi(|\tebu|)\tebu\in W^{1,G}_0(B_1,\Rm)\,,
    \end{equation}
    for $\phi\in C_0^\infty(B_{15/16})$, $0\leq \phi\leq 1$, $\phi\equiv 1$ in $B_{7/8}$, and $|D\phi|\leq 2^8$,
    is an admissible test function for the system~\eqref{bar-system}. By collecting~\eqref{gradxiuu}, \eqref{on-the-annulus}, \eqref{tensvp-for-testing}, \eqref{a:p}, and letting $m\to\infty$, we get\begin{equation}\label{energy-on-annulus}
 p\int_{B_1\cap \{2<|\tebu|<2\tau\}} G(|\teDbu|)\,\phi\dx\leq 2\tau\int_{B_1\cap \{|\tebu|>2\}}g(|\teDbu|)|D\phi|\dx+2\tau|\btemu|(B_1)\,.
    \end{equation}
In order to estimate the right-hand side above we notice that
    \begin{align}
        \int_{B_1\cap \{|\tebu|>2\}}&g(|\teDbu|)|D\phi|\dx\leq
     \|D\phi\|_{L^\infty}\int_{B_{15/16}\cap \{|\teDbu|>H\}} g(|\teDbu|)\dx+g(H){|B_{15/16}\cap \{|\tebu|>2\}|}\,,
    \label{est-1}
    \end{align}
    for $H>1$ to be chosen. In order to estimate the first integral on the right-hand side we will make use of Lemma~\ref{lem-6.1} {with $s=0$}. With this aim we cover $B_{15/16}$ with a finite covering $\{\cB_i\}_{i=1}^{N}$ of open balls with radius $2^{-7}$ where $N=N(n)$ and $\cB_i\cap B_{15/16}$ is not empty. We note that $2\cB_i\subset B_1$ for each $i$. Moreover, on each ball it holds
    \begin{align*}
    {20^n} c_{\rm h}&\left[\mean{2\cB_i}|\teDbu|\dx+g^{-1}\left(\frac{|\btemu|(2\cB_i)}{2^{-6(n-1)}}\right)\right]\leq c \mean{B_1}|\teDbu|\dx+ c g^{-1}(|\btemu|(B_1))\leq c(\data)\leq H\,,
    \end{align*}
    where we can choose $H=H(\data)$ so large that $H\geq 2c_*$ and $c_*,c_{\rm h}$ are constants from Lemma~\ref{lem-6.1} that depend on $\data$ only. We applied~\eqref{normalized-bar-v-2} and~\eqref{Dbu-small} in the above. Consequently,  assumptions of Lemma~\ref{lem-6.1} are satisfied on each $\cB_i$ and we can infer that\begin{align*}
        \int_{B_{15/16}\cap \{|\teDbu|>H\}} g(|\teDbu|)\dx&\leq\sum_{i=1}^N
        \int_{\cB_{i}\cap \{|\teDbu|>H\}} g(|\teDbu|)\dx\\
        &\leq c \sum_{i=1}^N \int_{2\cB_i\cap \{|\teDbu|>H/c_*\}}|\teDbu|\dx+c\sum_{i=1}^N|\btemu|(2\cB_i)\,\\
        &\leq c \int_{B_1\cap \{|\teDbu|>H/c_*\}}|\teDbu|\dx+c\sum_{i=1}^N|\btemu|(B_1)\,,
    \end{align*}
    for $c=c(\data)$. Recalling that $H/c_*\geq 2$  we get that 
    \begin{equation*}
        \int_{B_{15/16}\cap \{|\teDbu|>H\}} g(|\teDbu|)\dx\leq c\,\int_{B_1\cap\{|\teDbu|>2\}}|\teDbu|^{\vk}\dx+c|\btemu|(B_1)\,,
    \end{equation*}
    for $c=c(\data)$ and any $\vk\geq 1$.     Let us estimate the second term on the right-hand side of~\eqref{est-1}. By~\eqref{trivial-inclusions} we know that\[ B_1\cap \{|\tebu|>2\}\subset B_1\cap \{|\tebv|>1\}\,.\] In turn, recalling~\eqref{normalized-bar-v}, Poincar\'e inequality leads to\begin{equation*}
        |B_{15/16}\cap \{|\tebu|>2\}|\leq |B_1\cap \{|\tebv|>1\}|\leq\int_{B_1}|\tebv|^{\vk}\dx\leq\int_{B_1}|\teDbv|^{\vk}\dx\,. 
    \end{equation*}
    Consequently, \eqref{est-1} implies that
    \begin{align*}
        \int_{B_1\cap \{|\tebu|>2\}}g(|\teDbu|)|D\phi|\dx&\leq
     c\,\int_{B_1\cap\{|\teDbu|>2\}}|\teDbu|^{\vk}\dx+c|\btemu|(B_1)+c\,\int_{B_1}|\teDbv|^{\vk}\dx\,\\
        &\leq\,c\,\int_{B_1}|\teDbv|^{\vk}\dx+c|\btemu|(B_1)\,,
    \end{align*}
    where $c=c(\data)$. In the last line we use that $|\teDbu|\leq 2|\teDbv|$ on ${B_1\cap\{|\teDbu|>2\}}$. Applying the content of the last display to~\eqref{energy-on-annulus} we obtain that
    \[ \int_{B_1\cap \{2<|\tebu|<2\tau\}} G(|\teDbu|)\,\phi\dx\leq c\int_{B_1}|\teDbv|^{\vk}\dx+c|\btemu|(B_1)\,,\]
    for $c=c(\data,\tau)\,.$ With the help of~\eqref{trivial-inclusions} and the previous display one can deduce now that
    \begin{align*}
        &\int_{B_{7/8}\cap\{3<|\tebv|\leq \tau\}} \left(|\teDbv|^2+G(|\teDbv|)\right)\dx\\
        &\qquad\leq  \int_{B_{7/8}\cap\{3<|\tebv|\leq \tau\}\cap\{|\teDbu|< 1\}} \left(|\teDbv|^2+G(|\teDbv|)\right)\dx+ \int_{B_{7/8}\cap\{3<|\tebv|\leq \tau\}\cap\{|\teDbu|\geq 1\}} \left(|\teDbv|^2+G(|\teDbv|)\right)\dx\\
        &\qquad\leq  c|B_{7/8}\cap\{3<|\tebv|\leq \tau\}|+ c\int_{B_{7/8}\cap\{2<|\tebu|\leq 2\tau\}} {G(|\teDbu|)}\dx\\
        &\qquad\leq  c\int_{B_1}|\tev|^{\vk}\dx+ c\int_{B_1}|\teDbv|^{\vk}\dx+c|\btemu|(B_1)\,\\
        &\qquad\leq   c\int_{B_1}|\teDbv|^{\vk}\dx+c|\btemu|(B_1)\,,
    \end{align*}
    which completes the proof.
\end{proof}

\begin{lem}
    \label{lem-7.2} Let $\tebv$ and $\btemu$ be as in \eqref{bar-system}--\eqref{normalized-bar-v-2}. Let $\tau>3$. There exist $\gamma_0 \in (0,1)$, $\varkappa_0 \in (1,2)$ depending only on $q$ such that for every $\gamma \in (0,\gamma_0)$ and $\varkappa \in [1,\varkappa_0)$ it holds
    \begin{equation*}
        \int_{B_{1/2}\cap\{|\tebv|\leq \tau\}} \frac{|\teDbv|^2+G(|\teDbv|)}{|\tebv|^\gamma }\dx\leq c\left(\int_{B_1} |\teDbv|^{\vk}\dx+|\btemu|(B_1)\right)\,,
    \end{equation*} 
where $c=c(\data,\tau,\gamma,\varkappa)>0$.
\end{lem}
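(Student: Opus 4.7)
The plan is to split the integration domain at $|\tebv|=3$: on the outer annulus $B_{1/2}\cap\{3<|\tebv|\leq\tau\}$ the singular weight satisfies $|\tebv|^{-\gamma}\leq 3^{-\gamma}\leq 1$, so Lemma~\ref{lem-7.1} (together with the inclusion $B_{1/2}\subset B_{7/8}$) directly yields the required bound. All the remaining work concerns the inner region $B_{1/2}\cap\{|\tebv|\leq 3\}$, on which a weighted Caccioppoli-type estimate must be derived directly from the system~\eqref{bar-system}.

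For this inner region the strategy is to test~\eqref{bar-system} with an admissible vector field of the form $\tevp=\phi\,\eta_\sigma(|\tebv|)\,\tebv$, where $\phi\in C_c^\infty(B_{7/8})$ satisfies $0\leq\phi\leq 1$, $\phi\equiv 1$ on $B_{1/2}$, $|D\phi|\lesssim 1$, and $\eta_\sigma$ is a smooth nonnegative function regularizing the singular weight $|\tebv|^{-\gamma}$ at scale $\sigma\to 0$ and truncated to vanish for $|\tebv|>3$ (in the spirit of the $\xi$ construction used in Lemma~\ref{lem-7.1}). Since $\gamma<1$, the factor $|\tebv|^{1-\gamma}$ is bounded on $\{|\tebv|\leq 3\}$, so $\tevp\in W_0^{1,G}(B_1,\Rm)\cap L^\infty$ is admissible. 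The chain rule produces $D\tevp$ as a sum of a \emph{good} term $\phi\,\eta_\sigma(|\tebv|)\,\teDbv$, a \emph{singular-weight} term proportional to $\phi\,\eta_\sigma'(|\tebv|)\,(\tebv\otimes\tebv)\teDbv/|\tebv|$, and a \emph{boundary} term involving $D\phi$ and the upper cutoff at $|\tebv|=3$.

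After subtracting the divergence-free quantity $\bopA((\teDbu)_{B_1})$, monotonicity of $\bopA$ together with the lower bound in~\eqref{L-basic-prop} and the Orlicz equivalence~\eqref{a:p} converts the good term into a quantity comparable to $\phi\,\eta_\sigma(|\tebv|)(|\teDbv|^2+G(|\teDbv|))$; on the set $\{|\tebu|\leq 4\}\supset\{|\tebv|\leq 3\}$ the shifted operator is uniformly nondegenerate so this comparability is uniform in the base point. The singular-weight term has absolute value bounded by $C(q)\,\gamma\int\phi\,\eta_\sigma(|\tebv|)|\teDbv|^2\dx$ (since $|\eta_\sigma'(t)|\lesssim \gamma\,t^{-1}\eta_\sigma(t)$ away from the regularization scale), and may therefore be absorbed into the good term once $\gamma<\gamma_0$ for some $\gamma_0=\gamma_0(q)\in(0,1)$.

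It remains to bound the boundary term, the measure contribution $\|\tevp\|_{L^\infty}|\btemu|(B_1)$, and the transition zone $\{|\tebv|>3\}$, the latter being handled by Lemma~\ref{lem-7.1}. Using the pointwise bound $|\bopA(\teDbu)|\lesssim 1+|\teDbv|^{q-1}$ on $\{|\tebv|\leq 3\}$ coming from~\eqref{a:p}, together with $|\tevp|\lesssim |\tebv|^{1-\gamma}\lesssim 1$ and Hölder's inequality with exponent dual to $q-1$, one reduces the boundary term to something of the form $c\int_{B_1}|\teDbv|^{\varkappa}\dx+c|\btemu|(B_1)$ for any $\varkappa\in[1,\varkappa_0)$, where $\varkappa_0=\varkappa_0(q)\in(1,2)$ is fixed by the Young-type splitting. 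Sending $\sigma\to 0$ via monotone convergence concludes. The main obstacle is orchestrating the simultaneous absorption of the singular-weight term (controlled by $\gamma$) into the good term and the production of a sub-quadratic $|\teDbv|^\varkappa$ power on the right-hand side; the two thresholds $\gamma_0$ and $\varkappa_0$ in the statement encode precisely this balance, and their dependence only on $q$ reflects that on the bounded set $\{|\tebv|\leq 3\}$ only the upper growth exponent enters the estimates.
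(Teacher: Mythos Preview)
Your overall architecture is sound and, in one respect, even cleaner than the paper's: instead of introducing the singular weight $|\tebv|^{-\gamma}$ via the truncation $T_t(\tebv)$ followed by integration of $t^{-1-\gamma}\,\mathrm{d}t$ (which is how the paper obtains the balance $\tfrac{1}{2\gamma}-\tfrac{q-1}{1-\gamma}>0$), you build it in directly through $\eta_\sigma(|\tebv|)\tebv$. Since $D(\eta(|\tebv|)\tebv)=\eta(|\tebv|)(\id-\gamma\P(\tebv))\teDbv$ for $\eta(t)=t^{-\gamma}$, the ``singular-weight'' term carries an explicit factor $\gamma$, and the ratio of this term to the good monotonicity term is bounded by $\gamma(q-1)$ up to a harmless constant; absorption for $\gamma<\gamma_0(q)$ is therefore legitimate. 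The split at $|\tebv|=3$ and the use of Lemma~\ref{lem-7.1} on the outer annulus are also fine.

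There is, however, a genuine gap in your treatment of the $D\phi$ boundary term. After subtracting $\bopA(D\teell)$ this term is, up to constants,
\[
\int_{(B_{7/8}\setminus B_{1/2})\cap\{|\tebv|\leq 3\}}|\tebv|^{1-\gamma}\,\frac{g(1+|\teDbv|)}{1+|\teDbv|}\,|\teDbv|\dx\,,
\]
and on the set $\{|\teDbv|>H\}$ the integrand behaves like $g(|\teDbv|)$, which for large $|\teDbv|$ is of order $|\teDbv|^{q-1}$. There is no a priori control of $\int g(|\teDbv|)$ or $\int|\teDbv|^{q-1}$ in terms of $\int|\teDbv|^{\varkappa}$ with $\varkappa<2$, and ``H\"older's inequality with exponent dual to $q-1$'' cannot manufacture one. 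Nor can this piece be absorbed into the left-hand side, since $\mathrm{supp}\,D\phi$ is disjoint from $\{\phi=1\}$. The paper closes this gap by splitting at $|\teDbv|=H$ and invoking the reverse H\"older inequality on level sets, Lemma~\ref{lem-6.1}, which yields exactly
\[
\int_{B_{3/4}\cap\{|\teDbu|>H\}}g(|\teDbu|)\dx\leq c\int_{B_1\cap\{|\teDbu|>H/c_*\}}|\teDbu|\dx+c|\btemu|(B_1)\,.
\]
On the complementary set $\{|\teDbv|\leq H\}$ the paper then uses the factor $|\tebv|^{1-\gamma}$ together with H\"older and Poincar\'e, and it is precisely here that the constraint $\varkappa<\varkappa_0=2-\gamma_0$ (so that $(1-\gamma)\varkappa'>\varkappa$) enters. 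Your proposal does not invoke Lemma~\ref{lem-6.1}, and without it the argument does not close.

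A smaller issue: the cutoff of $\eta_\sigma$ near $|\tebv|=3$ produces a further term supported on an inner annulus $\{3-\delta<|\tebv|<3\}$ (not on $\{|\tebv|>3\}$ as you write), carrying $\int G(|\teDbv|)$ there. This again requires either a variant of Lemma~\ref{lem-7.1} on a slightly shifted annulus or the same level-set machinery; it is not automatic.
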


\begin{proof}  For $\tebu$ satisfying \eqref{bar-system} we consider a system
\begin{equation}
    \label{7.22}-\tedv(\bopA(\teDbu) - \bopA(\teell))=\btemu\,.
\end{equation}
Note that $\tebv$ solves a linearized version of \eqref{7.22}, that is \begin{equation*}
        -\tedv(B(x):\teDbv)=\btemu
    \end{equation*}
with \begin{equation*}
        B(x):=\int_0^1  \frac{g(|D\teell+s\teDbv|)}{|D\teell+s\teDbv|} \cL(D\teell+s\teDbv)\ds\,,
    \end{equation*}
    where $\cL$ is given by~\eqref{eq:L}. 
Then by~\eqref{L-basic-prop} and \eqref{a:p-1} we have
    \begin{equation}\label{B-lower-bound}
       \int_0^1 \frac{g(|D\teell+s\teDbv|)}{|D\teell+s\teDbv|}\ds\,|\teDbv|^2 \leq (B(x):\teDbv):\teDbv \leq (q-1) \int_0^1 \frac{g(|D\teell+s\teDbv|)}{|D\teell+s\teDbv|} \ds\,|\teDbv|^2  \,.
    \end{equation}
By Lemma~\ref{lem-g'} we get that \[\int_0^1\frac{g(|D\teell+s\teDbv|)}{|D\teell+s\teDbv|}\ds\approx \frac{g(|D\teell|+|\teDbv|)}{|D\teell|+|\teDbv|}=\frac{g(|\teDbv|+1)}{|\teDbv|+1}\,.\]
Consequently
\begin{equation}\label{B-trapped}
    \frac{1}{c}|\teDbv|^2\leq \left((B(x):\teDbv):\teDbv\right)\frac{|\teDbv|+1}{g(|\teDbv|+1)}\leq c|\teDbv|^2
\end{equation}
for $c=c(\data) > 0$.  By Lemma~\ref{lem-G} and~\eqref{B-trapped} we see that there holds a pointwise estimate
\begin{equation*}
    |\teDbv|^2+G(|\teDbv|)\leq c (B(x):\teDbv):\teDbv
\end{equation*}
for some $c=c(\data) > 0$. In order to prepare for testing~\eqref{7.22}, we define 
\begin{align}
    \label{Phi}&\text{
$\Phi\in C_0^\infty((-6,6))$ such that $\Phi\equiv 1$ on $[-3,3]$ and $|\Phi'|\leq 1$}\,,\\
    &\text{$\phi\in C_0^\infty(B_{3/4})$ such that $0\leq\phi\leq 1,$ $\phi\equiv 1$ in $B_{1/2}$ and $|D\phi|\leq 128$}\,.\nonumber\end{align} 
    
Recall the vectorial truncation operator $T_t$ given by~\eqref{Tk}. We test~\eqref{7.22} with\[\tevp:=\phi\,\Phi(|\tebv|/\tau)T_t(\tebv)\in W^{1,G}_0(B_1,\rn)\,,\]
where $t\in (0,6\tau]$, to get\begin{align*}
    \int_{B_1}\tevp\,\mathrm{d}\btemu=&\int_{B_1\cap\{|\tev|\leq t\}}\left((B(x):\teDbv):\teDbv\right)\Phi(|\tebv|/\tau)\phi\dx\\
    &+t\int_{B_1\cap\{|\tev|> t\}}(B(x):\teDbv):[(\id-\P)\teDbv]\frac{\Phi(|\tebv|/\tau)}{|\tebv|}\phi\dx\\
    &+\frac{1}{\tau}\int_{B_1\cap\{|\tebv|\leq t\}}\left((B(x):\teDbv):(\P\teDbv)\right)\Phi'(|\tebv|/\tau)|\tebv|\phi\dx\\
    &+\frac{t}{\tau}\int_{B_1\cap\{|\tebv|> t\}}\left((B(x):\teDbv):(\P\teDbv)\right)\Phi'(|\tebv|/\tau)\phi\dx\\
    &+ \int_{B_1\cap\{|\tebv|\leq t\}}\left((B(x):\teDbv):(\tebv\otimes D\phi)\right)\Phi(|\tebv|/\tau)\dx\\
    &+ t\int_{B_1\cap\{|\tebv|> t\}}\left((B(x):\teDbv):\left(\frac{\tebv}{|\tebv|}\otimes D\phi\right)\right)\Phi(|\tebv|/\tau)\dx\,,
\end{align*}
where $\P$ is given by~\eqref{P}.
We observe that
    \begin{align*}
        |(B(x):\teDbv):(\id-\P)\teDbv|&\leq  (q-1)\int_0^1  \frac{g(|D\teell+s\teDbv|)}{|D\teell+s\teDbv|}\ds\,|\teDbv|^2.
    \end{align*}
Indeed, we can infer it from  $|(\id-\P)\teDbv|\leq |\teDbv|$, \eqref{eq:L-upper}, and \eqref{L-basic-prop}. 
Moreover, due to~\eqref{B-lower-bound}, by dividing the previous two estimates, we obtain 
\begin{equation*}
    Q:=\sup _{x\in B_1}\left|\frac{(B(x):\teDbv):(\id-\P)\teDbv}{ (B(x):\teDbv):\teDbv}\right|\leq q-1\,.
\end{equation*}
Let us take $\ve\in (0,\tau/2)$, $K\geq 6$, $\gamma\in(0,1)$ to be chosen later. We multiply the above equality by $t^{-1-\gamma}$, using estimates on $B$ from~\eqref{B-trapped}, and integration over $(\ve,K\tau)$ with respect to $t$, we obtain
\begin{align}
    \nonumber\frac{(K\tau)^{1-\gamma}}{1-\gamma}|\btemu|(B_1)\geq &\int_\ve^{K\tau}t^{-1-\gamma}\int_{B_1\cap\{|\tev|\leq t\}}\left[(B(x):\teDbv):\teDbv\right]\Phi(|\tebv|/\tau)\phi\dx\dt\\
    \nonumber&-Q\int_\ve^{K\tau}t^{-\gamma}\int_{B_1\cap\{|\tev|> t\}}\left[(B(x):\teDbv):\teDbv\right]\frac{\Phi(|\tebv|/\tau)}{|\tebv|}\phi\dx\dt\\
    \nonumber&-c\int_0^{K\tau}\frac{t^{-1-\gamma}}{\tau}\int_{B_1\cap\{3\tau<|\tebv|<6\tau\}\cap\{|\tebv|\leq t\}}\frac{g(|\teDbv|+1)}{|\teDbv|+1}|\teDbv|^2\Phi'(|\tebv|/\tau)|\tebv|\phi\dx\dt\\
    \nonumber&-c\int_0^{K\tau}\frac{t^{-\gamma}}{\tau}\int_{B_1\cap\{3\tau<|\tebv|<6\tau\}\cap\{|\tebv|> t\}}\frac{g(|\teDbv|+1)}{|\teDbv|+1}|\teDbv|^2\Phi'(|\tebv|/\tau)\phi\dx\dt\\
    \nonumber&-c \int_0^{K\tau}t^{-1-\gamma}\int_{B_1\cap\{|\tebv|\leq t\}}\frac{g(|\teDbv|+1)}{|\teDbv|+1}|\teDbv|\, |D\phi|\,|\tebv|\,\Phi(|\tebv|/\tau)\dx\dt\\
    \nonumber&-c\int_0^{K\tau}t^{-\gamma}\int_{B_1\cap\{|\tebv|> t\}}\frac{g(|\teDbv|+1)}{|\teDbv|+1}|\teDbv|\,|D\phi|\,\Phi(|\tebv|/\tau)\dx\dt\,\\
    \label{6guys}&=:I_1(\ve)-QI_2(\ve)-I_3-I_4-I_5-I_6\,.
\end{align}
 To get the ultimate goal of Lemma~\ref{lem-7.2} we will prove that
\begin{align}\nonumber
\int_{B_1\cap\{\ve\leq|\tebv|\leq\tau\}}\frac{|\teDbv|^2+G(|\teDbv|)}{|\tebv|^\gamma}\phi\dx& \lesssim I_1(\ve)-QI_2(\ve)\lesssim I_3+I_4+ I_5+I_6+|\btemu|(B_1)\,\\
&\lesssim \int_{B_1}|\teDbv|^\vk \dx+|\btemu|(B_1)\,.\label{aim-of-Lem-4.7}
\end{align}

Our aim is to estimate the terms on the right-hand side of~\eqref{6guys}. Note they are all finite and nonnegative, so we can apply Fubini's theorem. We will exploit~\eqref{Phi} several times. We can estimate\begin{align*}
    I_1(\ve)&=\frac{1}{\gamma}\int_{B_1}\left[\frac{1}{\max\{|\tebv|\,,\ve\}^\gamma}-\frac{1}{K^\gamma\tau^\gamma}\right]\left[(B(x):\teDbv):\teDbv\right]\Phi(|\tebv|/\tau)\phi\dx\\
    &\geq \frac{1}{\gamma}\int_{B_1\cap\{|\tebv|\geq\ve\}}\left[\frac{1}{|\tebv|^\gamma}-\frac{1}{K^\gamma\tau^\gamma}\right]\left[(B(x):\teDbv):\teDbv\right]\Phi(|\tebv|/\tau)\phi\dx\,.
\end{align*}
Since $\Phi(|\tebv|/\tau)=0$ when $|\tebv|\geq 6{\tau}$, we infer that
\[\frac{\Phi(|\tebv|/\tau)}{K^\gamma\tau^\gamma}\leq \frac{6^\gamma}{K^\gamma}\frac{\Phi(|\tebv|/\tau)}{|\tebv|^\gamma}\,.\]
We fix $K:=6\cdot 2^\frac{1}{\gamma}$ and obtain
\begin{align}
    \nonumber I_1(\ve)&\geq \frac{1}{\gamma}\int_{B_1 {\cap \lbrace |\tebv| \geq \varepsilon \rbrace}}\left[1-\frac{6^\gamma}{K^\gamma}\right]\frac{(B(x):\teDbv):\teDbv}{{|\tebv|^\gamma}}\Phi(|\tebv|/\tau)\phi\dx\\
    \label{I1}&\geq \frac{1}{2\gamma}\int_{B_1\cap\{|\tebv|\geq\ve\}} \frac{(B(x):\teDbv):\teDbv}{{|\tebv|^\gamma}}\Phi(|\tebv|/\tau)\phi\dx\,.
\end{align}
As
\[I_2(\ve)=\frac{1}{1-\gamma}\int_{B_1\cap\{|\tebv|\geq\ve\}}\left[\max\{|\tebv|\,,\ve\}^{1-\gamma}-\ve^{1-\gamma}\right]\frac{(B(x):\teDbv):\teDbv}{|\tebv|}\Phi(|\tebv|/\tau)\phi\dx\,,\]
 we have that
\begin{align}
\label{I2}I_2(\ve)&\leq \frac{1}{1-\gamma}\int_{B_1\cap\{|\tebv|\geq\ve\}} \frac{(B(x):\teDbv):\teDbv}{|\tebv|^\gamma}\Phi(|\tebv|/\tau)\phi\dx\,,\\
\label{I3}I_3&\leq \frac{c}{\gamma\tau^{1+\gamma}}\int_{B_1\cap\{3\tau<|\tebv|<6\tau\}}\frac{g(|\teDbv|+1)}{|\teDbv|+1}|\teDbv|^2|\tebv|\phi\dx\,,\\
\label{I4}I_4&\leq \frac{c}{(1-\gamma)\tau}\int_{B_1\cap\{3\tau<|\tebv|<6\tau\}}\frac{g(|\teDbv|+1)}{|\teDbv|+1}|\teDbv|^2|\tebv|^{1-\gamma}\phi\dx\,,\\
I_5&\leq \frac{c}{\gamma} \int_{B_1}\frac{g(|\teDbv|+1)}{|\teDbv|+1}|\teDbv|\,|\tebv|^{1-\gamma}\,\Phi(|\tebv|/\tau)\, |D\phi|\dx\,,\nonumber\\
I_6&\leq \frac{c}{1-\gamma}\int_{B_1}\frac{g(|\teDbv|+1)}{|\teDbv|+1}|\teDbv|\,|\tebv|^{1-\gamma}\,\Phi(|\tebv|/\tau)\,|D\phi|\dx\,.\nonumber
\end{align}
For $\gamma< \gamma_0 := \frac{1}{4 (q-1) + 1}$
it holds \[\frac{1}{2\gamma}-\frac{q-1}{1-\gamma}\geq \frac{1}{4\gamma} = c_{\gamma} >0\,.\]
Then, by~\eqref{6guys},~\eqref{I1} and~\eqref{I2}, we get
\begin{align}\nonumber
    c|\btemu|(B_1)+I_3+I_4+I_5+I_6&\geq I_1(\ve)-QI_2(\ve)\\
    \nonumber&\geq c_{\gamma}\int_{B_1\cap\{|\tebv|\geq\ve\}} \frac{(B(x):\teDbv):\teDbv}{|\tebv|^\gamma}\Phi(|\tebv|/\tau)\phi\dx\,\\
    &\geq  c_{\gamma} c\int_{B_1\cap\{\ve\leq|\tebv|\leq\tau\}}\frac{|\teDbv|^2+G(|\teDbv|)}{|\tebv|^\gamma}\phi\dx\,.\label{full}
\end{align}
On the other hand, by~\eqref{I3}, \eqref{I4}, and Lemma~\ref{lem-7.1} we obtain\begin{align}
\nonumber    I_3+I_4&\leq \frac{c}{\gamma\tau^{\gamma}}\int_{B_1\cap\{3\tau<|\tebv|<6\tau\}}\frac{g(|\teDbv|+1)}{|\teDbv|+1}|\teDbv|^2\phi\dx\\
\nonumber &\leq \frac{c}{\gamma\tau^{\gamma}}\int_{B_1\cap\{3\tau<|\tebv|<6\tau\}}\left(|\teDbv|^2+G(|\teDbv|)\right)\phi\dx\\
 \label{I3+I4}   &\leq \frac{c}{\gamma\tau^{\gamma}}\left(\int_{B_1}  |\teDbv|^\vk \dx+|\btemu|(B_1)\right)\,,
\end{align}
where the last estimate holds true due to Lemma~\ref{lem-7.1} with $6\tau$ instead of $\tau$ and the fact that $\phi$ vanishes
outside $B_{3/4}$. Moreover,\begin{align*}
    I_5+I_6&\leq \frac{c}{\gamma(1-\gamma)}\int_{B_1\cap\{|\tebv|<6\tau\} }\frac{g(|\teDbv|+1)}{|\teDbv|+1}|\teDbv|\,|\tebv|^{1-\gamma}\,|D\phi|\dx\,.
\end{align*}
We split the integral over $\{|\teDbv|>H\}$ and $\{|\teDbv|\leq H\}$ for large enough $H=H(\data)>3$ and estimate separately. Due to~\eqref{trivial-inclusions}, we observe that
\begin{align*}
    \wt{\text{I}}&:=\int_{B_1\cap\{|\tebv|<6\tau\}\cap \{|\teDbv|>H\}}\frac{g(|\teDbv|+1)}{|\teDbv|+1}|\teDbv|\,|\tebv|^{1-\gamma}\,|D\phi|\dx\,\\
    &\leq c\tau^{1-\gamma} \int_{B_1 \cap \{|\teDbv|>H\}}{g(|\teDbv|)}\dx \leq c \int_{B_1 \cap \{|\teDbu|>H-1\}}{g(|\teDbu|)}\dx\,.
\end{align*}
To estimate it further we will apply Lemma~\ref{lem-6.1} {with $s=0$}. Note that~\eqref{smallness} is satisfied due to~\eqref{normalized-bar-v-2} and~\eqref{Dbu-small}. In turn, it holds
\begin{align*}
    \wt {\text{I}}\leq\int_{B_{3/4} \cap \{|\teDbu| > H-1\} } g(|\teDbu|) \dx & \le c \ \int_{B_1 \cap \{|\teDbu| > 2\} } |\teDbu|^{\vk} \dx + c  |\btemu|(B_1)\\
 &\le c \ \int_{B_1 } |\teDbv|^{\vk} \dx + c  |\btemu|(B_1)\,
\end{align*} 
with a constant $c = c(\data) > 0$. Let us assume from now on that $\varkappa \in [1,\varkappa_0)$, where $\varkappa_0 = 2 - \gamma_0$. Then, it holds $(1-\gamma)\varkappa' > \varkappa$ and therefore, using H\"older, and Poincar\'e inequality we deduce 
\begin{align*}
    \wt{\text{II}}&:=\int_{B_1 \cap\{|\tebv|<6\tau\}\cap \{|\teDbv|\leq H\}}\frac{g(|\teDbv|+1)}{|\teDbv|+1}|\teDbv|\,|\tebv|^{1-\gamma}\,|D\phi|\dx\,\\
    &\leq c \frac{g(H+1)}{H+1}\int_{B_1 \cap\{|\tebv|<6\tau\}}|\teDbv|\,|\tebv|^{1-\gamma}\dx\\
    &\leq c \left(\int_{B_1}|\teDbv|^{\vk}\right)^\frac{1}{{\vk}}\left(\int_{B_1\cap\{|\tebv|<6\tau\}}|\tebv|^{(1-\gamma){\vk}'}\dx\right)^\frac{1}{{\vk}'}\\
    &\leq c \left(\int_{B_1}|\teDbv|^{\vk}\right)^\frac{1}{{\vk}}\left(\int_{B_1\cap\{|\tebv|<6\tau\}}|\tebv|^{{\vk}}\dx\right)^\frac{1}{{\vk}'}\\
    &\leq c \left(\int_{B_1}|\teDbv|^{\vk} \right)^{\frac{1}{{\vk}}+\frac{1}{{\vk}'}} = c \int_{B_1} |\teDbv|^{{\vk}}\dx\,,
    \end{align*}
    where $c = c(\data,\gamma,\tau,\varkappa) > 0$ is a constant.
Altogether we have
\begin{align}\label{I5+I6}
    I_5+I_6\leq\frac{c}{\gamma(1-\gamma)}\left[
    \wt{\text{I}}+
    \wt{\text{II}}\right] &\le \frac{c}{\gamma(1-\gamma)}\left[ \int_{B_1 } |\teDbv|^{{\vk}} \dx + |\btemu|(B_1)\right]\,.
\end{align}
Summing up, by the use of~\eqref{full}, \eqref{I3+I4}, and \eqref{I5+I6} in~\eqref{aim-of-Lem-4.7}, we get the claim.
\end{proof}

\begin{lem} \label{lem-7.3}
If $\tebu$, $\tebv$ and $\btemu$ are as in \eqref{bar-system}--\eqref{normalized-bar-v-2}, then there exist constants $t=t(n, p, q)\in(1, \min\lbrace n', q' \rbrace)$, $\delta=\delta(n, p, q)  \in (0, 1)$ and $C=C(\data) > 0$ such that
\begin{equation} \label{eq-g-der}
\int_{B_{1/4}} \left( g''(1+|\teDbu|)|\teDbv|^2\right)^t\dx\leq C\left(\int_{B_{1}} |\teDbv|\dx\right)^{t+\delta-1}\left(\int_{B_{1}} |\teDbv|\dx+|\btemu|(B_1)\right)\,.
\end{equation}
\end{lem}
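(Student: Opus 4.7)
The plan is to upgrade the weighted $L^1$-type bound from Lemma~\ref{lem-7.2} to the unweighted $L^t$-type estimate \eqref{eq-g-der} by a Hölder interpolation argument, using the Sobolev embedding $W^{1,1}(B_1)\hookrightarrow L^{n/(n-1)}(B_1)$ applied to $\tebv$. This is the Orlicz counterpart of the corresponding step in \cite[Lemma~7.3]{KuMi2018}.

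I would first carry out a pointwise reduction of the integrand. Combining the iterated assumptions \eqref{a:p-1} and \eqref{a:p-2} gives $g''(t) \lesssim g(t)/t^2$ for $t>0$. Since $\teDbu = D\teell + \teDbv$ with $|D\teell|=1$ by \eqref{normalized-bar-u}, the quantities $1+|\teDbu|$ and $1+|\teDbv|$ are pointwise comparable, so
\[
g''(1+|\teDbu|)|\teDbv|^2 \;\lesssim\; \frac{g(1+|\teDbv|)}{(1+|\teDbv|)^2}\,|\teDbv|^2 \;\lesssim\; |\teDbv|^2 + G(|\teDbv|),
\]
which is precisely the quantity controlled by Lemma~\ref{lem-7.2}.

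Next, setting $f := g''(1+|\teDbu|)|\teDbv|^2$, I would split
\[
f^t \;=\; \Bigl(\frac{f}{|\tebv|^\gamma}\Bigr)^{\alpha}\cdot f^{\,t-\alpha}\cdot |\tebv|^{\gamma\alpha}
\]
and apply Hölder's inequality with a suitable triple of exponents $(a,b,c)$ satisfying $1/a+1/b+1/c=1$. The first factor is controlled by Lemma~\ref{lem-7.2} with $\varkappa = 1+\delta$ close to $1$; the second factor uses the crude pointwise bound $f \lesssim g(|\teDbv|+1)$ combined with the Gehring-type higher integrability extracted from the reverse Hölder estimate of Lemma~\ref{lem-6.1}, which makes $\int_{B_{1/4}} f^{(t-\alpha)b}\dx$ finite and controlled by $\|\teDbv\|_{L^1}$; the third factor is handled by the Sobolev embedding, which together with~\eqref{normalized-bar-v} yields $\bigl(\int_{B_1} |\tebv|^{\gamma\alpha c}\dx\bigr)^{1/c} \lesssim \|\teDbv\|_{L^1}^{\gamma\alpha}$ whenever $\gamma\alpha c \leq n/(n-1)$. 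Finally, the factor $\int_{B_1}|\teDbv|^{1+\delta}\dx$ produced by Lemma~\ref{lem-7.2} is itself absorbed into $\bigl(\int_{B_1}|\teDbv|\dx\bigr)^{1+\delta}$ using the same higher-integrability input, which is how the exponent $t+\delta-1$ in \eqref{eq-g-der} ultimately arises.

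The main obstacle is the simultaneous choice of parameters: one must pick $\gamma \in (0,\gamma_0)$ and $\delta = \varkappa-1 \in (0,\varkappa_0-1)$ so that Lemma~\ref{lem-7.2} applies; $t < n/(n-1)$ so that the Sobolev integrability of $\tebv$ is strong enough to absorb the power of $|\tebv|$ introduced by the Hölder split; and $t < q/(q-1)$ so that, thanks to the Orlicz growth~\eqref{a:p}, the bound $f^t \lesssim g(|\teDbv|+1)^t$ stays in the range of integrabilities reached by Lemma~\ref{lem-6.1}. Tracking the resulting powers of $\|\teDbv\|_{L^1(B_1)}$ through each Hölder step produces exactly the right-hand side of \eqref{eq-g-der}, with all constants depending only on $\data$.
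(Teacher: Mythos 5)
You have correctly identified the three ingredients the paper's proof also rests on: the weighted bound of Lemma~\ref{lem-7.2}, the level-set reverse H\"older estimate of Lemma~\ref{lem-6.1}, and a Sobolev--Poincar\'e inequality for $\tebv$. But the single three-factor H\"older split over all of $B_{1/4}$ does not close, and one of the steps you rely on is false.

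The decisive problem is the claimed ``absorption'' $\int_{B_1}|\teDbv|^{1+\delta}\dx\lesssim\bigl(\int_{B_1}|\teDbv|\dx\bigr)^{1+\delta}$. Since $|B_1|$ is fixed, Jensen's inequality gives $\mean{B_1}|\teDbv|^{1+\delta}\dx\ge\bigl(\mean{B_1}|\teDbv|\dx\bigr)^{1+\delta}$, so the inequality runs in the opposite direction; taking $|\teDbv|=M\mathds{1}_E$ with $|E|\to 0$ shows the gap can be arbitrarily large, and no Gehring-type higher integrability repairs this (higher integrability would improve $L^{1+s}$ by the \emph{average}, not reverse Jensen). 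In the paper the extra exponent $t+\delta-1$ comes entirely from the Sobolev factor $\int_{B_1}|\tebv|^{n'}\dx\lesssim(\EDbv)^{n'}$ together with the a priori bound $\EDbv\le|B_1|$, while Lemma~\ref{lem-7.2} is invoked with $\varkappa=1$ so that only $\EDbv+|\btemu|(B_1)$ appears, never $\int|\teDbv|^{1+\delta}$. In addition: (a) Lemma~\ref{lem-7.2} controls $\int\bigl(|\teDbv|^2+G(|\teDbv|)\bigr)/|\tebv|^\gamma$ only on $\{|\tebv|\le\tau\}$, and on $\{|\tebv|>\tau\}$ the weight $|\tebv|^{-\gamma}$ is bounded above, so your first H\"older factor is not controlled there --- the paper treats this region separately through $J_3,J_4$ via $|B_{1/4}\cap\{|\tebv|\ge 3\}|\lesssim(\EDbv)^{n'}$; (b) your crude bound $f\lesssim g(1+|\teDbv|)$ degenerates for small $|\teDbv|$ (it is $\approx g(1)>0$ there), so on $\{|\teDbv|\le 1\}$ your second H\"older factor is bounded below by a constant and contributes no power of $\EDbv$. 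The paper avoids this by a further split into $\{|\teDbv|\le H\}$, where $g''(1+|\teDbu|)\le C(\data,H)$ and one keeps the honest factor $|\teDbv|^{2-\varepsilon_1}$, and $\{|\teDbv|>H\}$, where $g''(1+|\teDbu|)\approx g''(|\teDbv|)$ and one uses $g^t\lesssim G^{1-\varepsilon_2/q}$. In short: a four-region decomposition in $(|\tebv|,|\teDbv|)$ with two-factor H\"older and carefully tuned exponents $\varepsilon_1,\varepsilon_2$ is needed, not a single global three-factor split, and the power gain must be harvested from the Sobolev factor rather than from reversing Jensen.
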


\begin{proof}
We start with choosing parameters $\gamma, s, t, \delta, \varepsilon_{1}$ and $\varepsilon_{2}$. Let  
\begin{equation} \label{eq:gamma}
0<\gamma <  \min \left\lbrace n'/(q-1),\gamma_0\right\rbrace\,,
\end{equation}
where $\gamma_0$ is the constant given in Lemma~\ref{lem-7.2}, and let $s \in [0, \sa)$ be admissible in Theorem~\ref{theo:Ah-approx}. We take $t \in (1, \min\lbrace n', q' \rbrace)$ and $\delta \in (0,1)$ so that
\begin{align} \label{eq:t}
1&<t < \min \left\lbrace  {q'  n'}/({\gamma+ n'}), 1+s \right\rbrace\,,\\
\label{eq:t-delta}
0&<\delta \leq 1+ \frac{\gamma }{(\gamma+ n')(n-1)}-t\,.
\end{align}
This is possible because the right-hand side of \eqref{eq:t} is larger than 1 by \eqref{eq:gamma}. We then define
\begin{equation*}
\varepsilon_{1}: = \frac{2\gamma}{\gamma+ n'} \quad\text{and}\quad \varepsilon_{2} := \frac{q\gamma}{\gamma+ n'}\,.
\end{equation*}
Then, we have
\begin{equation*}
\frac{\gamma(2-\varepsilon_{1})}{\varepsilon_{1}} = \frac{\gamma(q-\varepsilon_{2})}{\varepsilon_{2}} =  n'
\end{equation*}
and by \eqref{eq:t} we have
\begin{equation} \label{eq:eps2}
\varepsilon_{2} = q \left( 1-\frac{ n'}{\gamma+ n'} \right) \leq q\left( 1-\frac{t}{q'} \right) = q-(q-1)t\,.
\end{equation}
 Since the function
\begin{equation*}
f(t, s, \delta) := \frac{t}{1+s} +  n'\left( 1- \frac{t}{1+s} \right) - t -\delta
\end{equation*}
is decreasing in $t$ and $\delta$, and satisfies
\begin{equation*}
\lim_{t \searrow 1, \delta \searrow 0} f(t, s, \delta) = ( n'-1) \left( 1-\frac{1}{1+s}\right) > 0\,,
\end{equation*}
we may have $f \geq 0$, or equivalently,
\begin{equation} \label{eq:s02}
t+\delta \leq \frac{t}{1+s} +  n' \left( 1-\frac{t}{1+s} \right)\,,
\end{equation}
by assuming that $t$ and $\delta$ have been chosen sufficiently close to 1 and 0, respectively. We may assume that
\begin{equation}\label{eq:st}
s>t-1
\end{equation}
since we chose $t$ sufficiently close to 1.

Let us now set
\begin{equation*}
\EDbv:= \int_{B_{1}} |\teDbv| \dx \leq |B_{1}|\,,
\end{equation*}
and prove \eqref{eq-g-der} with $t$ and $\delta$ chosen as above. We split the integral on the left-hand side of \eqref{eq-g-der} as
\begin{equation*}
\begin{split}
\int_{B_{1/4}} \left( g''(1+|\teDbu|) |\teDbv|^{2} \right)^{t} \dx
&= \int_{B_{1/4} \cap \lbrace |\tebv| < 3 \rbrace \cap \lbrace |\teDbv| \leq H \rbrace} \left( g''(1+|\teDbu|) |\teDbv|^{2} \right)^{t} \dx \\
&\quad+ \int_{B_{1/4} \cap \lbrace |\tebv| < 3 \rbrace \cap \lbrace |\teDbv| > H \rbrace} \left( g''(1+|\teDbu|) |\teDbv|^{2} \right)^{t} \dx \\
&\quad+ \int_{B_{1/4} \cap \lbrace |\tebv| \geq 3 \rbrace \cap \lbrace |\teDbv| \leq H \rbrace} \left( g''(1+|\teDbu|) |\teDbv|^{2} \right)^{t} \dx \\
&\quad+ \int_{B_{1/4} \cap \lbrace |\tebv| \geq 3 \rbrace \cap \lbrace |\teDbv| > H \rbrace} \left( g''(1+|\teDbu|) |\teDbv|^{2} \right)^{t} \dx \\
&= J_{1} + J_{2} + J_{3} + J_{4}\,,
\end{split}
\end{equation*}
where $H \geq 3$ is a constant to be fixed later. It is enough to show that there exists a constant $C > 0$ such that
\begin{equation*}
J_{1} + J_{2} + J_{3} + J_{4} \leq C (\EDbv)^{t+\delta} (1+(\EDbv)^{-1} | \btemu(B_{1})|)\,.
\end{equation*} 
Let us first estimate $J_{1}$ and $J_{2}$. If $|\teDbv| \leq H$, then we have
\begin{equation*}
1 \leq 1+|\teDbu| \leq 1+|\teDbv|+|\teDbv-\teDbu| \leq 1+H+|D\teell| = 2+H\,.
\end{equation*}
In this case,  using \eqref{a:p-2}, we obtain
\begin{equation} \label{eq:J1-g}
g''(1+|\teDbu|) \leq \frac{q-2}{p-2} (1+|\teDbu|)^{q-3} g''(1) \leq C
\end{equation}
for some $C = C(p, q, g''(1), H) > 0$. This leads us to
\begin{equation*}
J_{1} \leq C \int_{B_{1/4} \cap \lbrace |\tebv| < 3 \rbrace \cap \lbrace |\teDbv| \leq H \rbrace} |\teDbv|^{2t} \dx\,.
\end{equation*}
Moreover, since $|\teDbv|^{2t} \leq H^{2t-2+\varepsilon_{1}} |\teDbv|^{2-\varepsilon_{1}}$, we have
\begin{equation} \label{eq:J1}
J_{1} \leq C \int_{B_{1/4} \cap \lbrace |\tebv| < 3 \rbrace} |\teDbv|^{2-\varepsilon_{1}} \dx\,.
\end{equation}
For $J_{2}$, we observe the following: if $|\teDbv| > H$, then we have
\begin{equation*}
|\teDbv| \leq |\teDbv-\teDbu|+|\teDbu| \leq 1+|\teDbu| \leq 2+|\teDbv| < 2|\teDbv|\,,
\end{equation*}
and hence, by~\eqref{a:p-2}, it holds
\begin{align*}
g''(1+|\teDbu|)= g''\left( \frac{1+|\teDbu|}{|\teDbv|} |\teDbv| \right) \leq \frac{q-2}{p-2} \left( \frac{1+|\teDbu|}{|\teDbv|} \right)^{q-3} g''(|\teDbv|) \leq \frac{q-2}{p-2} \max \lbrace 1, 2^{q-3} \rbrace g''(|\teDbv|)\,.
\end{align*}
Also by \eqref{a:p-2}, we obtain
\begin{equation*}
J_{2} \leq C \int_{B_{1/4} \cap \lbrace |\tebv| < 3 \rbrace \cap \lbrace |\teDbv| > H \rbrace} g^{t}(|\teDbv|) \dx\,.
\end{equation*}
Note that by \eqref{a:p} and \eqref{eq:eps2}, we know that for $a \geq 1$ it holds
\begin{equation*}
\frac{g^{t}(a)}{G^{1-\varepsilon_{2}/q}(a)} \leq q^{t} \frac{G^{t-1+\varepsilon_{2}/q}(a)}{a^{t}} \leq q^{t} G^{t-1+\varepsilon_{2}/q}(1) a^{q(t-1+\varepsilon_{2}/q)-t} \leq C \,.
\end{equation*}
 Therefore, we have
\begin{equation} \label{eq:J2}
J_{2} \leq C \int_{B_{1/4} \cap \lbrace |\tebv| < 3 \rbrace} G^{1-\varepsilon_{2}/q}(|\teDbv|) \dx\,.
\end{equation}

It follows from \eqref{eq:J1}, \eqref{eq:J2} and H\"older's inequality that
\begin{align*}
J_{1} &\leq C \left( \int_{B_{1/4} \cap \lbrace |\tebv| < 3 \rbrace} \frac{|\teDbv|^{2}}{|\tebv|^{\gamma}} \dx \right)^{1-\varepsilon_1/2} \left( \int_{B_{1}} |\tebv|^{\gamma(2-\varepsilon_1)/\varepsilon_1} \dx \right)^{\varepsilon_1/2}\,, \\
J_{2} &\leq C \left( \int_{B_{1/4} \cap \lbrace |\tebv| < 3 \rbrace} \frac{G(|\teDbv|)}{|\tebv|^{\gamma}} \dx \right)^{1-\varepsilon_2/q} \left( \int_{B_{1}} |\tebv|^{\gamma(q-\varepsilon_2)/\varepsilon_2} \dx \right)^{\varepsilon_2/q}.
\end{align*}
By using Lemma~\ref{lem-7.2} and the Sobolev--Poincar\'e inequality, we estimate
\begin{align*}
J_{1}+J_{2}
&\leq C (\EDbv+| \btemu|(B_{1}))^{1-\gamma/(\gamma+ n')} \left( \int_{B_{1}} |\tebv|^{ n'} \dx \right)^{\gamma/(\gamma+ n')} \\
&\leq C (\EDbv+| \btemu|(B_{1}))^{1-\gamma/(\gamma+ n')} (\EDbv)^{ n' \gamma/(\gamma+ n')} \\
&= C (1+(\EDbv)^{-1}| \btemu|(B_{1}))^{1-\gamma/(\gamma+ n')} (\EDbv)^{1+( n' -1)\gamma/(\gamma+ n')}\,.
\end{align*}
Using $\EDbv \leq |B_{1}|$ and \eqref{eq:t-delta}, we arrive at
\begin{equation*}
J_{1} + J_{2} \leq C (\EDbv)^{t+\delta} (1+(\EDbv)^{-1}| \btemu|(B_{1}))\,.
\end{equation*}

For $J_{3}$, we use \eqref{eq:J1-g}, $|\teDbv| \leq H$ and
\begin{equation} \label{eq:E}
|B_{1/4} \cap \lbrace |\tebv| \geq 3 \rbrace| \leq \frac{1}{3^{ n'}} \int_{B_{1/4}} |\tebv|^{ n'} \dx \leq C \left( \int_{B_{1}} |\teDbv| \dx \right)^{ n'} = C (\EDbv)^{ n'}
\end{equation}
to obtain
\begin{equation*}
J_{3} \leq C (\EDbv)^{ n'} \leq C (\EDbv)^{t+\delta} (1+(\EDbv)^{-1}| \btemu|(B_{1}))\,,
\end{equation*}
where we used $t+\delta \leq  n'$, which follows from \eqref{eq:t-delta}.

For $J_{4}$, we proceed as in the case of $J_{2}$: recalling \eqref{eq:st}, we use H\"older's inequality to obtain
\begin{align*}
J_{4}
&\leq C \int_{B_{1/4} \cap \lbrace |\tebv| \geq 3 \rbrace \cap \lbrace |\teDbv| > H \rbrace} g^{t}(|\teDbv|) \dx \\
&\leq C |B_{1/4} \cap \lbrace |\tebv| \geq 3 \rbrace \cap \lbrace |\teDbv| > H \rbrace|^{1-t/(1+s)} \left( \int_{B_{1/4} \cap \lbrace |\tebv| \geq 3 \rbrace \cap \lbrace |\teDbv|>H \rbrace} g^{1+s}(|\teDbv|) \dx \right)^{t/(1+s)}.
\end{align*}
Thus, it follows from \eqref{eq:E} that
\begin{equation*}
J_{4} \leq C (\EDbv)^{ n'(1-t/(1+s))} \left( \int_{B_{1/4} \cap \lbrace |\tebv| \geq 3 \rbrace \cap \lbrace |\teDbv|>H \rbrace} {g}^{1+s}(|\teDbv|) \dx \right)^{t/(1+s)}.
\end{equation*}
Let $c_{h}$ be the constant given in Lemma~\ref{lem-5.1} and set $H := 20^{n} 2c_{h}$, then
\begin{equation*}
H \geq 20^{n} c_{h} \left( \mean{B_{1}} |\teDbv| \dx + g^{-1}(| \btemu|(B_{1})) \right)\,.
\end{equation*}
By Lemma~\ref{lem-6.1} we obtain
\begin{equation*}
\int_{B_{1/4} \cap \lbrace |\teDbv| > H \rbrace} {g}^{1+s}(|\teDbv|) \dx \leq C (\EDbv+| \btemu|(B_{1}))\,,
\end{equation*}
and therefore
\begin{equation*}
J_{4} \leq C (\EDbv)^{ n'(1-t/(1+s))+t/(1+s)} (1+(\EDbv)^{-1} | \btemu|(B_{1}))^{t/(1+s)}.
\end{equation*}
By using \eqref{eq:s02} and recalling \eqref{eq:st}, we conclude
\begin{equation*}
J_{4} \leq C (\EDbv)^{t+\delta} (1+(\EDbv)^{-1}| \btemu|(B_{1}))\,.
\end{equation*}
\end{proof}

\begin{proof} [Proof of Proposition~\ref{prop-7.1}]
Let $t, \delta$ be given as in Lemma~\ref{lem-7.3}.    We look for a constant $\theta_{\mathrm{nd}}$ of the form $\theta_{\mathrm{nd}}:= (\varepsilon/H)^{t/\delta}$, that is, we assume
\begin{align} \label{eq:7.1-assumption1}
\mean{B_{r}} |D\teu - (D\teu)_{B_{r}}| \dx &\leq \left( \frac{\varepsilon}{H} \right)^{t/\delta} |(D\teu)_{B_{r}}|\,,\\
\label{eq:7.1-assumption2}
\frac{|\temu|(B_{r})}{r^{n-1}} &\leq \left( \frac{\varepsilon}{H} \right)^{t/\delta} \frac{g(|(D\teu)_{B_{r}}|)}{|(D\teu)_{B_{r}}|} \mean{B_{r}} |D\teu-(D\teu)_{B_{r}}| \dx\,.
\end{align}
We begin by rescaling
\begin{equation*}
\tebu(x) := \frac{\teu(x_{0}+rx)-(\teu)_{B_{r}}}{r |(D\teu)_{B_{r}}|} \qquad\text{and}\qquad \btemu(x) := \frac{r \temu(x_{0}+rx)}{g(|(D\teu)_{B_{r}}|)}\,.
\end{equation*}
Then, $\tebu$ solves the system
\begin{equation*}
-\tedv\left( \frac{\overline{g}(|D\tebu|)}{|D\tebu|} D\tebu \right) = \btemu \quad\text{in } B_{1}\,,
\end{equation*}
where
\begin{equation*}
\overline{g}(t) := \overline{g}_{|(D\teu)_{B_{r}}|}(t) \quad\text{and}\quad \overline{G}(t) := \overline{G}_{|(D\teu)_{B_{r}}|}(t)
\end{equation*}
are defined as in~\eqref{eq:gGH}. Moreover, we have
\begin{equation*}
|(D\tebu)_{B_{1}}| = 1\,, \quad (\tebu)_{B_{1}} = 0\,, \quad \text{and}\quad \overline{\cL}((D\tebu)_{B_{1}}) = \cL((D\teu)_{B_{r}})\,,
\end{equation*}
where $\overline{\cL}$ is defined by \eqref{eq:L} with $g$ replaced by $\overline{g}$. The assumptions \eqref{eq:7.1-assumption1} and \eqref{eq:7.1-assumption2} read
\begin{align}
\wt E:= \mean{B_{1}} |D\tebu - (D\tebu)_{B_{1}}| \dx &\leq \left( \frac{\varepsilon}{H} \right)^{t/\delta}\nonumber,\\
|\btemu|(B_{1}) \leq \left( \frac{\varepsilon}{H} \right)^{t/\delta}\wt E &\leq 1\,.\label{meas-ex-1}
\end{align} 
We next define the maps $\teell: = (\ell^{\alpha})_{1\leq \alpha \leq m}$ as
\begin{equation*}
\ell^{\alpha}(x) := \langle (D\tebu^{\alpha})_{B_{1}}, x \rangle\,,
\end{equation*}
and $\tebv = {\tebu}-\teell$. Then $|D\teell| = |(D\tebu)_{B_{1}}| = 1$ and
\begin{equation*}
\wt E = \EDbv= \mean{B_{1}} |D\tev| \dx \leq \left( \frac{\varepsilon}{H} \right)^{t/\delta} \leq 1\,.
\end{equation*}
Let us now define $\overline{\teh} \in W^{1, 2}(B_{1/4})$ as the solution to
\begin{equation*}
\begin{cases}
-\tedv(\overline{A}: D\overline{\teh}) = 0 &\text{in } B_{1/4}\,, \\
\overline{\teh} = \tebu &\text{on } \partial B_{1/4}\,,
\end{cases}
\end{equation*}
where \[\overline{A} := \overline{\cL}((D\tebu)_{B_{1}}) = \cL((D\teu)_{B_{r}})\,.\] Since $D\teell = (D\tebu)_{B_{1}}$ and $|D\teell|=1$, we have
\begin{equation}\label{eq-linear-sys}
\begin{split}
\tedv(\overline{A}: (D\overline{\teh}-D\tebu))
&= -\tedv(\overline{A}: D\tebu) +\tedv\left( \frac{\overline{g}(|D\tebu|)}{|D\tebu|} D\tebu \right) + \btemu \\
&=\tedv\left( \frac{\overline{g}(|D\tebu|)}{|D\tebu|} D\tebu - \frac{\overline{g}(|D\teell|)}{|D\teell|} D\teell - \frac{\overline{g}(|D\teell|)}
{|D\teell|} \overline{A}: (D\tebu-D\teell) \right) + \btemu \\
&=:\tedv \, \teW + \btemu\,.
\end{split}
\end{equation}
Since by~\eqref{partial-2-A-bound} we have that $|\partial^{2}\opA(\texi)| \leq cg''(|\texi|)$, using  Lemma~\ref{lem-g''}, and $|D\teell|=1$, we obtain
\begin{align*} 
|\teW| &=\left| \int_{0}^{1} (\partial \opA(D\teell + \tau(\teDbu-D\teell)) - \partial \opA(D\teell)) \dtau : (\teDbu - D\teell)\right|\\
&\leq \int_0^1|\partial \opA (D\teell + \tau(D\tebu-D\teell)) - \partial \opA(D\teell)|\dtau\,|D\tebu-D\teell|\\
&\leq \int_0^1\int_{0}^{1} |\partial^{2} \opA(D\teell + \tau s (D\tebu - D\teell))| \ds\dtau\,|D\tebu-D\teell|^2 \\
&\leq c \int_0^1 \int_{0}^{1} g''(|D\teell + \tau s (D\tebu - D\teell)|) \ds \dtau\, |D\tebu-D\teell|^2 \\
&\leq c g''(|D\teell|+|D\tebu|) |D\tebu-D\teell|^2 \,.
\end{align*} 
Hence we estimate
\begin{equation*}
|\teW| \leq c g''(1+|D\tebu|) |D\tev|^{2}.
\end{equation*}
By the Calder\'on--Zygmund theory\footnote{The estimate \eqref{eq-CZ} when $\teW$ does not vanish and $\temu=0$ follows from the result in \cite{DM93,Ste70} and a standard duality argument \cite{BBDL22}. On the other hand, its counterpart for scalar problems when $\teW=0$ and $\temu$ is a measure can be found in \cite{BG}. The proof in \cite{BG} easily generalizes to the vectorial case. The full estimate \eqref{eq-CZ} is obtained by combining these two results using the linearity of  \eqref{eq-linear-sys}. See also \cite{DFFZ13}.}, there exists a constant $c>0$ such that
\begin{equation}\label{eq-CZ}
\left( \mean{B_{1/4}} |D\tebu - D\overline{\teh}|^{t} \dx \right)^{1/t} \leq c \left( \mean{B_{1/4}} |\teW|^{t} \dx \right)^{1/t} + c |\btemu|(B_{1})\,.
\end{equation}
By Lemma~\ref{lem-7.3}, we have
\begin{equation*}
\begin{split}
\mean{B_{1/4}} |\teW|^{t} \dx
&\leq c \left( \int_{B_{1}} |D\tev| \dx \right)^{t+\delta-1} \left( \int_{B_{1}} |D\tev| \dx + |\btemu(B_{1})| \right) \leq c \wt E^{t+\delta} \leq c \left( \frac{\varepsilon}{H} \right)^{t} \wt E^{t}\,,
\end{split}
\end{equation*}
which in the view of the previous display and~\eqref{meas-ex-1} yields
\begin{equation*}
\left( \mean{B_{1/4}} |D\tebu - D\overline{\teh}|^{t} \dx \right)^{1/t} \leq c \left( \frac{\varepsilon}{H} \right) \wt E + c \left( \frac{\varepsilon}{H} \right)^{t/\delta} \wt E \leq c \left( \frac{\varepsilon}{H} \right) \wt E\,.
\end{equation*}
Scaling back to original solutions, setting also
\begin{equation*}
\teh(x) = r \,|(D\teu)_{B_{r}}| \,\overline{\teh}\left( \frac{x-x_{0}}{r} \right), \quad x \in B_{r/4}(x_{0})\,,
\end{equation*}
we arrive at
\begin{equation*}
\left( \mean{B_{r/4}} |D\teu - D\teh|^{t} \dx \right)^{1/t} \leq c \frac{\varepsilon}{H} \mean{B_{r}} |D\teu - (D\teu)_{B_{r}}| \dx\,.
\end{equation*}
Hence, choosing $H:=c$ finishes the proof.
\end{proof}

\section{Excess decay}\label{sec:SOLA}

The goal of this section is to establish an excess decay estimate, from which the potential estimate Theorem \ref{theo:pointwise} follows by standard arguments. The main results of this section are Lemmas \ref{lem-8.4} and \ref{lem-8.5}.

\subsection{Auxiliary results}

We make use of the classical estimates for gradients of $A$-harmonic maps.
\begin{prop}[Chapter 10 in \cite{giusti}]\label{prop:osc-Ah} Suppose that $\teh$  is  $A$-harmonic in $B_r \subset \Omega$, where $A$ is uniformly elliptic, i.e., for some $c_A\geq 1$ it holds
\[c_A^{-1}|\texi|^2\leq (A:\texi):\texi\leq c_A|\texi|^2\qquad\forall \texi\in\rnm\,.\] Then there exists a constant $c_{\mathrm{hol},A} > 0$ depending only on $n,m,c_A$ such that
\begin{align*}
    \osc_{B_{\delta r}} D\teh \le c_{\mathrm{hol},A} \delta \mean{B_r} |D\teh - (D\teh)_{Br}| \dx \quad \forall \delta \in (0,1/2]\,.
 \end{align*}
\end{prop}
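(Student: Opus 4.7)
The plan is to exploit that $A$ has constant entries and is uniformly elliptic, so that every $A$-harmonic map is $C^\infty$ and satisfies \emph{a priori} interior bounds on all its derivatives. Once a Lipschitz estimate for $D\teh$ on $B_{r/2}$ is in hand, the oscillation bound on $B_{\delta r}$ will follow from the mean value theorem.

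First I will reduce to a map with vanishing gradient average by subtracting the affine correction $\langle(D\teh)_{B_r},x-x_0\rangle$ from $\teh$; the resulting map $\wt\teh$ remains $A$-harmonic since affine maps lie in the kernel of $-\tedv(A:D\cdot)$, and it satisfies $(D\wt\teh)_{B_r}=0$. Proving
\begin{equation*}
\osc_{B_{\delta r}} D\wt\teh \le c_{\mathrm{hol},A}\,\delta \mean{B_r}|D\wt\teh|\dx
\end{equation*}
is thus equivalent to the stated claim.

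The core of the argument is the following interior regularity estimate. Because $A$ has constant entries, differentiating the equation componentwise shows that each $\partial_k\wt\teh$, and hence every entry of $D^2\wt\teh$, is again $A$-harmonic. Combining a Caccioppoli inequality on concentric shrinking balls (immediate in the constant coefficient case from testing with a smooth cutoff) with Sobolev embedding and iteration on higher derivatives yields
\begin{equation*}
\sup_{B_{r/2}}|D^2\wt\teh| \le \frac{c}{r}\mean{B_r}|D\wt\teh|\dx,
\end{equation*}
with $c$ depending only on $n,m$ and the ellipticity constant $c_A$. This is precisely the content of Chapter~10 of~\cite{giusti}, applied to the linear system satisfied by $D\wt\teh$.

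To conclude, for any $x,y\in B_{\delta r}(x_0)$ with $\delta\in(0,1/2]$ the segment joining them lies in $B_{r/2}(x_0)$, so the mean value theorem together with the previous display gives
\begin{equation*}
|D\wt\teh(x)-D\wt\teh(y)| \le |x-y|\sup_{B_{r/2}}|D^2\wt\teh| \le 2\delta r\cdot\frac{c}{r}\mean{B_r}|D\wt\teh|\dx,
\end{equation*}
and taking the supremum over $x,y\in B_{\delta r}$ yields the claim with $c_{\mathrm{hol},A}:=2c$. The main technical point, mild but worth flagging, is producing the $L^\infty$ bound on $D^2\wt\teh$ in the vectorial setting without recourse to a maximum principle; for constant coefficient systems this is classical, proceeds via difference quotients and energy estimates, and yields a constant depending only on $n,m,c_A$.
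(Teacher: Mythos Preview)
The paper does not supply its own proof of this proposition: it is recorded as a classical fact with a direct citation to Chapter~10 of Giusti's monograph, and is invoked as a black box (in the proof of Lemma~\ref{lem8.2}). Your sketch is a correct and standard derivation of exactly this estimate for constant-coefficient linear elliptic systems---subtracting the affine part, using that derivatives of solutions are again solutions so that Caccioppoli plus Sobolev iteration yields an interior $L^\infty$ bound on $D^2\wt\teh$ in terms of $r^{-1}\fint_{B_r}|D\wt\teh|$, and concluding via the mean value theorem. This is precisely the line of argument developed in the cited reference, so there is no meaningful divergence to compare. Your caveat about obtaining the $L^\infty$--$L^1$ bound in the vectorial setting without a maximum principle is well placed; it goes through by iterating Caccioppoli across a chain of intermediate balls together with Sobolev embedding, and the resulting constant depends only on $n,m,c_A$ as claimed.
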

We would like to stress that the regularity of $\opA$-harmonic maps with $\opA$ as in~\eqref{opA:def} is an object of intensive study \cite{DiEt,DSV1,DSV2,DiLeStVe}. Inspired by these contributions we infer the following couterpart of the above proposition.
\begin{lem}\label{lem:grad-osc} 
Suppose Assumption~\ref{ass:G} is satisfied and $\tev\in W^{1,G}(\Omega,\Rm)$ is $\opA$-harmonic in $B_r \subset \Omega$ for $\opA$ given by~\eqref{opA:def}. Then there exist constants $c_{\mathrm{hol},G} > 0$, $\alpha_{\mathrm{hol}} \in (0,1)$, and $\sigma_0 \in (0,1/2)$ depending only on $n,m,p,q$ such that
\begin{align*}
    \osc_{B_{\delta r}} D\tev \le c_{\mathrm{hol},G} \delta^{\alpha_{\mathrm{hol}}} \mean{B_r} |\teDv - (\teDv)_{Br}| \dx \quad \forall \delta \in (0,\sigma_0]\,.
 \end{align*}
\end{lem}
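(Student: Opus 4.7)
The plan is to establish the Campanato-type excess decay by a dichotomy argument paralleling the degenerate/nondegenerate split of Section~\ref{sec:comparison}, applied here in the homogeneous setting $\temu\equiv 0$. By scaling I may assume $r=1$ and write $E := \mean{B_1} |\teDv - (\teDv)_{B_1}| \dx$. I fix a large threshold $K \ge 1$ and split into cases.

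\textbf{Nondegenerate regime.} If $|(\teDv)_{B_1}| \ge K E$, I would freeze coefficients and let $\teh \in W^{1,2}(B_{1/4},\Rm)$ solve the constant-coefficient linear system $-\tedv(A : D\teh) = 0$ in $B_{1/4}$ with $\teh = \tev$ on $\partial B_{1/4}$, where $A := \cL((\teDv)_{B_1})$. By~\eqref{L-basic-prop} and~\eqref{eq:L-upper}, $A$ is uniformly elliptic with constants depending only on $p,q$. Running the same linearization computation as in the proof of Proposition~\ref{prop-7.1}, with $\btemu \equiv 0$, the error term $\teW$ satisfies $|\teW| \lesssim g''(1+|\teDv|)|\teDv - (\teDv)_{B_1}|^2$, and Calder\'on--Zygmund together with Lemma~\ref{lem-7.3} (applied to $\tev$ in place of $\tebv$, with vanishing measure) yields
\begin{equation*}
\mean{B_{1/4}} |\teDv - D\teh| \dx \lesssim E^{1+\delta/t} \lesssim K^{-\beta} E
\end{equation*}
for some $\beta = \beta(p,q) > 0$, since in this regime $E \lesssim K^{-1} |(\teDv)_{B_1}|$ and scaling is normalized. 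Proposition~\ref{prop:osc-Ah} applied to $\teh$ gives $\osc_{B_\sigma} D\teh \lesssim \sigma E$ for $\sigma \in (0,1/8]$, and the triangle inequality yields
\begin{equation*}
\osc_{B_\sigma} \teDv \le c_1\big(\sigma + K^{-\beta}\big) E.
\end{equation*}

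\textbf{Degenerate regime.} If $|(\teDv)_{B_1}| < K E$, then $\mean{B_1}|\teDv| \dx \le (1+K) E$. Combining the Lipschitz estimate of Lemma~\ref{lem:Lip} with the known interior $C^{1,\alpha_0}$ regularity for $\opA$-harmonic maps under Assumption~\ref{ass:G} established in~\cite{DSV1, DSV2, DiLeStVe} (which provides $\alpha_0\in(0,1)$ and $c>0$ with $\osc_{B_\sigma} \teDv \le c\,\sigma^{\alpha_0}\sup_{B_{1/2}}|\teDv|$ for $\sigma\in(0,1/2]$), I would deduce
\begin{equation*}
\osc_{B_\sigma} \teDv \le c_2 K \sigma^{\alpha_0} E.
\end{equation*}
Setting $\alpha_{\mathrm{hol}} := \alpha_0/2$ and combining both regimes, for every $\sigma \in (0,1/8]$ one has $\osc_{B_\sigma} \teDv \le c_3(\sigma^{\alpha_0} K + \sigma + K^{-\beta})\, E$. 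Picking $K$ large enough that $c_3 K^{-\beta} \le \tfrac{1}{3}\sigma_0^{\alpha_{\mathrm{hol}}}$ and then $\sigma_0\in(0,1/8]$ small so that $c_3(\sigma_0^{\alpha_0}K + \sigma_0) \le \tfrac{2}{3}\sigma_0^{\alpha_{\mathrm{hol}}}$ (feasible since $\alpha_{\mathrm{hol}} < \alpha_0 < 1$) yields the one-step decay $\osc_{B_{\sigma_0}}\teDv \le \sigma_0^{\alpha_{\mathrm{hol}}} E$. A standard Campanato iteration on concentric balls, together with the comparability $E(B_\rho) \le \osc_{B_\rho}\teDv \le 2 E(B_\rho)$, upgrades this to $\osc_{B_\delta}\teDv \le c_{\mathrm{hol},G}\,\delta^{\alpha_{\mathrm{hol}}} E$ for every $\delta\in(0,\sigma_0]$, and scaling produces the stated estimate on general balls $B_r$.

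\textbf{Main obstacle.} The delicate point is the nondegenerate linearization: one must control the Orlicz error $\teW$ by a genuine power of $E$, not merely by $E$ times a constant. This requires invoking Lemma~\ref{lem-7.3} with $\btemu\equiv 0$ so that the right-hand side collapses to $(\EDbv)^{t+\delta}$, and then carefully tracking how the rescaled modular $\overline{G}$ of~\eqref{eq:gGH} at scale $|(\teDv)_{B_1}|$ interacts with the Calder\'on--Zygmund bound for the linear system. The reliance on Lemma~\ref{lem-7.3}, whose proof is one of the technical cores of the paper, is what forces the restriction $\sigma_0 < 1/2$ and the loss of exponent from $\alpha_0$ down to $\alpha_{\mathrm{hol}}$.
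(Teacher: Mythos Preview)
Your dichotomy approach differs substantially from the paper's and, as written, has a genuine gap in the parameter balancing. In the degenerate regime you obtain $\osc_{B_\sigma}\teDv \le c_2 K\sigma^{\alpha_0} E$, the factor $K$ entering via $\mean_{B_1}|\teDv| \le (1+K)E$. In the nondegenerate regime, the $L^1$ comparison $\mean_{B_{1/4}}|\teDv-D\teh|\dx \lesssim K^{-\beta}E$ does \emph{not} yield a pointwise bound on $\teDv-D\teh$, so what you actually control is $E(B_\sigma) \le c_1(\sigma + \sigma^{-n}K^{-\beta})E$, the $\sigma^{-n}$ arising from the inclusion $B_\sigma\subset B_{1/4}$. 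Making this small forces $K \gtrsim \sigma^{-(n+\alpha_{\mathrm{hol}})/\beta}$; but then the degenerate bound $K\sigma^{\alpha_0} \gtrsim \sigma^{\alpha_0 - (n+\alpha_{\mathrm{hol}})/\beta}$ blows up as $\sigma\to 0$, since $\beta = \delta/t < 1$ from Lemma~\ref{lem-7.3} while $\alpha_0 < 1 \le n$. No choice of $\alpha_{\mathrm{hol}}\in(0,\alpha_0)$ rescues this: the power $\beta$ delivered by the Calder\'on--Zygmund route is simply too small to absorb the $\sigma^{-n}$ loss. (Separately, the claim $\osc_{B_\rho}\teDv \le 2E(B_\rho)$ is false in general; only $E\le\osc$ holds, though that direction alone suffices for iteration.)

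The paper avoids the dichotomy entirely and never invokes Section~\ref{sec:comparison}. It first extracts from \cite[Theorem~6.1]{DSV1}, together with the algebraic bound $|\calV(\texi)-\calV(\teeta)| \gtrsim G^{1/2}(|\texi-\teeta|)$ for $\calV(\texi)=(g(|\texi|)/|\texi|)^{1/2}\texi$ (valid since $p>2$) and Lemma~\ref{lem:Lip}, the preliminary estimate $\osc_{B_{\delta r}}\teDv \lesssim \delta^{\alpha}\mean_{B_r}|\teDv|\dx$. The upgrade from $\mean_{B_r}|\teDv|$ to $\mean_{B_r}|\teDv-(\teDv)_{B_r}|$ is then achieved by \emph{differentiating the equation}: the preliminary bound forces $|\teDv|\approx|(\teDv)_{B_r}|$ on an inner ball $B_{2\sigma_0 r}$, so each $\tew_i=\partial_{x_i}\tev$ solves there a linear system $-\tedv(\cB(x):D\tew_i)=0$ with uniformly elliptic, H\"older-continuous coefficients $\cB(x)$ built from $\cL(\teDv(x))$. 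A standard Schauder perturbation argument (not Proposition~\ref{prop:osc-Ah}, which treats only constant coefficients) then gives the oscillation decay for $\tew_i$ directly in terms of its own mean oscillation. This completely sidesteps the error term $\teW$ and the need to balance $K$ against $\sigma$.
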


\begin{proof}
    By Theorem 6.1 in \cite{DSV1} (application of Theorem 6.4 in \cite{DSV1} and using Campanato's theorem), we obtain
    \begin{align*}
        \osc_{B_{\delta r}} \calV(\teDv) \le c \delta^{\alpha_{\mathrm{hol}}} \mean{B_{r/2}} \calV(\teDv)\dx\,,
    \end{align*}
    where $\calV({\xi})= \left(\frac{g(|{\xi}|)}{|{\xi}|}\right)^{1/2}\xi$. Since $p > 2$, by \cite[Theorem 1 and Corollary 1.2]{BoCh} it holds
    \begin{align*}
        |\calV(\texi) - \calV(\teeta)| \ge c G^{1/2}(|\texi - \teeta|), \quad |\calV(\texi)| = G^{1/2}(|\texi|)\,.
    \end{align*}
    Thus, we obtain
    \begin{align*}
        G^{1/2}\left(\osc_{B_{\delta r}} \teDv\right) &\le c \delta^{\alpha} \mean{B_{r/2}} G^{1/2}(|\teDv|)\dx\le c \delta^{\alpha} G^{1/2}\left(\sup_{B_{r/2}} |\teDv|\right)\,.
    \end{align*}
    As a consequence, applying $(G^{1/2})^{-1}$ on both sides, and applying Lemma \ref{lem:Lip} we end up with
    \begin{align*}
        \osc_{B_{\delta r}} \teDv \le c \delta^{\alpha_{\mathrm{hol}}} \sup_{B_{r/2}} |\teDv| \le c \delta^{\alpha_{\mathrm{hol}}} \mean{B_{r}} |\teDv|\dx\,.
    \end{align*}
    With this estimate at hand, by following the arguments in the proof of \cite[Theorem 3.2]{KuMi2018}, we obtain the following two results: 
    \begin{align*}
    c^{-1} |(\teDv)_{B_r}| &\le |\teDv(x)| \le c |(\teDv)_{B_r}| \qquad \forall x \in B_{2 \sigma_0 r}\,,\\
        |\teDv(x) - \teDv(y)| &\le c |(\teDv)_{B_r}||x-y|^{\alpha_{\mathrm{hol}}} \qquad \forall x,y \in B_{r/2}\,,
    \end{align*}
    where $c > 0$ depends only on $n,m,p,q$ and $\sigma_0$ is a constant depending only on $\alpha_{\mathrm{hol}}$.\\
    Let us observe that $\tew_i := \partial_{x_i}\tev$ satisfies the system
    \begin{align*}
        -\tedv(\cB(x)\cL(\teDw_i)) = 0 \qquad\text{with}\qquad \cB(x) = {g\left( \frac{|\teDv|}{(\teDv)_{B_r}} \right)}{\frac{(\teDv)_{B_r}} {|\teDv|}} \mathcal{L}(\teDv(x))\,,
    \end{align*}
    where $\mathcal{L}$ is defined as in \eqref{eq:L} and it satisfies \eqref{L-basic-prop} and \eqref{eq:L-upper}. Using the previous two estimates, we obtain that 
    \begin{align*}
        c^{-1}|\texi|^2 \le (\cB(x):\texi):\texi \le c |\texi|^2 \qquad \forall \texi \in \rnm ~~ x \in B_{2\sigma_0 r}\,,
    \end{align*}
    making it a linear elliptic system. Moreover, $\cB$ is H\"older continuous by the previous two estimates and since $g$ is $C^2$. Thus, by a standard perturbation argument, $w_i$ is locally H\"older continuous in $B_{\sigma_0 r}$ for any exponent $\alpha \in (0,1)$, and we have
    \begin{align*}
        \osc_{B_{\delta r}} \teDw_i \le c \delta^{\alpha_{\mathrm{hol}}} \mean{B_{2\sigma_0 r}} |\teDw_i - (\teDw_i)_{B_{\sigma_0 r}}|\dx \le 2 c \sigma_0^{-n} \delta^{\alpha_{\mathrm{hol}}} \mean{B_r} |\teDw_i - (\teDv)_{B_r}|\dx
    \end{align*}
    for any $\delta \in (0, \sigma_0)$. This concludes the proof.
\end{proof}

\subsection{Fixing parameters}\label{ssec:parameters}
First of all, we define 
\begin{align*}
    c_{\mathrm{hol}} = \max \{(c_{\mathrm{hol},A}),(c_{\mathrm{hol},G})\}\,,
\end{align*}
where $c_{\mathrm{hol},A}$ denotes the constant from  Proposition~\ref{prop:osc-Ah} and $c_{\mathrm{hol},G}$ is the constant from  Lemma~\ref{lem:grad-osc}.
Moreover, we fix
\begin{align}\label{sigma}
    \sigma &:=\min\left\{ \frac{1}{32}, \frac{\sigma_0}{20}, \left( \frac{1}{8^{n+20} c_{\mathrm{hol}}} \right)^{1/\alpha_{\mathrm{hol}}} \right\}\,,
\end{align}

Having $B_r(x_0)\subset\Omega$ for every $j=0,1,\dots$ we set\begin{equation*}
    r_j:=\sigma^{j+1}r\qquad\text{and}\qquad B^j:=\overline{B_{r_j}(x_0)}\,,
\end{equation*}
and $r_{-1}:=r$. In turn we obtain a sequence of closed balls shrinking to $x_0$
\[\cdots B_{r_{j+1}}(x_0)=B^{j+1}\subset B^j=B_{r_{j}}(x_0)\subset \cdots \subset B^0=B_{\sigma r}(x_0)\,.\]

Next, we define
\begin{align}\label{ve-and-theta}
    \varepsilon := \frac{\sigma^n}{2^{10}}\,,\qquad \theta := \min\left\{\theta_{\rm nd}\,, \sigma^{4n}  \right\},
\end{align}
where $\theta_{\rm nd}=\theta_{\rm nd}(\data,\ve)$ is the constant from Proposition~\ref{prop-7.1}.
Finally, set
\begin{align}\label{H1-and-H2}
    H_1 := \frac{2^8}{\theta}\,, \qquad H_2 := \frac{C(\data) c_{\rm d} H_1}{\sigma^{8n}\theta}\,,
\end{align}
where $C(\data) > 0$ is a constant that needs to be chosen large enough so that the estimates \eqref{eq:C-large1} and \eqref{eq:C-large2} hold true. Finally, recall the constant $c_{\rm d} > 0$ from Proposition~\ref{prop-5.1}.

\subsection{Gradient excess control}
We define
\begin{equation*}
    A_j:=|(\teDu)_{B^j}|\,,\quad 
    E_j:=\mean{B^j}|\teDu-(\teDu)_{B^j}|\dx\,, \quad\text{and}\quad C_j:=A_j+HE_j\,.
\end{equation*}

\begin{lem}
    \label{lem8.1} Suppose Assumption~\ref{ass:G} is satisfied. Let $\teu\in W^{1,G}(\Omega,\Rm)$ be a weak solution to~\eqref{eq:mu} in $\Omega$ with $\temu\in C^\infty(\Omega,\Rm)$,  $\theta$ be as in~\eqref{ve-and-theta}, $\sigma$ be as in~\eqref{sigma}, and $c_{\rm d}$ as in Proposition~\ref{prop-5.1}. If for some integer $j$ it holds\begin{equation}
        \label{Ej-large} E_j\geq \theta A_j\,,
    \end{equation}
    then\begin{equation*}
        E_{j+1}\leq \frac{E_j}{2^7}+\frac{2c_{\rm d}}{\sigma^n}g^{-1}\left(\frac{|\temu|(B^j)}{r_j^{n-1}}\right)\,.
    \end{equation*}
\end{lem}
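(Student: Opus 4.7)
The plan is to apply the degenerate approximation Proposition \ref{prop-5.1} on $B^j$ to replace $\teu$ by an $\opA$-harmonic map $\tev$ on $B_{r_j/2}(x_0)$, and then use the H\"older regularity of $\teDv$ granted by Lemma \ref{lem:grad-osc} to harvest decay when passing from $B_{r_j/2}(x_0)$ to the much smaller concentric ball $B^{j+1}$.

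Because the hypothesis \eqref{Ej-large} is precisely the degeneracy condition \eqref{degeneracy-condition} with the $\theta$ fixed in \eqref{ve-and-theta}, Proposition \ref{prop-5.1} applied with $\ve=\sigma^n/2^{10}$ and $s=0$ delivers an $\opA$-harmonic $\tev$ on $B_{r_j/2}(x_0)$ satisfying
\begin{equation*}
\mean{B_{r_j/2}(x_0)}|\teDu-\teDv|\dx \le \ve\, E_j + c_{\rm d}\, g^{-1}\!\left(\frac{|\temu|(B^j)}{r_j^{n-1}}\right).
\end{equation*}
The choice $\sigma\le 1/32$ gives $B^{j+1}\subset B_{r_j/2}(x_0)$, and the choice $2\sigma\le \sigma_0$ (forced by \eqref{sigma}) lets Lemma \ref{lem:grad-osc} apply on $B_{r_j/2}(x_0)$ with $\delta=2\sigma$.

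I would then split $E_{j+1} \le 2\mean{B^{j+1}}|\teDu-\teDv|\dx + 2\osc_{B^{j+1}}\teDv$. For the first term I enlarge the domain from $B^{j+1}$ to $B_{r_j/2}(x_0)$, paying the volume factor $(2\sigma)^{-n}$, and substitute the comparison estimate above. For the second term, Lemma \ref{lem:grad-osc} bounds $\osc_{B^{j+1}}\teDv$ by $c_{\mathrm{hol},G}(2\sigma)^{\alpha_{\mathrm{hol}}}$ times the mean oscillation of $\teDv$ on $B_{r_j/2}(x_0)$. That mean oscillation I estimate by comparing $\teDv$ with the constant $(\teDu)_{B^j}$: the triangle inequality reduces it to the same comparison estimate (the $\teDu-\teDv$ part) plus a term of size $2^{n+1}E_j$ (the $\teDu-(\teDu)_{B^j}$ part, obtained by enlarging from $B^j$ to $B_{r_j/2}(x_0)$ and using $|B^j|/|B_{r_j/2}|=2^n$).

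Combining the three pieces, $E_{j+1}$ becomes a linear combination of $E_j$ and $g^{-1}(|\temu|(B^j)/r_j^{n-1})$, with respective coefficients of the form $\frac{2\ve}{(2\sigma)^n}+4c_{\mathrm{hol},G}(2\sigma)^{\alpha_{\mathrm{hol}}}(\ve+2^n)$ and $c_{\rm d}\bigl[\frac{2}{(2\sigma)^n}+4c_{\mathrm{hol},G}(2\sigma)^{\alpha_{\mathrm{hol}}}\bigr]$. The main obstacle is then a bookkeeping check that the parameters fixed in Section \ref{ssec:parameters} absorb every loss: the choice $\ve=\sigma^n/2^{10}$ reduces the leading part of the $E_j$-coefficient to $2^{-n-9}\le 2^{-11}$, while $\sigma^{\alpha_{\mathrm{hol}}}\le (8^{n+20}c_{\mathrm{hol}})^{-1}$ makes every term carrying a factor $c_{\mathrm{hol},G}(2\sigma)^{\alpha_{\mathrm{hol}}}$ negligible, so that the coefficient of $E_j$ stays comfortably below $2^{-7}$; for the other coefficient, the leading term $\frac{2}{(2\sigma)^n}\le 2/\sigma^n$ for $n\ge 2$ and the H\"older tail is negligible, which yields exactly $2c_{\rm d}/\sigma^n$ after multiplying by $c_{\rm d}$. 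This is precisely why the tight coupling between $\ve$, $\sigma$ and $c_{\mathrm{hol}}$ in Section \ref{ssec:parameters} was built in.
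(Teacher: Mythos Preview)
Your proposal is correct and follows essentially the same route as the paper's proof: apply Proposition~\ref{prop-5.1} on $B^j$ (using Jensen's inequality to pass from the $(g^{1+s})^{-1}$ formulation to the plain $L^1$ average), split $E_{j+1}$ into the comparison term and $2\osc_{B^{j+1}}\teDv$, control the oscillation via Lemma~\ref{lem:grad-osc} by comparing $\teDv$ to the constant $(\teDu)_{B^j}$, and verify the constants using the choices of $\sigma$ and~$\ve$. Your bookkeeping matches the paper's, with the cosmetic difference that you carry the slightly sharper factor $(2\sigma)^{-n}$ where the paper simply writes $\sigma^{-n}$.
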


\begin{proof}
By \eqref{Ej-large}, we can apply Proposition~\ref{prop-5.1}. In particular, since $g^{1+s}$ is convex, there exists an $\opA$-harmonic map $\tev_j$ in $B_{r_j/2}$ such that
\begin{align}
\label{eq:lem8.1-help1}
    \mean{B^j{/2}}|\teDu-\teDv_j|\dx \leq \varepsilon E_j+c_{\rm d}g^{-1}\left(\frac{|\temu|(B^j)}{r_j^{n-1}}\right)\,.
\end{align}
Moreover, by Lemma~\ref{lem:grad-osc} and using \eqref{eq:lem8.1-help1} we have
\begin{align}
\label{eq:lem8.1-help2}
\begin{split}
    \osc_{B^{{j+1}}} \teDv_j &\le 2 c_{\mathrm{hol}} \sigma^{\alpha_{\mathrm{hol}}} \mean{B^j{/2}} |\teDv_j - (\teDv_j)_{B^{j}}| \dx \\
    &\le 4 c_{\mathrm{hol}} \sigma^{\alpha_{\mathrm{hol}}} \mean{B^j{/2}} |{\teDv_j - (\teDu)_{B_{r_j}}}| \dx\\
    &{\le 4 c_{\mathrm{hol}} \sigma^{\alpha_{\mathrm{hol}}} \left( 2^n E_j + \mean{B^j{/2}} |\teDu - \teDv_j| \dx \right)} \\
    &\le 4 c_{\mathrm{hol}} \sigma^{\alpha_{\mathrm{hol}}} \left({2^{n+1}} E_j + {c_{\rm d}}\, g^{-1}\left(\frac{|\temu|({B^j})}{r_j^{n-1}}\right) \right).
    \end{split}
\end{align}
Having at hand \eqref{eq:lem8.1-help1} and \eqref{eq:lem8.1-help2}, we obtain
\begin{align*}
    E_{j+1} & \le \sigma^{-n} \mean{B^{j}/2} |\teDu - \teDv_j| \dx + 2 \osc_{{B^{j+1}}} \teDv_j\\
    &\le (2^{{n+3}} c_{\mathrm{hol}} \sigma^{\alpha_{\mathrm{hol}}} + \sigma^{-n} \varepsilon) E_j + {\left( 8 c_{\mathrm{hol}} c_{\rm d} \sigma^{\alpha_{\mathrm{hol}}} + \sigma^{-n} c_{\rm d} \right)} g^{-1}\left(\frac{|\temu|({B^j})}{r_j^{n-1}}\right) \\
    &\le 2^{-7} E_j + \frac{2 c_{\rm d}}{\sigma^{n}} g^{-1}\left(\frac{|\temu|({B^j})}{r_j^{n-1}}\right)
\end{align*}
by following the computation in \cite[Lemma 8.1]{KuMi2018} and using that 
\begin{align*}
    \sigma \le \left( \frac{1}{8^{n+20}c_{\mathrm{hol}}}\right)^{\frac{1}{\alpha_{\mathrm{hol}}}} \qquad\text{and}\qquad \varepsilon \le \frac{\sigma^n}{2^{10}}\,.
\end{align*}
\end{proof}

\begin{lem}
    \label{lem8.2} Suppose Assumption~\ref{ass:G} is satisfied. Let $\teu\in W^{1,G}(\Omega,\Rm)$ be a weak solution to~\eqref{eq:mu} in $\Omega$ with $\temu\in C^\infty(\Omega,\Rm)$, and $\theta$ be as in~\eqref{ve-and-theta}. If \begin{equation}
        \label{Ej-small,meas-nice} E_j\leq \theta A_j\qquad\text{and}\qquad \frac{|\temu|(B^j)}{r_j^{n-1}}\leq \theta \frac{g(A_j)}{A_j}E_j\,,
    \end{equation}
    then\begin{equation*}
        E_{j+1}\leq \frac{E_j}{4}\,.
    \end{equation*}
\end{lem}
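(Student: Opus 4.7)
The strategy is to apply the nondegenerate linearization (Proposition~\ref{prop-7.1}) and then control the excess on the smaller ball by the oscillation estimate for the resulting $A$-harmonic comparison map (Proposition~\ref{prop:osc-Ah}), exactly as in the linear elliptic setting. By the choice of $\theta$ in \eqref{ve-and-theta}, we have $\theta\le\theta_{\rm nd}(\data,\varepsilon)$ with $\varepsilon=\sigma^n/2^{10}$, so the assumptions \eqref{Ej-small,meas-nice} imply the hypotheses of Proposition~\ref{prop-7.1} on the ball $B^j=B_{r_j}(x_0)$. This produces a map $\teh\in W^{1,2}(B_{r_j/4},\Rm)$ which is $A$-harmonic with $A=\cL((\teDu)_{B^j})$, satisfying
\begin{equation*}
\mean{B_{r_j/4}}|\teDu-D\teh|\dx\le \varepsilon E_j.
\end{equation*}

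By \eqref{L-basic-prop} and \eqref{eq:L-upper}, the matrix $A$ is uniformly elliptic with constant depending only on $q$, so Proposition~\ref{prop:osc-Ah} applies with $c_{\mathrm{hol}}=c_{\mathrm{hol}}(\data)$. Since $r_{j+1}=\sigma r_j=(4\sigma)\cdot r_j/4$ and $4\sigma\le 1/2$, we obtain
\begin{equation*}
\osc_{B^{j+1}}D\teh \le 4c_{\mathrm{hol}}\sigma\mean{B_{r_j/4}}|D\teh-(D\teh)_{B_{r_j/4}}|\dx.
\end{equation*}
The mean oscillation on the right-hand side is then bounded by triangle inequality and the $L^1$ comparison with $\teDu$:
\begin{equation*}
\mean{B_{r_j/4}}|D\teh-(D\teh)_{B_{r_j/4}}|\dx \le 2\mean{B_{r_j/4}}|D\teh-(\teDu)_{B^j}|\dx \le 2\varepsilon E_j+2\cdot 4^n E_j.
\end{equation*}

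To conclude, write
\begin{equation*}
E_{j+1}\le 2\mean{B^{j+1}}|\teDu-D\teh|\dx+\osc_{B^{j+1}}D\teh,
\end{equation*}
and estimate the first term using $B^{j+1}\subset B_{r_j/4}$ and $|B^{j+1}|/|B_{r_j/4}|=(4\sigma)^n$:
\begin{equation*}
\mean{B^{j+1}}|\teDu-D\teh|\dx\le (4\sigma)^{-n}\mean{B_{r_j/4}}|\teDu-D\teh|\dx\le \frac{\varepsilon}{(4\sigma)^n}E_j=\frac{1}{4^n\cdot 2^{10}}E_j.
\end{equation*}
Combining with the oscillation bound yields
\begin{equation*}
E_{j+1}\le \frac{E_j}{2^{9}}+8c_{\mathrm{hol}}\sigma(1+4^n)E_j,
\end{equation*}
and the choice $\sigma\le (8^{n+20}c_{\mathrm{hol}})^{-1/\alpha_{\mathrm{hol}}}\le (8^{n+20}c_{\mathrm{hol}})^{-1}$ in \eqref{sigma} forces the second term below $E_j/8$, so the sum is bounded by $E_j/4$.

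The proof is essentially pure bookkeeping, and the only real point is that the parameters $\varepsilon$, $\theta$, $\sigma$ have already been tuned in Section~\ref{ssec:parameters} precisely so that both the comparison term and the oscillation term absorb into $E_j/4$. The main subtlety is in verifying that $A=\cL((\teDu)_{B^j})$ has ellipticity constants depending only on $\data$ (and in particular independent of $(\teDu)_{B^j}$), so that $c_{\mathrm{hol}}$ remains a fixed constant along the iteration in $j$; this is exactly what \eqref{L-basic-prop}--\eqref{eq:L-upper} provide, and is the reason the nondegenerate case is treated via linearization rather than via the $G$-harmonic estimate of Lemma~\ref{lem:grad-osc}.
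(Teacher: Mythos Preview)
Your proof is correct and follows essentially the same route as the paper's: apply Proposition~\ref{prop-7.1} to produce an $A$-harmonic comparison map, invoke Proposition~\ref{prop:osc-Ah} for its oscillation decay, and combine with the $L^1$ closeness. The only minor omission is that Proposition~\ref{prop-7.1} requires $(\teDu)_{B^j}\neq 0$; the paper disposes of the case $A_j=0$ in one line (then $E_j\le\theta A_j=0$, so $\teDu$ is constant on $B^j$ and $E_{j+1}=0$ trivially).
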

\begin{proof}
    This fact follows the lines \cite[Lemma 8.2]{KuMi2018} upon slight modifications we shall expose. We start with an observation that without loss of generality we can assume that $A_j>0$. Indeed, when $A_j=0$, by~\eqref{Ej-small,meas-nice} we get that $E_j=0$, that is $ \teDbu$ is constant in $B^j$. Then also $E_{j+1}=0$ and the claim follows. We concentrate on the remaining case when $A_j>0$. We can apply Proposition~\ref{prop-7.1} in $B_r=B_{r_j}(x_0)$ since  $\theta{\leq}\theta_{\rm nd}$, and~\eqref{degeneracy-condition} and~\eqref{nice-measure} are satisfied due to~\eqref{Ej-small,meas-nice}, and $A=\cL((\teDu)_{B^j})$ is elliptic due to~\eqref{L-basic-prop}. We obtain an $A$-harmonic map $\teh_j\in W^{1,2}(B_{r_j/4})$ such that\[\mean{\frac 14 B^j}|\teDu-D\teh_j|\dx\leq \varepsilon E_j\,.\] By Proposition~\ref{prop:osc-Ah} we know that
    \[\osc_{B^{j+1}}D\teh_j\leq 4c_{\mathrm{hol}}\sigma^{\alpha_{\mathrm{hol}}}\mean{\frac 14 B^j}|D\teh_j-(D\teh_j)_{\frac 14 B^j}|\dx\leq 8c_{\mathrm{hol}}\sigma^{\alpha_{\mathrm{hol}}}\mean{\frac 14 B^j}|D\teh_j-(\teDu)_{\frac 14 B^j}|\dx\,.\]
    Now, we combine the two above displays after noticing that\begin{align*}
        E_{j+1}&\leq 2\mean{B^{j+1}}|\teDu-(D\teh_j)_{B^{j+1}}|\dx\leq  2\mean{B^{j+1}}|\teDu-D\teh_j|\dx+ 2\mean{B^{j+1}}|D\teh_j-(D\teh_j)_{B^{j+1}}|\dx,
    \end{align*}
    and proceed as in the proof of \cite[Lemma 8.2]{KuMi2018}. We get $E_{j+1}\leq C E_j$ with $C=(\sigma^{-n}\varepsilon+16c_{\mathrm{hol}}\sigma^{\alpha_{\mathrm{hol}}}(4^n+\ve))<1/4$ because of the choice of $\sigma$.
\end{proof}

\begin{lem}
    \label{lem8.3}Suppose Assumption~\ref{ass:G} is satisfied. Let $\teu\in W^{1,G}(\Omega,\Rm)$ be a weak solution to~\eqref{eq:mu} with $\temu\in C^\infty(\Omega,\Rm)$, and let $\theta$ be as in~\eqref{ve-and-theta}. If \begin{equation}
        \label{Ej-small,meas-bad} E_j\leq \theta A_j\qquad\text{and}\qquad \frac{|\temu|(B^j)}{r_j^{n-1}}\geq \theta \frac{g(A_j)}{A_j}E_j\,,
    \end{equation}
    then\begin{equation*}
        E_{j}\leq \frac{1}{\theta}g^{-1}\left(\frac{|\temu|(B^j)}{r_j^{n-1}}\right)\,.
    \end{equation*}
\end{lem}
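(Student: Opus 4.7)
The plan is to observe that this lemma is essentially an algebraic rearrangement of \eqref{Ej-small,meas-bad} combined with the power-like monotonicity enforced by Assumption~\ref{ass:G}; there is no PDE content here, unlike in Lemmas~\ref{lem8.1} and~\ref{lem8.2}.

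First I would dispose of the degenerate case $A_j=0$: the first hypothesis then forces $E_j=0$ and the conclusion is trivial since $g^{-1}\geq 0$. Assume henceforth $A_j>0$. Since \eqref{a:p-1} yields $g'>0$ on $(0,\infty)$, $g$ is strictly increasing on $[0,\infty)$ and $g^{-1}$ is well-defined and monotone. The target $E_j\leq \theta^{-1}g^{-1}(|\temu|(B^j)/r_j^{n-1})$ is therefore equivalent to $g(\theta E_j)\leq |\temu|(B^j)/r_j^{n-1}$, which, in view of the second hypothesis in \eqref{Ej-small,meas-bad}, is implied by the pointwise comparison
\begin{equation*}
g(\theta E_j)\leq \theta\,\frac{g(A_j)}{A_j}\,E_j.
\end{equation*}

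The key tool for this comparison is the monotonicity of $t\mapsto g(t)/t^{p-1}$ on $(0,\infty)$: a direct differentiation gives $(g(t)/t^{p-1})' = t^{-p}(tg'(t)-(p-1)g(t))\geq 0$ by \eqref{a:p-1}. The first hypothesis together with $\theta<1$ gives $\theta E_j\leq A_j$, so applying this monotonicity at the ordered pair $\theta E_j\leq A_j$ yields $g(\theta E_j)\leq \theta^{p-1}(E_j/A_j)^{p-1}g(A_j)$. Comparing this bound with the right-hand side of the displayed comparison leaves an extra factor $\theta^{p-2}(E_j/A_j)^{p-2}$, which is at most $1$ because $p>2$, $\theta\in(0,1)$ and $E_j/A_j\leq\theta$.

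Since the argument is entirely elementary, the only subtle point---and the closest thing to an obstacle---is identifying the correct power-type monotonicity that replaces the strict $p$-homogeneity of $t^{p-1}$ used in the power-growth analogue in \cite{KuMi2018}. Once this is in hand, no further PDE machinery is required.
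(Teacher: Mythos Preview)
Your proof is correct and follows essentially the same approach as the paper. Both arguments reduce the claim to the elementary inequality $g(\theta E_j)\leq \theta\,\tfrac{g(A_j)}{A_j}E_j$ and verify it using the monotonicity of $t\mapsto g(t)/t^{p-1}$ (a consequence of \eqref{a:p-1}) together with $E_j\leq\theta A_j$, $\theta\leq 1$, and $p>2$; the paper merely organizes the chain of inequalities slightly differently, first passing through $\theta E_j\leq \theta^2 A_j$ and the monotonicity of $g(t)/t$.
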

\begin{proof}
    Note that using that $G$ is super-quadratic ($p> 2$), $\theta\leq 1$, and~\eqref{Ej-small,meas-bad} we have
    \[g(\theta E_j)=\frac{g(\theta E_j)}{\theta E_j}\theta E_j\leq  \frac{g(\theta^2 A_j)}{\theta A_j}E_j\leq \theta^{2(p-2)}\left(\theta \frac{ g(A_j)}{A_j}\right)E_j\leq \frac{|\temu|(B^j)}{r_j^{n-1}}\,.\]
\end{proof}

\subsection{Key lemmas for SOLA}

\begin{lem}[Lemma 8.4 in \cite{KuMi2018}]\label{lem-8.4}    
Suppose Assumption~\ref{ass:G} is satisfied. Let $\teu\in W^{1,G}(\Omega,\Rm)$ be a~SOLA to~\eqref{eq:mu} in $\Omega$ with a bounded Borel measure datum $\temu$, and $\sigma$ be as in \eqref{sigma}.  Let $\lambda > 0$ be such that for some $k \ge k_0 \ge 0$ it holds
    \begin{align}\nonumber
        C_j \le \lambda\,, \qquad C_{j+1} &\ge \lambda /16\,, \qquad \forall j \in \{k_0,\dots,k\}\,, \qquad C_{k_0} \le \lambda /4\,,\\
        \label{eq:lem8.4-ass2}
        g^{-1}\left(\sum_{j={k_0}}^k\frac{|\temu|(B^j)}{r_j^{n-1}}\right) &\le \frac{2\lambda}{H_2\sigma^n}\,.
    \end{align}

    Then
    \begin{align}
    \label{eq:lem8.4-goal}
        C_{k+1} < \lambda\,, \qquad \sum_{j = {k_0}}^{k+1} E_j \le \frac{\sigma^{4n}}{32}\lambda\,, \qquad \sum_{j = {k_0}}^{k+1} E_j \le 2 E_{k_0} + \frac{H_2 \sigma^n}{64 H_1} \frac{1}{g'(\lambda)} \sum_{j = {k_0}}^k \frac{|\temu|(B^j)}{r_j^{n-1}}\,.
    \end{align}
\end{lem}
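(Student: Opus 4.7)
The plan is to adapt the blueprint of \cite[Lemma~8.4]{KuMi2018} to the Orlicz setting. Since $\teu$ is a SOLA, I will first work with the approximating weak solutions $\teu_h$ to systems driven by smooth data $\temu_h$, for which Lemmas~\ref{lem8.1}--\ref{lem8.3} apply directly. The three conclusions will be established for each $\teu_h$ in terms of the corresponding quantities $A_j^h, E_j^h, C_j^h$, and then recovered for the SOLA by passing to the limit $h\to\infty$, exploiting the $W^{1,1}_{\mathrm{loc}}$ convergence $\teu_h\to \teu$ together with the convergence of measures~\eqref{conv-of-meas} on the closed balls $B^j$. In what follows the index $h$ is suppressed.

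For every $j\in\{k_0,\ldots,k\}$ the trichotomy underlying Lemmas~\ref{lem8.1}--\ref{lem8.3} forces exactly one of the following alternatives: either (D) $E_j\geq \theta A_j$, so that $E_{j+1}\leq E_j/2^7+(2c_{\rm d}/\sigma^n)m_j$ with $m_j:=g^{-1}(|\temu|(B^j)/r_j^{n-1})$; or (N) $E_j\leq \theta A_j$ together with $|\temu|(B^j)/r_j^{n-1}\leq \theta g(A_j)E_j/A_j$, yielding $E_{j+1}\leq E_j/4$; or (B) $E_j\leq \theta A_j$ together with $|\temu|(B^j)/r_j^{n-1}> \theta g(A_j)E_j/A_j$, yielding $E_j\leq m_j/\theta$. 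Combining case (B) with the crude a~priori bound $E_{j+1}\leq \sigma^{-n}E_j$ (from the defining infimum of $E_{j+1}$), I obtain in every case the unified recursion
\[E_{j+1}\leq \tfrac14 E_j + C_* m_j\qquad \text{for every }\; j\in\{k_0,\ldots,k\},\]
with constant $C_*=C_*(\data,\sigma,\theta)$.

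Iterating this recursion and summing, a standard geometric-series argument produces
\[\sum_{j=k_0}^{k+1} E_j\;\leq\;\tfrac{4}{3} E_{k_0} + \tfrac{4}{3} C_*\sum_{l=k_0}^{k} m_l.\]
The main technical obstacle is then to convert $\sum_l m_l$ into the linearized Riesz-type sum $(1/g'(\lambda))\sum_l|\temu|(B^l)/r_l^{n-1}$ appearing in~(c). Hypothesis \eqref{eq:lem8.4-ass2} guarantees $m_l\leq a:=2\lambda/(H_2\sigma^n)\leq \lambda$ term-wise, and $\sum_l |\temu|(B^l)/r_l^{n-1}\leq g(a)$. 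Exploiting Assumption~\ref{ass:G}, specifically $(p-1)g(t)\leq tg'(t)$ together with the monotonicity of $g'(t)/t^{p-2}$ (implied by $(p-2)g'(t)\leq tg''(t)$), I obtain the key comparison $g(a)/g'(\lambda)\leq c(\data)\,a^{p-1}/\lambda^{p-2}\leq c(\data)\,a$. This allows the factor $a$ to be absorbed into the target linearized expression at the price of the prefactor $H_2\sigma^n/(64H_1)$, thereby establishing~(c).

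The remaining conclusions are then deduced. For~(b), the hypothesis $C_{k_0}\leq \lambda/4$ gives $E_{k_0}\leq \lambda/(4H_1)$, and feeding the above comparison into (c), together with the explicit choice of $H_1$ and $H_2$, yields the stated bound $\sigma^{4n}\lambda/32$. For~(a), I expand $C_{k+1}=A_{k+1}+H_1 E_{k+1}$, bound $A_{k+1}\leq A_{k_0}+\sigma^{-n}\sum_{j=k_0}^{k}E_j\leq \lambda/4+\sigma^{3n}\lambda/32$ via~(b), and control $H_1 E_{k+1}$ via the iterated recursion (whose geometric decay in $l$ yields $E_{k+1}$ of order $\sigma^{6n}\lambda/H_1$); the smallness of $\sigma$ and the largeness of $H_2$ relative to $H_1$ (through the constant $C(\data)$) together force $C_{k+1}<\lambda$. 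The hard part throughout is the linearization described in the third paragraph: the nonlinear Orlicz growth makes reconciling the $g^{-1}$-quantities with the linear $g'(\lambda)^{-1}$ form substantially more delicate than in the $p$-Laplace case of~\cite{KuMi2018}.
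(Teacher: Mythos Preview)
There is a genuine gap in the linearization step. The paper's proof differs crucially from yours in that it does \emph{not} keep the degenerate alternative (D) in the unified recursion; instead, its Step~1 shows by contradiction that (D) is impossible for every $j\in\{k_0,\ldots,k\}$. Namely, if $E_j\geq \theta A_j$ then $A_j\leq \lambda/2^6$, while Lemma~\ref{lem8.1} together with \eqref{eq:lem8.4-ass2} gives $H_1 E_{j+1}\leq \lambda/2^6$ and $|A_{j+1}-A_j|\leq \sigma^{-n}E_j\leq \lambda/2^6$, so that $C_{j+1}<3\lambda/2^6$, violating the hypothesis $C_{j+1}\geq \lambda/16$. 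Once (D) is ruled out only (N) and (B) remain, and in case (B) the same hypothesis $C_{j+1}\geq \lambda/16$ forces $A_j\geq \lambda/2^5$. This lower bound is the linearization mechanism: from $E_{j+1}\leq \tfrac{2}{\sigma^n}E_j\leq \tfrac{2}{\theta\sigma^n}\,\tfrac{A_j}{g(A_j)}\,\tfrac{|\temu|(B^j)}{r_j^{n-1}}$ and \eqref{a:p-1}--\eqref{a:p-2} one obtains $E_{j+1}\leq \tfrac{c}{g'(\lambda)}\,\tfrac{|\temu|(B^j)}{r_j^{n-1}}$ directly. The resulting recursion $E_{j+1}\leq E_j/4+\tfrac{c}{g'(\lambda)}\,\tfrac{|\temu|(B^j)}{r_j^{n-1}}$ is already linear in the measure terms and sums to give the third conclusion without any post-hoc conversion.

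Your route carries $m_j=g^{-1}(s_j)$, with $s_j:=|\temu|(B^j)/r_j^{n-1}$, through the recursion, and then requires $\sum_l m_l\lesssim \tfrac{1}{g'(\lambda)}\sum_l s_l$. This fails. Pointwise it would demand $g'(\lambda)\lesssim g(m_l)/m_l\approx g'(m_l)$, but $m_l$ can be arbitrarily small when $s_l$ is small. Concretely, for $g(t)=t^{p-1}$ the claim becomes $\sum_l s_l^{1/(p-1)}\lesssim \lambda^{-(p-2)}\sum_l s_l$; with $N$ equal terms $s_l=g(a)/N$ one has $\sum_l s_l\leq g(a)$ but $\sum_l m_l=N^{(p-2)/(p-1)}a\to\infty$ as $N\to\infty$, while the right-hand side stays bounded. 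Your comparison $g(a)/g'(\lambda)\leq c\,a$ is correct, but it bounds $\tfrac{1}{g'(\lambda)}\sum_l s_l$ from above, which is the target expression, not the quantity $\sum_l m_l$ you must dominate. The hypothesis $C_{j+1}\geq \lambda/16$ is precisely what your argument never uses; it is this hypothesis that eliminates the degenerate branch and supplies the lower bound $A_j\gtrsim\lambda$ needed for the linearization.
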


\begin{proof}
    The proof follows along the lines of \cite[Lemma 8.4]{KuMi2018} and relies on Lemmas \ref{lem8.1}, \ref{lem8.2}, and \ref{lem8.3}. The only difference to \cite[Lemma 8.4]{KuMi2018} comes from the occurence of the growth function $g$ in the respective lemmas and in \eqref{eq:lem8.4-ass2}. Let us shortly sketch the main arguments of the proof:

\noindent{\bf    Step 0.} As in \cite{KuMi2018} it suffices to prove \eqref{eq:lem8.4-goal} for energy solutions $\teu_h \in W^{1,G}(\Omega,\Rm)$ and $\temu_h \in C^{\infty}(\Omega,\Rm)$ satisfying~\eqref{conv-of-meas} and then pass to the limit according to the definition of SOLA. Let us descibe how to do it. We  define
    \[A_j^h:=|(\teDu_h)_{B^j}|\,,\quad E_j^h:=\mean{B^j}|\teDu_h-(\teDu_h)_{B^j}|\dx\,,\quad\text{and}\quad C_j^h:=A_j^h+H_1E_j^h\,.\]
    Having $W^{1,1}$-convergence of $\{\teu_h\}$, we can justify that for every $j$ it holds
    \[A_j^h\to A_j\,,\quad E_j^h\to E_j\,,\quad C_j^h\to C_j\qquad\text{as }\ h\to\infty\,. \]
    If the assumptions are satisfied for $\teu_{\bar h}$ with $\bar h$ large enough, they are satisfied for $h>\bar h$. In turn, it is enough to prove the claim for the approximate solution with fixed $h$ and then let $h\to\infty$.

\noindent{\bf Step 1.} Degenerate case: Let us assume that
    \begin{align*}
        E_j \ge \theta A_j\,.
    \end{align*}
    By assumption, this implies $A_j \le \lambda / 2^6$. Moreover, by Lemma \ref{lem8.1}, \eqref{eq:lem8.4-ass2}, and the choice of $H_1$, we obtain $H_1 E_{j+1} \le \lambda / 2^6$. Besides, we have $|A_{j+1} - A_j| \le \sigma^{-n}E_j\le \lambda / 2^6$ by the same arguments as in \cite[Lemma 8.4]{KuMi2018} and the definition of $H_1$. This leads to a contradiction, since
   \begin{align*}
       \lambda / 16 \le C_{j+1} \le |A_{j+1} - A_j| + A_j + H_1 E_{j+1} < 3 \lambda / 2^6\,,
   \end{align*}
which implies that $E_j \le \theta A_j$ always holds true.

\noindent{\bf Step 2.} Nondegenerate case 1: 
   Clearly, if
   \begin{align*}
       E_j \le \theta A_j\qquad\text{and}\qquad \frac{|\temu|(B^j)}{r_j^{n-1}}\leq \theta \frac{g(A_j)}{A_j}E_j\,,
   \end{align*}
  then we obtain from Lemma~\ref{lem8.2} that
   \begin{align*}
       E_{j+1} \le E_j/4\,.
   \end{align*}

\noindent{\bf Step 3.} Nondegenerate case 2: Finally, we consider the case
   \begin{align*}
        E_j \le \theta A_j\qquad\text{and}\qquad \frac{|\temu|(B^j)}{r_j^{n-1}}\geq \theta \frac{g(A_j)}{A_j}E_j\,.
    \end{align*}
    By Lemma \ref{lem8.3} and \eqref{eq:lem8.4-ass2}, we obtain
    \begin{equation*}
        E_{j} \le \frac{1}{\theta}g^{-1}\left(\frac{|\temu|(B^j)}{r_j^{n-1}}\right) \le \frac{2\lambda}{\theta H_2\sigma^n}\,.
    \end{equation*}
    Using the above estimates, by elementary manipulations starting from
    \[\lambda/16\leq C_{j+1}\leq A_j+|A_{j+1}-A_j|+H_1 E_{j+1}\leq A_j+\lambda/32\,,\]
  see Step~3 of the proof of \cite[Lemma~8.4]{KuMi2018}, we establish that $A_j \ge \lambda / 2^5$. Thus, we get from \eqref{a:p-2}:
    \begin{align*}
        E_{j+1} \le \frac{2 E_j}{\sigma^n} \le \frac{2}{\theta \sigma^n (q-2)} \frac{1}{g'(A_j)} \frac{|\temu|(B^j)}{r_j^{n-1}} \le \frac{2^{1+5(p-2)}}{\theta \sigma^n (q-2)} \frac{1}{g'(\lambda)} \frac{|\temu|(B^j)}{r_j^{n-1}}\,.
    \end{align*}

\noindent{\bf Step 4.}  Conclusion:
    Altogether, we have
    \begin{align*}
        E_{j+1} \le E_j / 4 + \frac{2^{1+5(p-2)}}{\theta \sigma^n (q-2)} \frac{1}{g'(\lambda)}\frac{|\temu|({B^j})}{r_j^{n-1}}\,.
    \end{align*}
    By summation and using \eqref{eq:lem8.4-ass2}, we have
    \begin{align}
    \label{eq:C-large1}
    \begin{split}
        \sum_{j = {k_0}}^{k+1} E_j &\le \frac{4E_{k_0}}{3} + \frac{C(p,q) H_2 \sigma^n }{H_1} \frac{1}{g'(\lambda)}\sum_{j={k_0}}^k \frac{|\temu|({B^j})}{r_j^{n-1}}\\
        &\le \frac{\lambda}{3H_1} + \frac{C(p,q) H_2 \sigma^n }{H_1} \frac{1}{g'(\lambda)} g\left(\frac{2\lambda}{H_2 \sigma^n}\right)\\
        &\le \frac{\lambda}{3H_1} + \lambda \frac{C(p,q)(H_2 \sigma^n)^{2-p}}{H_1} \le \frac{5\lambda}{12 H_1}\,,
    \end{split}
    \end{align}
    where $C(p,q) > 0$ is a constant depending only on $p,q$. Moreover, we used \eqref{a:p-1}, $g(\lambda)/g'(\lambda)\approx \lambda$, and made use of the appropriate choice of $H_1$ and $H_2$, choosing the constant $C(\data) > 0$ in the definition of $H_2$ large enough depending on $C(p,q)$. We complete the proof by noticing
    \begin{align*}
        C_{k+1}&\leq A_{k_0}+\sum_{j={k_0}}^k|A_{j+1}-A_j|+H_1 E_{k+1}\leq C_{k_0}+\sum_{j={k_0}}^k\mean{B^{j+1}}|\teDu-(\teDu)_{B^j}|\dx+H_1 E_{k+1}\\
        &\leq \tfrac{\lambda}{4}+(\sigma^{-n}+H_1)\sum_{j={k_0}}^k E_j\leq \left(\tfrac{1}{2}+\tfrac{5}{12}\right)\lambda=\tfrac{11}{12}\lambda\,.
    \end{align*}
\end{proof}

\begin{lem}\label{lem-8.5}
Suppose Assumption~\ref{ass:G} is satisfied. Let $\teu\in W^{1,G}(\Omega,\Rm)$ be a SOLA to~\eqref{eq:mu} in $\Omega$ with a bounded Borel measure datum $\temu$. 
There exist positive constants $c_V=c_V ({\data}) \geq 1$ and $\alpha_V=\alpha_V ({\data})\in (0, 1)$
such that 
for all $\tau\in(0, 1]$ it holds
\begin{align*}
\mean{B_{\tau r}(x_0)}&|\teDu-(\teDu)_{B_{\tau r}(x_0)} | \dx\leq c_V \tau^{\alpha_V}
\mean{B_{r}(x_0)}|\teDu-(\teDu)_{B_{r}(x_0)} | \dx + c_V \sup_{0<\vr<r}g^{-1}\left(\frac{|\temu|(B_{\vr} (x_0))}{\vr^{n-1}}\right)\,.
\end{align*}
\end{lem}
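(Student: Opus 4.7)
The plan is to iterate Lemmas~\ref{lem8.1}, \ref{lem8.2}, \ref{lem8.3} along the shrinking sequence $r_k=\sigma^{k+1}r$, $B^k=B_{r_k}(x_0)$, after a preliminary reduction to weak solutions with smooth data. From the SOLA definition I would first pick an approximating sequence of weak solutions $\teu_h$ to $-\tedv\opA(D\teu_h)=\temu_h$ with $\temu_h\in C^\infty$, establish the estimate for each $\teu_h$ with constants independent of $h$, and pass to the limit via the $W^{1,1}_{\mathrm{loc}}$-convergence $\teu_h\to\teu$ together with the measure assumption $\limsup_h|\temu_h|(\overline{B_\vr})\le|\temu|(\overline{B_\vr})$.

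Denote $M:=\sup_{0<\vr<r}g^{-1}\bigl(|\temu|(B_\vr(x_0))\vr^{1-n}\bigr)$, $E_k:=\mean{B^k}|\teDu-(\teDu)_{B^k}|\dx$, $A_k:=|(\teDu)_{B^k}|$ for $k\ge 0$, and $E_{-1}:=\mean{B_r(x_0)}|\teDu-(\teDu)_{B_r(x_0)}|\dx$. At every integer step $j\ge -1$ exactly one of the three dichotomies of Lemmas~\ref{lem8.1}--\ref{lem8.3} applies (with the convention $B^{-1}=B_r(x_0)$). The degenerate case ($E_j\ge\theta A_j$) yields $E_{j+1}\le E_j/2^7+(2c_{\rm d}/\sigma^n)M$; the small-measure nondegenerate case yields $E_{j+1}\le E_j/4$; finally, in the large-measure nondegenerate case Lemma~\ref{lem8.3} provides only $E_j\le M/\theta$, which I combine with the elementary $L^1$-averaging bound $E_{j+1}\le 2\sigma^{-n}E_j$ (a consequence of $B^{j+1}\subset B^j$ and $|B^j|/|B^{j+1}|=\sigma^{-n}$) to deduce $E_{j+1}\le 2\sigma^{-n}M/\theta$. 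Collecting the three possibilities uniformly produces the recursion
\begin{equation*}
E_{j+1}\le \tfrac{1}{4}E_j+c_0 M\qquad \text{for every }j\ge -1,
\end{equation*}
with $c_0=c_0(\data)$.

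Iterating this recursion starting from $E_{-1}$ yields, for all $k\ge 0$,
\begin{equation*}
E_k\le 4^{-(k+1)}E_{-1}+\tfrac{4}{3}c_0 M\le \sigma^{\alpha_V(k+1)}E_{-1}+\tfrac{4}{3}c_0 M,
\end{equation*}
with $\alpha_V:=\log 4/\log(\sigma^{-1})\in(0,1)$ chosen so that $4^{-1}=\sigma^{\alpha_V}$; the bound $\sigma\le 1/32$ from~\eqref{sigma} guarantees $\alpha_V<1$. For $\tau\in(\sigma,1]$ the conclusion follows from the trivial ball comparison $\mean{B_{\tau r}}|\teDu-(\teDu)_{B_{\tau r}}|\dx\le 2\tau^{-n}E_{-1}\le 2\sigma^{-n-\alpha_V}\tau^{\alpha_V}E_{-1}$; for $\tau\in(0,\sigma]$ I fix the unique $k\ge 0$ with $\sigma^{k+2}<\tau\le\sigma^{k+1}$, apply $\mean{B_{\tau r}}|\teDu-(\teDu)_{B_{\tau r}}|\dx\le 2\sigma^{-n}E_k$, and use $\sigma^{\alpha_V(k+1)}\le\sigma^{-\alpha_V}\tau^{\alpha_V}$ to absorb everything into a single $\tau^{\alpha_V}$ factor, fixing $c_V$ and $\alpha_V$.

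The critical subtlety lies in the large-measure nondegenerate case of Lemma~\ref{lem8.3}, which controls only $E_j$ and not $E_{j+1}$; the resolution is the pairing with the crude doubling-type bound $E_{j+1}\le 2\sigma^{-n}E_j$, so that the additive term $c_0M$ absorbs the loss while the contraction factor $1/4$ survives throughout the iteration. A secondary technical point is the passage to the limit in the supremum $M_h:=\sup_{0<\vr<r}g^{-1}(|\temu_h|(B_\vr)\vr^{1-n})$: for every $\vr\in(0,r)$ and every $\vr'\in(\vr,r)$ one has $|\temu_h|(B_\vr)\le|\temu_h|(\overline{B_\vr})\le|\temu_h|(B_{\vr'})$, so after taking $\limsup_h$ via the SOLA convergence and letting $\vr'\searrow\vr$ at continuity points of $\vr\mapsto|\temu|(\overline{B_\vr})$, an $h$-independent bound in terms of $M$ is recovered.
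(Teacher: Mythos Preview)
Your argument is correct and follows essentially the same route as the paper's proof: reduce to weak solutions with smooth data, combine Lemmas~\ref{lem8.1}--\ref{lem8.3} into the single recursion $E_{j+1}\le\tfrac14 E_j+c\,g^{-1}(|\temu|(B^j)/r_j^{n-1})$, iterate, and interpolate. Your explicit treatment of the large-measure nondegenerate case via the crude bound $E_{j+1}\le 2\sigma^{-n}E_j$ is exactly what the paper leaves implicit, and your interpolation and limit-passage discussion are sound.
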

\begin{proof} The proof follows \cite[Lemma~8.5]{KuMi2018}. We notice that it is enough to prove the claim for energy solutions to problems with smooth data and then pass to the limit according to the definition of SOLA, see Step 0 in the proof of Lemma~\ref{lem-8.4}. Suppose we are in this regime: $\teu$ is a weak solution and $\temu$ is smooth. In such situation we can make use of Lemmas~\ref{lem8.1}, \ref{lem8.2}, and \ref{lem8.3} to get that for every $j=0,1,\dots$ it holds
    \[E_{j+1}\leq\tfrac{1}{4}E_j+cg^{-1}\left(\frac{|\temu|(B^j)}{r_j^{n-1}}\right)\,,\]
for $c=c(\sigma,n,c_{\rm d},\theta)>0$. Iterating this inequality, by induction, and a manipulation, one can infer that
    \[E_{j+1}\leq\frac{\bar c}{4^k}\mean{B_r}|\teDu-(\teDu)_{B_r}|\dx+\bar c \sup_{0<\vr<r} g^{-1}\left(\frac{|\temu|(B_{\vr} (x_0))}{\vr^{n-1}}\right)\,,\]
which implies the claim by a standard interpolation argument presented in detail in the proof of \cite[Lemma~8.5]{KuMi2018} that does not require any change in our case. The constants are $\alpha_V:=[\log(1/4)]/\log\sigma$ and $c_V:=16(1+\bar c)\sigma^{-2n-1}$ if $\tau\in(0,\sigma r)$, whereas $\alpha_V:=-n$ and $c_V:=2$ if $\tau\in(\sigma r,1]$.
\end{proof}

\section{Proofs of main results}\label{sec:main-proofs}

Now, we are in the position to prove our main results Theorem \ref{theo:pointwise} and Theorem \ref{theo:VMOe}.

\begin{proof}[Proof of Theorem ~\ref{theo:VMOe}~(i)]
    The proof is a direct consequence of Lemma~\ref{lem-8.5}. For more details, see Step 1 in the proof of \cite[Theorem~1.2]{KuMi2018}.
\end{proof}

\begin{proof}[Proof of Theorem~\ref{theo:pointwise}] For   $\sigma$ as in~\eqref{sigma},  $H_1,H_2$ as in~\eqref{H1-and-H2} we define
    \begin{align}\label{Psi}
        \Psi(x_0,r) := \frac{64 H_1}{\sigma^n} \mean{B_r(x_0)} |\teDu - (\teDu)_{B_r(x_0)}| \dx + H_2 g^{-1}\left(\cI^{|\temu|}_1 (x_0 ,r)\right)\,.
    \end{align}
    Let us assume that $g^{-1}\left(\cI^{|\temu|}_1 (x_0 ,r)\right) < \infty$ implying, in particular, that $\Psi < \infty$. Moreover, by definition of $\Psi$ we infer that
    \begin{align}\label{8.42}
        g^{-1} \left( \sum_{j=0}^{\infty} \frac{|\temu|(B_{\vr} (x_0))}{\vr^{n-1}} \right) \le  g^{-1} \left(\frac{\sigma^{1-n}}{-\log\sigma} \sum_{j=0}^{\infty} \int_{r_j}^{r_{j-1}}\frac{|\temu|(B_{\vr} (x_0))}{\vr^{n-1}} \frac{{\rm d}\vr}{\vr}\right) \leq \sigma^{-n} \cI^{|\temu|}_1 (x_0 ,r)\leq \frac{\Psi}{H_2 \sigma^n}\,.
    \end{align}
    Thus, by Theorem~\ref{theo:VMOe}, we get
    \begin{align}\label{Psi-vanishes}
        \lim_{\vr \to 0} \left[ \frac{|\temu|(B_{\vr} (x_0))}{\vr^{n-1}} + \cI^{|\temu|}_1 (x_0 ,\vr) \right] = 0 \qquad\text{and}\qquad \lim_{\vr \to 0} \Psi(x_0,\vr) = 0\,.
    \end{align}
    Let us additionally observe that\begin{equation}\label{8.46}
    E_0\leq \frac{2}{\sigma^n}\mean{B_r(x_0)}|\teDu(x_0)-(\teDu)_{B_r(x_0)}|\dx\leq \frac{\Psi}{32H_1}\,.
\end{equation}
    
    \noindent{\bf Case 1. } Suppose $A_0>\Psi/16$. With the aim of showing~\eqref{Riesz-osc-est} we employ Lemma~\ref{lem-8.4} to in the proof that\begin{equation}
        \label{goal-in-case-1} |\teDu(x_0)-(\teDu)_{B_r(x_0)}|\leq \frac{2\Psi}{\sigma^n H_1}\,.
    \end{equation}
    The first step   is to show by induction that\begin{equation}\label{Aj-above-A0}
        A_j>A_0/2\qquad\forall j\geq 0\,.
    \end{equation}It follows the lines of Step~3 of the proof of \cite[Theorem~1.2]{KuMi2018}, so we show only the main arguments.
    
We  notice that\begin{equation}\label{8.47}
    C_0=A_0+H_1E_0\leq 2A_0 \,.
\end{equation}
By~\eqref{8.42} in this case we get \begin{equation*}
    g^{-1} \left( \sum_{j=0}^{\infty} \frac{|\temu|(B_{\vr} (x_0))}{\vr^{n-1}} \right) \leq \frac{\Psi}{H_2 \sigma^n}\leq \frac{16A_0}{H_2 \sigma^n}\,.
\end{equation*}
Therefore, by the triangle inequality, \eqref{8.46}, \eqref{8.47}, and the fact that $|A_1-A_0|\leq \sigma^{-n}E_0$, it follows that \eqref{Aj-above-A0} holds true for $j=1$. This estimate for $j>1$ can be proven by contradiction. Assume that there exists a finite index $J\geq 2$ such that
\[A_J\leq A_0/2\qquad \text{and}\qquad A_j>A_0/2\qquad\forall j\in\{0,\dots,J-1\}\,.\]
Then, precisely as in \cite[(8.52)-(8.53)]{KuMi2018}, with the use of Lemma~\ref{lem-8.4}, one can show that $C_j<8A_0$ for all $j\in\{0,\dots,J-1\}$. By collecting the remarks above and by the same manipulations oas in \cite[Step~3]{KuMi2018}, we get~\eqref{Aj-above-A0}. In turn, we are allowed to apply Lemma~\ref{lem-8.4} with ${k_0}=0$ and every $k$. Therefore, after letting $k\to\infty$, we obtain \[\sum_{j = 0}^{\infty} E_j \le 2 E_0 + \frac{H_2 \sigma^n}{64 H_1} \frac{1}{g'(8A_0)} \sum_{j = 0}^\infty \frac{|\temu|({B^j})}{r_j^{n-1}}\,.\]
Since $g'$ is increasing, we know that in this case 
\begin{align}
\label{eq:C-large2}
    \frac{1}{g'(8A_0)} g \left( \frac{\Psi}{H_2 \sigma^n} \right) \leq \frac{g(\Psi)}{g'(\Psi)} 2^{q-2} (H_2\sigma^n)^{-(p-1)} \leq \frac{2^{q-2}(H_2\sigma^n)^{-(p-1)}}{p-1} \Psi
\end{align}
and we have  \[\sum_{j = 0}^{\infty} E_j \le 2 E_0 + \frac{H_2 \sigma^n}{64 H_1} \frac{1}{g'(8A_0)} g\left(\frac{\Psi}{H_2 \sigma^n}\right)\leq \frac{\Psi}{32 H_1}+\frac{C(p,q)(H_2\sigma^n)^{2-p}}{ H_1}\Psi\leq \frac{\Psi}{H_1}\,\]
for some $C(p,q) > 0$, where the last estimate follows by choosing the constant $C(\data) > 0$ in the definition of $H_2$ large enough depending on $C(p,q)$.
In fact, knowing that for ${k_0}<k$ it holds $|(\teDu)_{B^k}-(\teDu)_{B^{k_0}}|\leq \sigma^{-n}\sum_{j={k_0}}^\infty E_j\leq\sigma^{-n}\Psi/H_1$, the above estimate implies that $\{(\teDu)_{B^j}\}_j$ is a Cauchy sequence. Consequently, $x_0$ is a Lebesgue's point of $\teDu$, that is $\lim_{\vr\to 0}(\teDu)_{B_\vr(x_0)}=\teDu(x_0)$, and
\[|\teDu(x_0)-(\teDu)_{B_{\sigma r}(x_0)}|\leq \sigma^{-n}\sum_{j=0}^\infty E_j\,.\]
Therefore\begin{align*}
    |\teDu(x_0)-(\teDu)_{B_{r}(x_0)}|&\leq |\teDu(x_0)-(\teDu)_{B_{\sigma r}(x_0)}|+|(\teDu)_{B_{\sigma r}(x_0)}-(\teDu)_{B_{r}(x_0)}|\\
    &\leq \sigma^{-n}\sum_{j=0}^\infty E_j+\sigma^{-n}\mean{B_r(x_0)}|\teDu-(\teDu)_{B_{r}(x_0)}|\dx\leq \frac{2\Psi}{\sigma^n H_1}\,,
\end{align*}
which is~\eqref{goal-in-case-1}.
    
    \noindent{\bf Case 2. } Suppose $A_0\leq\Psi/16$ for $\Psi$ defined in~\eqref{Psi}. Without loss of generality we can assume that $\Psi>0$. Indeed, otherwise $\teDu$ is constant in $B_r(x_0)$, $A_0=0$, and there is nothing to prove. Note that
    \begin{equation*}
        C_0=A_0+H_1E_0\leq \Psi/16+\Psi/32<\Psi/8\,.
    \end{equation*}
    Arguing by induction and contradiction as in \cite[Step~4]{KuMi2018} with the use of Lemma~\ref{lem-8.4} instead of \cite[Lemma~8.4]{KuMi2018}, we learn that\begin{equation*}
        C_j<\Psi/2\,.
    \end{equation*}
     Let us take $\vr\in(0,\sigma r]$ and $k\geq 0$ being the largest integer for which $r_k=\sigma^{k+1}\geq\vr$. Then $(r_k/\vr)^{-n}\leq (r_k/r_{k+1})^{-n}=\sigma^{-n}$ and
     \begin{align*}
         |(\teDu)_{B_\vr(x_0)}|&\leq A_k+|(\teDu)_{B_\vr(x_0)}-(\teDu)_{B^k}|\leq A_k+\sigma^{-n}\mean{B^k}|\teDu-(\teDu)_{B^k}|\dx\leq C_k\leq \Psi/2\,.
     \end{align*}
     If $\vr\in(\sigma r,1]$, then we make use of the fact that $A_0\leq\Psi$ to deduce  \begin{align*}
         |(\teDu)_{B_\vr(x_0)}|&\leq A_0+|(\teDu)_{B_\vr(x_0)}-(\teDu)_{B^0}|\leq A_0+\sigma^{-n}\mean{B_r(x_0)}|\teDu-(\teDu)_{B_r(x_0)}|\dx\leq \Psi/2\,.
     \end{align*}
     Altogether, we get\begin{equation}
         \sup_{0<\vr<r} |(\teDu)_{B_\vr(x_0)}| \leq \Psi(x_0,r)/2\,.\label{shrink}
     \end{equation}
     
     To justify that $x_0$ is a Lebesgue's point we assume by contradiction that the limit $\lim_{\vr\to 0}(\teDu)_{B_\vr(x_0)}$ does not exist. This means that $|(\teDu)_{B_{\vr}(x_0)}|\leq \Psi(x_0,r)/16$ for every $\vr<\sigma r$. Indeed, otherwise one can show that the limit exists via the arguments from Case~1. Consequently, since we know by~\eqref{Psi-vanishes} that $\Psi(x_0,\vr)\to 0$ as $\vr\to 0$,~\eqref{shrink} implies that $(\teDu)_{B_\vr(x_0)}\to 0$ as $\vr\to 0$ and we get the desired contradiction. Hence, $x_0$ is a Lebesgue's point of $\teDu$.
     
     To prove~\eqref{Riesz-osc-est}, we collect the above information and get
     \[|(\teDu)_{B_{\vr} (x_0 )}-(\teDu)_{B_r (x_0 )} |\leq \sup_{0<\vr<r} |(\teDu)_{B_{\vr}(x_0 )} | + |(\teDu)_{B_r (x_0 )} | \leq \Psi(x_0 , r)\,.\]
     We can pass to the limit with $\vr\to 0$. Since $x_0$ is a Lebesgue's point, \eqref{Riesz-osc-est} follows.

    \noindent{\bf Conclusion. } In both cases above~\eqref{Riesz-osc-est} is proven. Then~\eqref{eq:u-est} follows from the triangle inequality.
\end{proof}

\begin{proof}[Proof of Theorem \ref{theo:VMOe}~(ii)]
        The proof is a direct consequence of \eqref{Riesz-osc-est} in Theorem~\ref{theo:pointwise}. It relies on restricting the radius of a ball centred in $x_0$ twice to make $|\teDu-(\teDu)_{r_\delta}|$ arbitrarily small in the limit as $r_\delta\to 0$. For more details, see \cite{KuMi2018}.
\end{proof}

\begin{appendix}
    
\label{sec-appendix}

\section{Properties of growth functions}

We collect some basic properties of growth functions that we use throughout the course of this article.

\begin{lem}
    \label{lem-g'}
    Suppose $f:[0,\infty)\to[0,\infty)$ is non-decreasing and $\texi,\teeta \in \rnm$, then
    \begin{equation*}
            \frac{1}{3}f\left(\frac{|\texi|+|\teeta|}{12}\right)\leq \int_0^1 f(|\texi+t\teeta|)\dt\leq f(|\texi|+|\teeta|)\,.
    \end{equation*}
\end{lem}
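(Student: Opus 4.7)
The upper estimate is immediate from the triangle inequality: for every $t \in [0,1]$ one has $|\texi + t\teeta| \leq |\texi| + |\teeta|$, so the monotonicity of $f$ gives $f(|\texi + t\teeta|) \leq f(|\texi|+|\teeta|)$, and integration over $[0,1]$ yields the claim.

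\textbf{Lower bound.} Write $a := |\texi|$, $b := |\teeta|$, and $M := a+b$. The degenerate cases $a=0$ or $b=0$ are direct (if $b=0$ the integrand is constant equal to $f(a)$; if $a=0$ the integrand is $f(tb)$, which is bounded below by $f(b/2) \geq f(M/12)$ on $[1/2,1]$). Assume $a,b>0$, and split into two cases according to whether $a$ is comparable to $b$ or dominates it.

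\emph{Case 1: $a \leq 3b$.} Consider the sublevel set
\[
S := \bigl\{t \in [0,1] :\ |\texi + t\teeta| < M/12 \bigr\}.
\]
If $t_1,t_2 \in S$, then the reverse triangle inequality gives $|t_2-t_1| \, b = |(\texi+t_2\teeta)-(\texi+t_1\teeta)| \leq 2 M/12 = M/6$, so $S$ (being convex in $t$) is an interval of length at most $M/(6b) = (a+b)/(6b) \leq 2/3$, where the last step uses $a \leq 3b$. Hence $[0,1]\setminus S$ has measure at least $1/3$, and on this set the monotonicity of $f$ yields $f(|\texi+t\teeta|) \geq f(M/12)$; integration finishes this case.

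\emph{Case 2: $a > 3b$.} The reverse triangle inequality and $t \in [0,1]$ give the uniform lower bound $|\texi + t\teeta| \geq a - tb \geq a - b > 2a/3$ for all $t \in [0,1]$. Moreover $a > 3b$ implies $M/12 = (a+b)/12 < (a + a/3)/12 = a/9 \leq 2a/3$, so $f(|\texi+t\teeta|) \geq f(2a/3) \geq f(M/12)$ pointwise, and the integral is bounded below by $f(M/12) \geq \tfrac{1}{3} f(M/12)$. The main (and only) non-obvious step is the interval-length estimate in Case 1; the constants $1/3$ and $1/12$ are slack enough to absorb both cases without sharpening.
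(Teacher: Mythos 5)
Your proof is correct, and the upper bound is identical to the paper's (triangle inequality plus monotonicity). For the lower bound, however, you take a genuinely different route. Both you and the paper split on the relative sizes of $|\texi|$ and $|\teeta|$ and then produce a set of measure at least $1/3$ on which the integrand is at least $f((|\texi|+|\teeta|)/12)$, but the mechanism differs. The paper uses the threshold $|\texi| \geq \tfrac12 |\teeta|$ and, in each of the two resulting regimes, exhibits an \emph{explicit} third of the unit interval (namely $[0,1/3]$ or $[2/3,1]$) on which a direct triangle-inequality computation gives $|\texi + t\teeta| \geq \tfrac{1}{12}(|\texi|+|\teeta|)$ pointwise. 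You use the threshold $|\texi| \leq 3|\teeta|$: when $|\texi| > 3|\teeta|$ the pointwise bound holds for all $t \in [0,1]$, and in the balanced case you instead control the measure of the sublevel set $S=\{t : |\texi + t\teeta| < (|\texi|+|\teeta|)/12\}$ by noting that any two points of $S$ are within distance $(|\texi|+|\teeta|)/(6|\teeta|) \leq 2/3$, so $|S| \leq 2/3$. This is a clean Lipschitz-type observation and, incidentally, the parenthetical appeal to convexity of $t \mapsto |\texi+t\teeta|$ is superfluous: in $\mathbb{R}$ the Lebesgue measure of any set is bounded by its diameter. Two cosmetic remarks: in Case~1 the inequality $|t_2-t_1|\,b \leq 2M/12$ should be strict (you may keep $\leq$ after passing to the sup), and the step invoked there is the ordinary triangle inequality $|x-y| \leq |x|+|y|$ rather than the reverse triangle inequality. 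Neither affects correctness; both proofs are of comparable length, with the paper's being slightly more hands-on and yours slightly more structural.
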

\begin{proof}
    The upper bound follows from the triangle inequality. For the lower bound, we observe the following pointwise estimates:
    \begin{align*}
        \begin{cases}
            |\texi + t \teeta| \ge |\texi| - t |\teeta| \ge |\texi| - \frac{1}{3}|\teeta| \ge \frac{1}{12} (|\texi| + |\teeta|)\,, \quad\quad\quad \ \ |\texi| \ge \frac{1}{2}|\teeta|\,,~~ t \le \frac{1}{3}\,,\\
            |\texi + t \teeta| \ge t|\teeta| - |\texi| \ge \frac{1}{6}|\teeta| \ge \frac{1}{12}(|\texi| + |\teeta|)\,, \qquad \qquad \qquad |\texi| \le \frac{1}{2}|\teeta|\,,~~  t \ge \frac{2}{3}\,.
        \end{cases}
    \end{align*}
    Therefore, we see that
    \begin{align*}
    \int_0^1 f(|\texi+t\teeta|)\dt \geq
        \begin{cases}
            \int_{0}^{1/3}  f\left(\frac{|\texi|+|\teeta|}{12}\right)\dt\,, \quad |\texi|\geq \frac{1}{2}|\teeta|\\
            \int_{2/3}^1  f\left(\frac{|\texi|+|\teeta|}{12}\right)\dt\,, \quad |\texi|\leq \frac{1}{2}|\teeta|
        \end{cases}
        \geq \frac{1}{3} f\left(\frac{|\texi|+|\teeta|}{12}\right)\,.
    \end{align*}
\end{proof}

\begin{lem}
    \label{lem-G}
    For $g$ be such that \eqref{a:p-1} holds true. Then, $t^2 {g(1+t)}/{(1+t)}\approx t^2+G(t)$ for all ${t\geq 0}$.
\end{lem}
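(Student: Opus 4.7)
The plan is to split into the two regimes $t \le 1$ and $t \ge 1$ and exploit the fact that~\eqref{a:p-1}, combined with the relation $g = G'$, forces both $g(t)/t^{p-1}$ to be non-decreasing and $g(t)/t^{q-1}$ to be non-increasing, and likewise (by integration-by-parts) $G(t)/t^{p}$ to be non-decreasing and $G(t)/t^{q}$ to be non-increasing. Indeed, writing $\int_{0}^{t} s g'(s)\,ds = tg(t) - G(t)$ and applying~\eqref{a:p-1} gives $(p-1)G(t) \le tg(t) - G(t) \le (q-1)G(t)$, so that in particular
\begin{equation*}
   G(t) \approx t\, g(t) \qquad \text{for every } t \ge 0,
\end{equation*}
with constants depending only on $p$ and $q$. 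This is the one preparatory fact that the two cases both lean on.

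For $t \le 1$, observe that $1 \le 1+t \le 2$, so by the monotonicity of $g$ (a consequence of~\eqref{a:p-1}) one has $g(1) \le g(1+t) \le g(2) \le 2^{q-1} g(1)$; hence $g(1+t)/(1+t) \approx g(1)$ and the left-hand side satisfies $t^{2} g(1+t)/(1+t) \approx t^{2}$. On the right-hand side, since $G(t)/t^{p}$ is non-decreasing and $p > 2$, one has $G(t) \le G(1) t^{p} \le G(1) t^{2}$ for all $t \in [0,1]$, so that $t^{2} + G(t) \approx t^{2}$. Thus both sides are equivalent to $t^{2}$ (with constants depending only on $p,q,g(1),G(1)$).

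For $t \ge 1$, the range $t \le 1+t \le 2t$ together with the monotonicity of $g$ yields $g(t) \le g(1+t) \le g(2t) \le 2^{q-1} g(t)$; dividing by $1+t \approx t$ gives $g(1+t)/(1+t) \approx g(t)/t$, so the left-hand side becomes $t^{2} g(1+t)/(1+t) \approx t g(t) \approx G(t)$ by the equivalence established in the first paragraph. For the right-hand side, the monotonicity of $G(t)/t^{p}$ and $p>2$ yield $G(t) \ge G(1) t^{p} \ge G(1) t^{2}$ for $t \ge 1$, so $t^{2} + G(t) \approx G(t)$. The two sides are again comparable.

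No step is really an obstacle; the only point needing care is book-keeping of the constants in the transition at $t = 1$ (which is why the prefactor $g(1+t)$ is convenient: it is bounded away from $0$ and $\infty$ precisely in the delicate regime $t \le 1$ and mimics $g(t)$ when $t \ge 1$), together with the repeated use of the doubling-type bounds $g(2s) \le 2^{q-1} g(s)$ and $G(2s) \le 2^{q} G(s)$ that follow from~\eqref{a:p-1}.
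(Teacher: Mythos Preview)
Your proof is correct. The paper states this lemma without proof, so there is nothing to compare against; your argument---splitting into the regimes $t\le 1$ and $t\ge 1$, using the monotonicity of $g(t)/t^{p-1}$ and $G(t)/t^{p}$ derived from~\eqref{a:p-1} together with $p>2$---is the natural and complete way to establish the claim.
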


\begin{lem}
\label{lem-g''}
Let $g$ be such that \eqref{a:p-1} and \eqref{a:p-2} hold true. Then, for every $\texi,\teeta \in \rnm$ it holds
    \begin{align*}
    \int_0^1 \int_{0}^{1} g''(|\texi + \tau s (\teeta - \texi)|) \dtau \ds \leq c g''(|\texi|+|\teeta|)\,,
\end{align*}
where $c > 0$ depends only on $p,q$.
\end{lem}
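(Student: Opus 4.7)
The plan is to reduce the double integral to a one-dimensional segment integral of a negative power, using the almost-power-law behavior of $g''$ encoded in \eqref{a:p-1} and \eqref{a:p-2}. Set $T := |\texi|+|\teeta|$ and note that $|\texi + \tau s(\teeta - \texi)| \leq T$ for all $(\tau, s) \in [0,1]^2$. As a preliminary step I would derive the doubling estimate
\begin{equation*}
g''(t) \leq \tfrac{q-2}{p-2}\, (T/t)^{3-p}\, g''(T) \qquad \text{for } 0 < t \leq T,
\end{equation*}
by first integrating the differential bound $g''(s)/g'(s) \in [(p-2)/s, (q-2)/s]$ from \eqref{a:p-2} between $t$ and $T$ to obtain $g'(T)/g'(t) \in [(T/t)^{p-2}, (T/t)^{q-2}]$, and then using the two-sided relation $g''(\cdot) \approx g'(\cdot)/(\cdot)$ from \eqref{a:p-2} at $t$ and $T$.

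If $p \geq 3$, then $(T/t)^{3-p} \leq 1$ for $t \leq T$, and the above immediately yields $g''(t) \leq c(p,q)\, g''(T)$ uniformly on $(0, T]$, so the lemma follows trivially by integration over $[0,1]^2$. If instead $2 < p < 3$, inserting the doubling estimate into the double integral reduces the claim to
\begin{equation*}
\int_0^1 \int_0^1 |\texi + \tau s(\teeta-\texi)|^{p-3} \dtau \ds \leq c(p)\, T^{p-3}.
\end{equation*}
For this, I would freeze $s \in [0,1]$, rewrite the inner integrand as $|(1-\tau)\texi + \tau\, (\texi + s(\teeta-\texi))|^{p-3}$, and then invoke the one-dimensional segment estimate
\begin{equation*}
\int_0^1 |(1-\tau)\tea + \tau \teh|^{\alpha} \dtau \leq c_\alpha\, (|\tea|+|\teh|)^{\alpha}, \qquad \alpha \in (-1, 0],
\end{equation*}
with $\tea := \texi$ and $\teh := \texi + s(\teeta-\texi)$; since $|\teh| \leq T$, this controls the inner integral uniformly in $s$ by $c(p)\, T^{p-3}$, and integration in $s$ closes the reduction.

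The main technical obstacle is the segment estimate above for $\alpha \in (-1, 0]$, since the integrand may blow up when the segment passes near the origin. I would prove it by parametrizing $|(1-\tau)\tea + \tau\teh|^2 = d^2 + L^2(\tau - \tau^*)^2$, with $d$ the distance from the origin to the affine line through $\tea$ and $\teh$, $L := |\teh - \tea|$, and $\tau^* \in \R$ the parameter of the foot of the perpendicular. Writing $\bar T := |\tea|+|\teh|$, I would then split according to whether $L \geq \bar T/4$ or $L < \bar T/4$: in the first regime, the substitution $w = L(\tau-\tau^*)/d$ converts the integral into $(d^{\alpha+1}/L) \int (1+w^2)^{\alpha/2}\, dw$ on a bounded interval of length $\lesssim \bar T/d$, which is controlled using $(1+w^2)^{\alpha/2} \leq \min\{1, |w|^\alpha\}$, $d \leq \bar T/2$, and the integrability $\int_0^M |w|^\alpha \, dw = M^{\alpha+1}/(\alpha+1)$ valid for $\alpha > -1$; in the second regime, the triangle inequality forces $|(1-\tau)\tea + \tau\teh| \geq \bar T/4$ uniformly on $[0,1]$, so the bound follows at once from $|(1-\tau)\tea + \tau\teh|^\alpha \leq (\bar T/4)^\alpha$.
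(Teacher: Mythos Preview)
Your reduction to the power case via the doubling bound $g''(t)\leq c\,(T/t)^{3-p}g''(T)$ is correct, and the segment estimate
\[
\int_0^1 |(1-\tau)\tea+\tau\teh|^{\alpha}\,d\tau\leq c_\alpha\,(|\tea|+|\teh|)^{\alpha},\qquad \alpha\in(-1,0],
\]
is a sound lemma with a valid proof sketch. The gap is in how you use it. After applying the segment estimate with $\tea=\texi$ and $\teh=\texi+s(\teeta-\texi)$ you obtain the bound $c\,(|\tea|+|\teh|)^{p-3}$, and then you write ``since $|\teh|\leq T$, this controls the inner integral uniformly in $s$ by $c(p)\,T^{p-3}$''. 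But $p-3<0$, so $(|\tea|+|\teh|)^{p-3}$ is \emph{decreasing} in $|\tea|+|\teh|$; the inequality $|\teh|\leq T$ gives only a lower bound. Concretely, take $\texi=0$: then $\tea=0$, $\teh=s\teeta$, and the segment bound is $c\,(s|\teeta|)^{p-3}$, which blows up as $s\to 0$. So the inner integral is \emph{not} uniformly bounded in $s$.

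The fix is immediate and stays within your framework: after the first application you have
\[
\int_0^1 |\texi+\tau s(\teeta-\texi)|^{p-3}\,d\tau\leq c\,\big(|\texi|+|\texi+s(\teeta-\texi)|\big)^{p-3}\leq c\,|(1-s)\texi+s\teeta|^{p-3},
\]
using that $|\texi|+|\texi+s(\teeta-\texi)|\geq |(1-s)\texi+s\teeta|$ and $p-3<0$. Now apply the segment estimate a \emph{second} time, in the variable $s$, with endpoints $\texi$ and $\teeta$, to get $\int_0^1 |(1-s)\texi+s\teeta|^{p-3}\,ds\leq c\,T^{p-3}$. This closes the argument.

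For comparison, the paper's proof takes a different route: instead of reducing to a power, it works directly with $g'$ via the difference-quotient bound $|g'(a)-g'(b)|\leq c\,g''(a+b)\,|a-b|$, and then evaluates the $\tau$- and $s$-integrals by antidifferentiation rather than by a geometric segment lemma. Your approach is more elementary (it isolates the power behavior) but requires the two-pass application above; the paper's approach is slicker but relies on the algebraic identity and the monotonicity of $g'$.
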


\begin{proof}
    We start by observing that there exists a constant $c > 0$ depending only on $p,q$ such that
 \begin{align}
 \label{eq:aux-lemma-claim}
        \frac{|g'(a) - g'(b)|}{g''(a+b)|a - b|} \le c
    \end{align}
    for any $a,b > 0$. Without loss of generality,  let us assume that $a < b$ and write $b = \kappa a$ for some $\kappa > 1$. Then, we can write using \eqref{a:p-2} and the monotonicity of $g'$:
    \begin{align*}
        \frac{|g'(a) - g'(b)|}{g''(a+b)|a - b|} = \frac{|g'(a) - g'(\kappa a)|}{g''((1  + \kappa) a)(\kappa - 1) a} \le  (p-2)^{-1} \frac{1+\kappa}{\kappa - 1} \frac{|g'(a) - g'(\kappa a)|}{g'(\kappa a)}\,.
    \end{align*}
    Let us first assume that $2 \le \kappa$. In that case,
    \begin{align*}
        \frac{|g'(a) - g'(b)|}{g''(a+b)|a - b|} \le (p-2)^{-1} \frac{1+\kappa}{\kappa - 1} \frac{g'(\kappa a)}{g'((1+\kappa)a)} \le \frac{3}{2}(p-2)^{-1}\,.
    \end{align*}
    In case $1 \le \kappa \le 2$, we obtain
    \begin{align*}
        \frac{|g'(a) - g'(b)|}{g''(a+b)|a - b|} \le (p-2)^{-1} \frac{\kappa^{q-2} - 1}{\kappa - 1} \frac{(1+\kappa) g'(a)}{g'((1+\kappa)a)} \le 3(p-2)^{-1}c(q)\,,
    \end{align*}
    where $\sup_{\kappa \le [1,2]} \frac{\kappa^{q-2} - 1}{\kappa - 1} =: c(q) > 0$ depends only on $q$. This proves the claim \eqref{eq:aux-lemma-claim}.

    In order to explain how \eqref{eq:aux-lemma-claim} implies the desired result, we compute
    \begin{align*}
        \int_0^1 \int_{0}^{1} g''(|\texi + \tau s (\teeta - \texi)|) \dtau \ds
        &= \int_0^1 \frac{1}{s|\teeta - \texi|} \left[ g'(|\texi + s(\teeta-\texi)|) - g'(|\texi|) \right] \ds\\
        &\le \int_0^1 \frac{1}{||\texi +s(\teeta - \texi)| - |\texi||} \left[ g'(|\texi + s(\teeta-\texi)|) - g'(|\texi|) \right] \ds\,.
    \end{align*}
    Note that upon application of \eqref{eq:aux-lemma-claim} with $a = |\texi + s(\teeta-\texi)|$ and $b = |\texi|$, we obtain
    \begin{align*}
        \int_0^1 \int_{0}^{1} g''(|\texi + \tau s (\teeta - \texi)|) \dtau \ds &\le c \int_0^1 g''(|\texi + s(\teeta-\texi)| + |\texi|) \ds \\
        &\le c \frac{|g'(|\teeta| + |\texi|) - g'(2|\texi|)|}{|\teeta - \texi|}\le c \frac{|g'(|\teeta| + |\texi|) - g'(2|\texi|)|}{|(|\teeta| + |\texi|) - 2|\texi||}\,.
    \end{align*}
    By application of \eqref{eq:aux-lemma-claim} with $a = |\teeta| + |\texi|$ and $b = 2|\texi|$, we obtain
    \begin{align*}
        \int_0^1 \int_{0}^{1} g''(|\texi + \tau s (\teeta - \texi)|) \dtau \ds &\le c g''(|\teeta| + 3|\texi|)\le c \frac{g'(|\teeta|+3|\texi|)}{|\teeta| + 3|\texi|}\le c \frac{g'(3(|\teeta|+|\texi|))}{|\teeta|+|\texi|}\le c g''(|\teeta|+|\texi|)\,,
    \end{align*}
    where we also used monotonicity of $g'$ and \eqref{a:p-2}. This proves the desired result.
    \end{proof}

\section{Function spaces}\label{ssec:fnsp}
Before presenting the definitions of Lorentz, Morrey, and Lorentz--Morrey spaces,  we recall the definitions of decreasing rearrangement and maximal rearrangement of a function (see ~\cite{BeSa88}). The decreasing rearrangement $f^* : [0, \infty) \to [0, \infty]$ of a measurable function $f:\rn\to\R$ is given by
\begin{equation*}
f^\ast(t) := \sup\Big \{ s \geq 0 \colon \big|\{x\in \R^n:|f(x)|>s\}\big| > t  \Big\}\,,
\end{equation*}
where we assume that $\sup\emptyset=0$. Equivalently it can also be defined as the function, which is right-continuous, non-increasing, and equimeasurable with $f$, i.e., $|\{x : |f(x)| > t\}| = |\{x: f^{*}(x) > t\}|$ for all $t > 0$.
 The maximal rearrangement is defined by
\begin{equation*}
f^{**}(t): =\frac 1t \int_0^t f^\ast(s) \, ds\quad\text{and}\quad 
f^{\ast \ast}(0)=
f^{\ast}(0)\,.
\end{equation*}

\noindent{}Let $0 < r,s \leq \infty$. The Lorentz space $\Lambda^{r,s}(\Omega)$ is the space of measurable functions $f : \Omega \to \R$ such that
\begin{equation*}
\|f\|_{{\Lambda^{r,s}}(\Omega)}:= 
\big\|(\cdot)^{\frac 1r- \frac 1s}f^{**}(\cdot)\big\|_{L^s(0, |\Omega|)}
< \infty\,.
\end{equation*}
 For $s=\infty$ this space is usually called the Marcinkiewicz space. 

\noindent We say that  a measurable function $f : \Omega\to\rn$ belongs to the Morrey space $L^{r}_{\vt}(\Omega)$ for $r\geq 1$ and $\vt\in[0,n]$, 
if and only if \[\|f\|_{L^{r}_{\vt}(\Omega)}:= \sup\limits_{\substack{
B(x_0,R) \subset\rn\\ x_0\in\Omega}} R^{\frac{\vt-n}{r}} \|f\|_{L^r(B_R\cap \Omega)} <\infty\,.\]
 We say that $f : \Omega \to \rn$ belongs to the Lorentz--Morrey space $\Lambda^{r, s}_{\vt}(\Omega)$ for $r,s\in[1,\infty]$ and $\vt\in[0,n]$,
if and only if
\[\|f\|_{\Lambda^{r,s}_{\vt}(\Omega)}:=\sup\limits_{\substack{
B(x_0,R) \subset\rn\\ x_0\in\Omega}}  R^\frac{\vt-n}{r}\|f\|_{\Lambda^{r,s}(B_R\cap \Omega)}<\infty\,.\]

\noindent We define the space $L\log  L(\Omega)$ as a subset of integrable functions $f:\Omega\to\R$ such that 
\[\begin{split}\|f\|_{L \log  L(\Omega)}&=\inf\left\{\lambda>0:\quad\int_\Omega\Big|\tfrac{f}{\lambda}\Big|\log\left(e +\Big|\tfrac{f}{\lambda}\Big| \right)dx\leq 1\right\} <\infty\,.\end{split}\]
\end{appendix}
\noindent{\bf Acknowledgements} I. Chlebicka is supported by NCN grant no. 2019/34/E/ST1/00120. M. Kim is supported by the research fund of Hanyang University (HY-202300000001143) and the National Research Foundation of Korea (NRF) grant funded by the Korea government (MSIT) (RS-2023-00252297). M. Weidner is supported by the European Research Council (ERC) under the Grant Agreement No 801867 and the AEI project PID2021-125021NA-I00 (Spain). 
The project started with discussions of all authors during Thematic Research Programme Anisotropic and Inhomogeneous Phenomena funded by The Initiative of Excellence at University of Warsaw in 2022.

\def\ocirc#1{\ifmmode\setbox0=\hbox{$#1$}\dimen0=\ht0 \advance\dimen0
  by1pt\rlap{\hbox to\wd0{\hss\raise\dimen0
  \hbox{\hskip.2em$\scriptscriptstyle\circ$}\hss}}#1\else {\accent"17 #1}\fi}
  \def\cprime{$'$} \def\ocirc#1{\ifmmode\setbox0=\hbox{$#1$}\dimen0=\ht0
  \advance\dimen0 by1pt\rlap{\hbox to\wd0{\hss\raise\dimen0
  \hbox{\hskip.2em$\scriptscriptstyle\circ$}\hss}}#1\else {\accent"17 #1}\fi}
  \def\ocirc#1{\ifmmode\setbox0=\hbox{$#1$}\dimen0=\ht0 \advance\dimen0
  by1pt\rlap{\hbox to\wd0{\hss\raise\dimen0
  \hbox{\hskip.2em$\scriptscriptstyle\circ$}\hss}}#1\else {\accent"17 #1}\fi}
  \def\cprime{$'$}

\end{document}